\setlist[enumerate,1]{wide, labelindent=0pt,label={\upshape(\roman*)}}
\newlength{\bibitemsep}\setlength{\bibitemsep}{.10\baselineskip plus .05\baselineskip minus .05\baselineskip}
\newlength{\bibparskip}\setlength{\bibparskip}{0pt}
\let\oldthebibliography\thebibliography
\renewcommand\thebibliography[1]{%
 \oldthebibliography{#1}%
 \setlength{\parskip}{\bibitemsep}%
 \setlength{\itemsep}{\bibparskip}%
}
\newtheorem{theorem}{Theorem}[section]
\newtheorem{definition}[theorem]{Definition}
\newtheorem{proposition}[theorem]{Proposition}
\newtheorem{corollary}[theorem]{Corollary}
\newtheorem*{theorem*}{Theorem}
\newtheorem{lemma}[theorem]{Lemma}
\newtheorem{remark}[theorem]{Remark}
\newtheorem{example}[theorem]{Example}
\newtheorem{examples}[theorem]{Examples}
\newtheorem{foo}[theorem]{Remarks}
\newtheorem{conjecture}[theorem]{Open question}
\newtheorem{open question}[theorem]{Open Question}
\newtheorem{c/p}[theorem]{Conjecture/Proposition}
\newcommand{\dom}{\operatorname{dom}}
\def\vint{\mathop{\mathchoice%
 {\setbox0\hbox{$\displaystyle\intop$}\kern 0.22\wd0%
 \vcenter{\hrule width 0.6\wd0}\kern -0.82\wd0}%
 {\setbox0\hbox{$\textstyle\intop$}\kern 0.2\wd0%
 \vcenter{\hrule width 0.6\wd0}\kern -0.8\wd0}%
 {\setbox0\hbox{$\scriptstyle\intop$}\kern 0.2\wd0%
 \vcenter{\hrule width 0.6\wd0}\kern -0.8\wd0}%
 {\setbox0\hbox{$\scriptscriptstyle\intop$}\kern 0.2\wd0%
 \vcenter{\hrule width 0.6\wd0}\kern -0.8\wd0}}%
 \mathopen{}\int}
\newcommand{\eng}{\mathcal{E}}
\title{Heat kernel gradient estimates  for the Vicsek set}
\author{Fabrice Baudoin\footnote{F.B. was partly funded by the NSF grant DMS-2247117 when most of this research was conducted.}, Li Chen\footnote{L.C. is partly funded by the Simons Foundation Travel Support for Mathematicians Grant \#853249.}}
\begin{document}

\maketitle

\begin{abstract}
   We prove pointwise and $L^p$ gradient estimates for the heat kernel on the bounded and unbounded Vicsek set and applications to Sobolev inequalities are given. We also define a Hodge semigroup in that setting and prove estimates for its kernel.
\end{abstract}
\tableofcontents

\section{Introduction}

Heat kernel gradient estimates have proven to be a  powerful and versatile tool in many different situations. For instance, they play an important role in the abstract Bakry-\'Emery theory of curvature bounds and functional inequalities, see \cite{BGL}. They can also be used in harmonic analysis to prove boundedness of Riesz transforms, see \cite{ACDH,ChenCoulhon,RieszSubelliptic} and in Riemannian and sub-Riemannian geometry to prove Sobolev and isoperimetric inequalities, see \cite{Baudoinsurvey, BaudoinKim, varopoulos , Ledouximproved}. On a complete Riemannian manifold, if we denote by $p_t$ the heat kernel, $d$ the Riemannian distance, and $\mu(B(x,\sqrt{t}))$ the Riemannian volume measure of a geodesic ball with center $x$ and radius $\sqrt{t}$, then  the Gaussian heat kernel estimates
\begin{align}\label{HKEintro}
 \frac{ c_1 }{\mu(B(x,\sqrt t))} \exp \left(-c_2\frac{d(x,y)^2}{t}\right) \le p_t(x,y) \le \frac{ c_3 }{\mu(B(x,\sqrt t))} \exp \left(-c_4\frac{d(x,y)^2}{t}\right)
\end{align}
are equivalent to the combination of the volume doubling property and the 2-Poincar\'e inequality, see the celebrated works \cite{grigodoubling,saloff}. However the matching gradient estimate
\begin{align}\label{GBintro}
| \nabla_x p_t (x,y) |\le  \frac{ C }{\sqrt{t}\mu(B(x,\sqrt t))} \exp \left(-c\frac{d(x,y)^2}{t} \right)
\end{align}
does not always follow from \eqref{HKEintro} and requires different assumptions on the manifold, like the Ricci curvature being non-negative, see \cite{LiYau}. 
The heat kernel Gaussian estimates \eqref{HKEintro} and corresponding gradient estimates \eqref{GBintro} can more generally be studied in metric measure spaces and Dirichlet spaces equipped with a carr\'e du champ operator $\Gamma(f)$ which is used as a substitute for $|\nabla f |^2$, see for instance \cite{gradientCoulhon} and references therein.

\begin{figure}[htb]\centering
\includegraphics[trim={60 90 180 200},clip,height=0.15\textwidth]{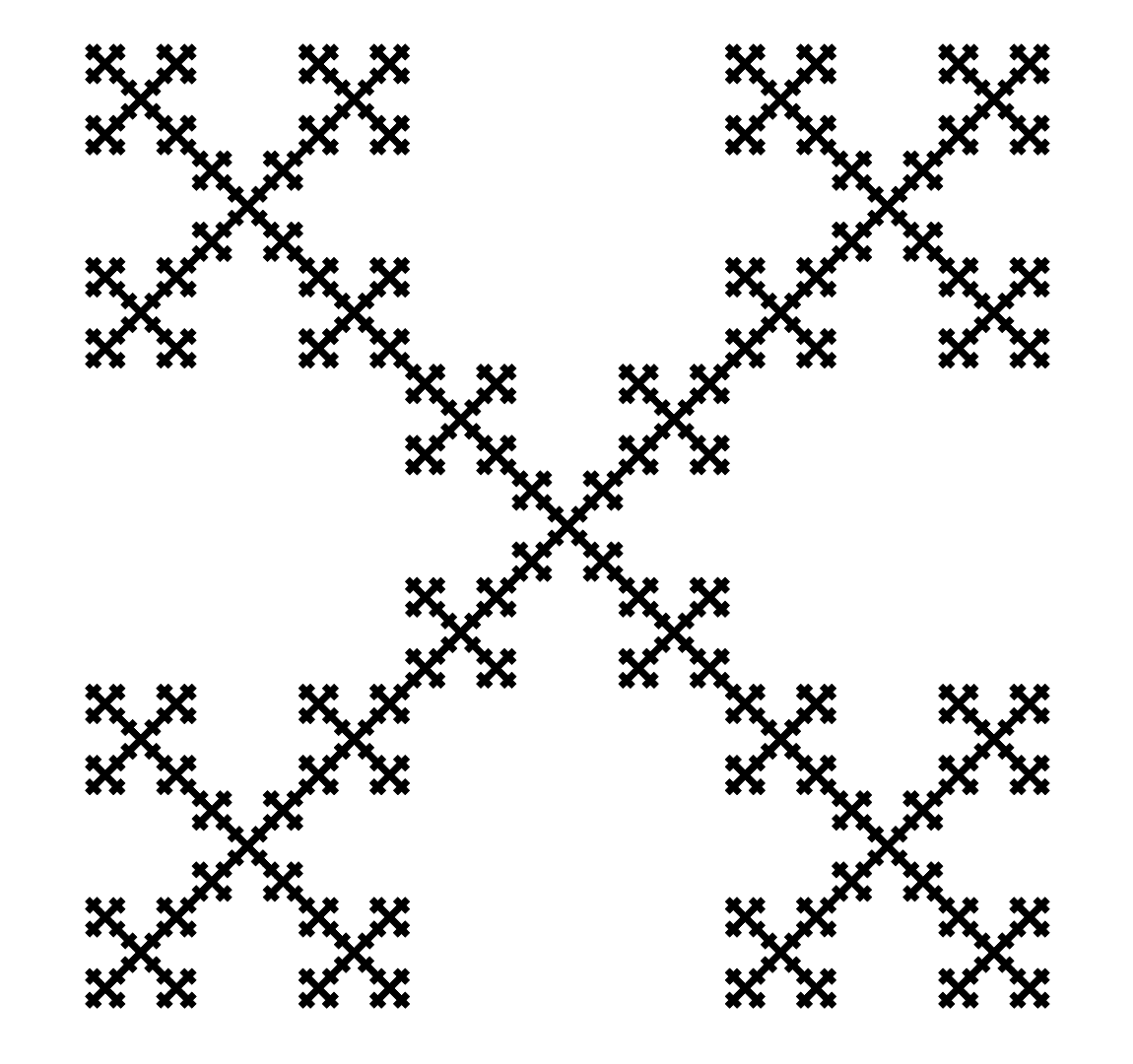}
\caption{A part of an infinite, or unbounded, Vicsek set.} 
\label{fig-Vicsek}
\end{figure}

In a recent work  \cite{devyver}, heat kernel gradient estimates have been obtained on some fractal-like cable systems for which the heat kernel has Gaussian estimates in small time and sub-Gaussian estimates in large time. In the present paper, we are interested in proving heat kernel gradient  estimates in a metric measure space $(X,d,\mu)$, the unbounded Vicsek set (Figure \ref{fig-Vicsek}), for which the heat kernel has the following  sub-Gaussian estimates for every $t>0$:
\[
\frac{c_1}{t^{d_h/d_w}}\exp\left(-c_2\Bigl(\frac{d(x,y)^{d_{w}}}{t}\Bigr)^{\frac{1}{d_{w}-1}}\right) \le p_t(x,y) \le \frac{c_3}{t^{d_h/d_w}}\exp\left(-c_4\Bigl(\frac{d(x,y)^{d_{w}}}{t}\Bigr)^{\frac{1}{d_{w}-1}}\right).
\]
Here, the parameter $d_h=\frac{\log 5}{\log 3}$ is the Hausdorff dimension of the metric space $(X,d)$ with  the Hausdorff measure $\mu$, and $d_w=d_h+1>2$ is a parameter which is called the walk dimension. Our interest in the Vicsek set comes from the fact that it provides a simple example of a space with sub-Gaussian heat kernel estimates and which does not admit a carr\'e du champ operator. In such settings, heat kernel gradient estimates have never been obtained before and we stress that  for the Vicsek set such estimates do not easily follow from the estimates obtained in \cite{devyver} on the cable systems approximations.

A first key point is to understand what a gradient estimate means in that setting. Despite the lack of a carr\'e du champ operator, we recently used in \cite{BC} a notion of weak gradient $\partial$  similar to the one appearing in  \cite{AEW}. The key property that allows to define this gradient is that the Vicsek set is a metric tree. Intuitively,  the gradient of a function is defined on a dense but zero Hausdorff measure subset of the Vicsek set, the so-called skeleton $\mathcal S$, through the fundamental theorem of calculus:
\[
f(x)-f(0)=\int_\gamma \partial f,
\]
where $0$ is the root of the Vicsek set and $\gamma$ the unique geodesic between $0$ and $x$. The function $\partial f$ can then be seen as a function on $\mathcal S$ which is locally integrable with respect to the length measure $\nu$ on $\mathcal S$. It is worth noting, and at the heart of the many difficulties arising in our analysis, that due to the fractal nature of the Vicsek set, the measure $\nu$  is not a Radon measure and is singular with respect to the measure $\mu$. More precisely, $\nu$ is supported on one-dimensional sets while $\mu$ which is the Hausdorff measure is supported on $d_h$-dimensional sets. 

A first main result of the paper is  the following theorem, see Corollary \ref{gb heat 1}.

\begin{theorem}\label{GEVintro}
 In the unbounded Vicsek set, the heat kernel $p_t(x,y)$ satisfies the following gradient estimate: for every $x\in X$ and $\nu$ a.e. $y \in \mathcal S$
    \begin{align}\label{GBVicsek1}
 | \partial_y p_t(x,y)| \le \frac{C}{t^{1/d_w}}  p_{c t}(x,y), \quad t >0.
 \end{align}
\end{theorem}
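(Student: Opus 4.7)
The plan is to exploit the metric-tree structure of the Vicsek set to rewrite the spatial gradient at a point as a temporal derivative of a flux, and then combine this with a pointwise time-derivative estimate for the heat kernel together with the sub-Gaussian tail. More precisely, for any interior edge point $y \in \mathcal{S}$ the complement $X \setminus \{y\}$ has two connected components; let $T_y$ denote the one in the direction along which $\partial$ differentiates (away from the root along the geodesic through $y$). Testing the heat equation $\partial_t p_t(x, \cdot) = \Delta p_t(x, \cdot)$ against smooth cutoffs approximating $\mathbf{1}_{T_y}$ and invoking the Dirichlet-form ``divergence theorem'' from \cite{BC} should yield the flux identity
$$\partial_y p_t(x, y) \;=\; -\,\partial_t P_t \mathbf{1}_{T_y}(x) \;=\; \partial_t P_t \mathbf{1}_{T_y^c}(x).$$

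Next I would use the pointwise time-derivative estimate $|\partial_t p_t(x, z)| \le (C/t)\, p_{c t}(x, z)$, which on a space with sub-Gaussian heat kernel bounds follows by a standard argument from analyticity of the heat semigroup on $L^\infty$ and Cauchy's integral formula. Integrating in $z$ over $A \in \{T_y, T_y^c\}$ gives $|\partial_t P_t \mathbf{1}_A(x)| \le (C/t)\, P_{c t} \mathbf{1}_A(x)$. I would then take $A$ to be the component \emph{not} containing $x$, so that the tree structure forces $d(x, z) \ge d(x, y)$ for every $z \in A$; the sub-Gaussian upper bound together with the volume estimate $\mu(B(x, r)) \asymp r^{d_h}$ gives, by a standard Gaussian-tail integration against the Hausdorff measure,
$$P_{c t} \mathbf{1}_A(x) \le C \exp\!\left(-c' \Bigl(\frac{d(x,y)^{d_w}}{t}\Bigr)^{1/(d_w-1)}\right).$$
Combining these bounds, and using the key identity $(1+d_h)/d_w = 1$ to rewrite $1/t = t^{-1/d_w} \cdot t^{-d_h/d_w}$, I would match the surviving Gaussian factor against the sub-Gaussian \emph{lower} bound on $p_{c t}(x,y)$ to obtain $|\partial_y p_t(x, y)| \le (C/t^{1/d_w})\, p_{c t}(x, y)$ after enlarging $c$ if necessary.

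The main obstacle is the flux identity of the first step. The weak gradient $\partial$ is defined only $\nu$-a.e. on the one-dimensional skeleton, whereas the reference measure $\mu$ is $d_h$-dimensional and, as the introduction emphasizes, singular with respect to $\nu$; this singularity is precisely what prevents a carr\'e du champ and the classical route to gradient bounds. One must therefore justify, within the Dirichlet-form framework of \cite{BC}, that indicator functions of subtrees can be approximated by smooth cutoffs whose gradients in the $\nu$-sense concentrate as a Dirac mass at $y$, while the $\mu$-side of the integration still produces $\partial_t P_t \mathbf{1}_{T_y}$ in the limit. Once this identity is secured, the remaining two steps---the pointwise time-derivative bound and the Gaussian-tail integration---are comparatively soft given the sub-Gaussian heat kernel bounds already recorded in the introduction.
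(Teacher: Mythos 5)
Your strategy is genuinely different from the paper's and, in outline, it works. The paper does not use the tree/flux structure at all: it first proves the two-point Lipschitz estimate $|p_t(z,x)-p_t(z,y)|\le C\,d(x,y)\,t^{-1/d_w}(p_{ct}(z,x)+p_{ct}(z,y))$ (Theorem \ref{Lipschitz pt}) by transferring a known Lipschitz bound for the \emph{resolvent} kernel $g_\lambda$ from \cite{Barlow} to the heat kernel via $f=p_t(z,\cdot)$, the time-derivative bound of Lemma \ref{sub_time}, and an optimization in $\lambda$; the gradient bound of Corollary \ref{gb heat 1} then follows by restricting to edges and applying Lebesgue differentiation. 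Your route instead converts the spatial derivative at $y$ into the time derivative of the heat content of the subtree $T_y$ beyond $y$, and then only uses Lemma \ref{sub_time}, the tail integration $\int_{X\setminus B(x,r)}\exp(-c(d(x,z)^{d_w}/t)^{1/(d_w-1)})\,d\mu(z)\le Ct^{d_h/d_w}\exp(-\tfrac{c}{2}(r^{d_w}/t)^{1/(d_w-1)})$ (exactly the estimate invoked in the paper via \cite{Chen}), the identity $d_w=d_h+1$, and the lower bound in \eqref{eq:sub-Gaussian1}. Your argument is more structural and elementary (no resolvent calculus), but it yields only the $\nu$-a.e.\ gradient bound directly, whereas the paper's Theorem \ref{Lipschitz pt} is a strictly stronger two-point statement for \emph{all} $x,y,z$ that is reused elsewhere (e.g.\ in Corollary \ref{LipP_t} and Remark \ref{remark median}); you would have to recover it by integrating your bound along geodesics together with \eqref{eq:boundpolyptbypct}.

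The one step you must actually carry out is the flux identity, and you have correctly located the difficulty there. Concretely: $\mathbf{1}_{T_y}$ is not in $W^{1,2}(X)$, so you should test $\Delta_z p_t(x,\cdot)$ against $h_n\psi_\varepsilon$, where $\psi_\varepsilon$ is the piecewise affine ramp from $0$ to $1$ across a subsegment of length $\varepsilon$ of the edge containing $y$ and $h_n$ is the cutoff family of Lemma \ref{approxi L4} (as in the proof of Lemma \ref{poinc_div}). The identity $\int_X\Delta u\,\phi\,d\mu=-\int_{\mathcal S}\partial u\,\partial\phi\,d\nu$ for such $\phi$ produces a local term $\tfrac1\varepsilon\int \partial_y p_t(x,\cdot)\,d\nu$, which converges to $\partial_y p_t(x,y)$ at $\nu$-Lebesgue points, plus a boundary term at infinity of size $3^{-n}\int_{3^n\bar W_0\setminus 3^{n-1}\bar W_0}|\partial_y p_t(x,\cdot)|\,d\nu$. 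A potential circularity lurks here (one seemingly needs decay of the very gradient being estimated), but it is avoided because $p_t(x,\cdot)\in W^{1,2}(X)$ by spectral theory, so $\partial_y p_t(x,\cdot)\in L^2(\mathcal S,\nu)$ and Cauchy--Schwarz gives the bound $C3^{-n/2}\|\partial_y p_t(x,\cdot)\|_{L^2(\text{annulus})}\to0$; on the $\mu$-side, dominated convergence applies since $\Delta_z p_t(x,\cdot)=\partial_t p_t(x,\cdot)\in L^1(X,\mu)$ by Lemma \ref{sub_time}. With that justification supplied, your remaining steps (the bound $|\partial_t p_t|\le Ct^{-1}p_{ct}$ from \cite{MR1423289}, the choice of the component not containing $x$ so that $d(x,z)\ge d(x,y)$ on it, and the final matching against the sub-Gaussian lower bound) are all sound.
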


The key idea and novelty  in this estimate  is to prove a substantial improvement of the weak Bakry-\'Emery  estimates that were first  obtained in \cite{ABCRST3}, see Theorem \ref{Lipschitz pt}. After proving the pointwise gradient estimate \eqref{GBVicsek1}, we will shift our focus to the study of $L^p$ gradient bounds and we obtain the following, see Lemma \ref{lemma int}:

\begin{theorem}\label{GEVp intro}
 For $p \ge 1$, the heat kernel $p_t(x,y)$ satisfies the following $L^p$ gradient estimate: for every $x \in X$
    \begin{align}\label{GBpVicsek1}
 \int_\mathcal{S} | \partial_y p_t(x,y)|^p d\nu(y) \le \frac{C}{t^{p-1/d_w}}   \quad t >0.
 \end{align}
\end{theorem}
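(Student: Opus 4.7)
The plan is to combine the pointwise gradient bound of Theorem \ref{GEVintro} with a distribution-function / layer-cake argument. Using the pointwise estimate together with the sub-Gaussian upper bound on $p_{ct}$, and the convenient cancellation $(1+d_h)/d_w = d_w/d_w = 1$, one first obtains the sub-Gaussian bound
\[
|\partial_y p_t(x,y)| \le \frac{C}{t^{1/d_w}}\,p_{ct}(x,y) \le \frac{C}{t}\exp\Bigl(-c\Bigl(\tfrac{d(x,y)^{d_w}}{t}\Bigr)^{1/(d_w-1)}\Bigr).
\]
Inverting, the super-level set $\{y\in\mathcal S:\,|\partial_y p_t(x,y)|>s\}$ is contained in a ball $B(x,r(s,t))$ with
\[
r(s,t)\;\asymp\;t^{1/d_w}\Bigl(\log\tfrac{C}{s\,t}\Bigr)^{(d_w-1)/d_w},\qquad 0<s<C/t.
\]

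I would then write the $L^p$ integral via the layer-cake formula
\[
\int_{\mathcal S}|\partial_y p_t(x,y)|^p\,d\nu(y) \;=\; p\int_{0}^{C/t} s^{p-1}\,\nu\bigl(\{y\in\mathcal S:\,|\partial_y p_t(x,y)|>s\}\bigr)\,ds,
\]
and apply a geometric length bound of the form $\nu(\mathcal S\cap B(x,r))\le Cr$. The problem then reduces to $C\int_0^{C/t}s^{p-1}\,r(s,t)\,ds$, and the substitution $s=Ct^{-1}e^{-v}$ collapses this into $C\,t^{-p+1/d_w}\int_0^\infty v^{(d_w-1)/d_w}e^{-pv}\,dv$; the remaining $v$-integral is a finite $p$-dependent Gamma-type constant, giving exactly the exponent $p-1/d_w$ claimed in \eqref{GBpVicsek1}.

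The main obstacle, and the only genuinely non-elementary ingredient, is the linear skeleton volume bound $\nu(\mathcal S\cap B(x,r))\le Cr$. Since $\nu$ is not a Radon measure, the skeleton branches at every scale, and $\nu$ is mutually singular with $\mu$, one cannot simply appeal to volume doubling for $\mu$: the bound must be proved by hand using the self-similar construction of the Vicsek set, by summing the contributions to the 1D length coming from the scales $3^{-n}\le r$. The ratios $(5,3)$ of the iterated function system force the resulting geometric series to sum to $O(r)$. Once this length estimate is secured, the remainder of the argument is a routine change of variable in the layer-cake integral.
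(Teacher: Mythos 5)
Your approach fails at the step you yourself identify as the crux: the skeleton length bound $\nu(\mathcal S\cap B(x,r))\le Cr$ is false. In fact $\nu(\mathcal S\cap B(x,r))=+\infty$ for every $r>0$, as the paper notes in the preliminaries (``the measure of any ball with positive radius is infinite''). Concretely, the cable system $\bar W_n$ consists of roughly $5^n$ edges of length $3^{-n}$, so the level-$n$ contribution to the length of the skeleton inside a fixed ball is of order $(5/3)^n$; the geometric series you invoke has ratio $5/3>1$ and diverges rather than summing to $O(r)$. This is precisely why the paper warns (in the introduction and at the start of Section 3.3) that \eqref{GBpVicsek1} cannot be obtained by integrating the pointwise bound \eqref{GBVicsek1}: for fixed $x$ and $t$ the majorant $\frac{C}{t}\exp\bigl(-c(d(x,y)^{d_w}/t)^{1/(d_w-1)}\bigr)$ is bounded below by a positive constant on any ball around $x$ and hence is not in $L^p(\mathcal S,\nu)$ for any finite $p$. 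Your layer-cake integral is therefore $+\infty$ at every level $s$, and no change of variables can rescue it.

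The finiteness of $\int_{\mathcal S}|\partial_y p_t(x,y)|^p\,d\nu(y)$ is a cancellation/smallness phenomenon on the fine edges of the skeleton that is invisible to the pointwise upper bound. The paper's route is different in kind: it first reduces to $p=1$ using the pointwise bound (this part of your plan is fine), and then for $p=1$ it uses the co-differential $\partial^*$ and the Poincar\'e-type inequality of Lemma \ref{poinc_div},
\begin{equation}
\int_{\mathcal S}|\eta|\,d\nu\le\int_X d(x_0,x)\,|\partial^*\eta|(x)\,d\mu(x),
\end{equation}
applied with $\eta=\partial_y p_t(x,\cdot)$, for which $\partial^*\eta=-\Delta_y p_t(x,\cdot)=-\partial_t p_t(x,\cdot)$. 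This converts the integral over $(\mathcal S,\nu)$ into an integral over $(X,\mu)$, where $\mu$ is Ahlfors regular and the sub-Gaussian bound on $\partial_t p_t$ (Lemma \ref{sub_time}) closes the argument. You would need to adopt some such duality mechanism; the purely geometric route you propose cannot work.
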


It should be noted that \eqref{GBpVicsek1} is by no means simply obtained by integrating \eqref{GBVicsek1} since it turns out that the function $y\to p_{ t}(x,y)$ is not even in $L^p (\mathcal S,\nu)$ for $p <+\infty$. To prove \eqref{GBpVicsek1}, we will instead use  the adjoint operator of $\partial$ and a Poincar\'e-type inequality it satisfies (see Lemma \ref{poinc_div}).

As an application of the heat kernel gradient estimates we prove the following family of Nash inequalities
\[
\| f \|_{L^p(X,\mu)} \le C \| \partial f \|_{L^p(\mathcal S,\nu)}^{\theta} \| f \|^{1-\theta}_{L^1(X,\mu)}, \quad p>1,
\]
where $\theta=\frac{(p-1)(d_w-1)}{pd_w-1}$ and also obtain some embedding theorems related to fractional Laplacians (see Theorems \ref{Nash Vicsek} and \ref{frac Riesz}). Those embeddings  allow us to raise a natural open question concerning Riesz transforms on the Vicsek set:

\begin{conjecture}
Do we have for  $p >1$,
\[
\| (-\Delta)^{\alpha_p} f \|_{L^p(X,\mu)}  \simeq \| \partial f \|_{L^p(\mathcal S,\nu)}
\]
with $\alpha_p=\left( 1-\frac{2}{p} \right) \frac{1}{d_w}+\frac{1}{p}$ ?
\end{conjecture}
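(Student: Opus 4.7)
Since the final statement is posed as an open question, this proposal is necessarily exploratory. Setting $R_p := \partial \circ (-\Delta)^{-\alpha_p}$, the conjecture is equivalent to $R_p$ being an isomorphism between $L^p(X,\mu)$ and $L^p(\mathcal S,\nu)$. The case $p=2$ is special: $\alpha_2 = 1/2$ and both sides coincide with $\mathcal E(f,f)^{1/2}$ by construction of $\partial$ through the Dirichlet form (see \cite{BC}). One therefore focuses on $p \neq 2$, splitting into the upper bound $\|\partial f\|_{L^p(\mathcal S,\nu)} \lesssim \|(-\Delta)^{\alpha_p} f\|_{L^p(X,\mu)}$ and its reverse.

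For the upper bound, the natural plan is to use the subordination representation
\[
(-\Delta)^{-\alpha_p} g = \frac{1}{\Gamma(\alpha_p)} \int_0^\infty t^{\alpha_p - 1} P_t g \, dt,
\]
apply $\partial$ under the integral, and estimate $\|\partial P_t g\|_{L^p(\mathcal S, \nu)}$ in terms of $\|g\|_{L^p(X,\mu)}$. The pointwise bound of Theorem \ref{GEVintro} yields $|\partial P_t g(y)| \le C t^{-1/d_w} P_{ct} |g|(y)$, and combined with the $L^p$ gradient bound of Theorem \ref{GEVp intro} through a Schur-type estimate or Riesz--Thorin interpolation, this should provide a decay rate $\|\partial P_t g\|_{L^p(\mathcal S,\nu)} \le C t^{-\beta_p} \|g\|_{L^p(X,\mu)}$. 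The exponent $\alpha_p$ would then be precisely the one making the time integral $\int_0^\infty t^{\alpha_p - 1 - \beta_p} dt$ borderline summable, split dyadically at a scale determined by $\|g\|_p$, in the spirit of the Hardy--Littlewood--Sobolev and fractional-integral arguments of \cite{devyver, ACDH}. Matching algebra gives $\beta_p$ consistent with the conjectured $\alpha_p = 1/p + (1-2/p)/d_w$, which encodes the interpolation between the diagonal ($p=2$) and ultracontractive ($p=\infty$) regimes.

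For the reverse bound, the natural route is through the adjoint $\partial^*$ and the Poincar\'e-type inequality of Lemma \ref{poinc_div}. Since $-\Delta = \partial^* \partial$ in the weak sense inherited from the Dirichlet form, one can try to express $(-\Delta)^{\alpha_p} f$ in terms of $\partial f$ and a fractional-order operator acting on the skeleton side, then transport the bound by a dual Schur argument against test functions on $\mathcal S$. For $p<2$, duality against the upper bound at the conjugate exponent is a tempting shortcut, but the asymmetry between the reference measures $\mu$ and $\nu$ prevents one from simply taking adjoints as in the Riemannian setting.

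The main obstacle, in both directions, is precisely the singularity of $\nu$ with respect to $\mu$: classical Calder\'on--Zygmund machinery (maximal functions, $H^1$--BMO duality, Littlewood--Paley theory) operates with a single underlying measure, while here the one-dimensional length measure on the skeleton and the $d_h$-dimensional Hausdorff measure on $X$ are genuinely different. Moreover, Theorem \ref{GEVp intro} delivers the correct $L^p$ scaling but lacks the off-diagonal sub-Gaussian decay in the $\nu$-variable needed to control the long-time part of the subordination integral uniformly in $x$. Producing such a refined mixed sub-Gaussian $L^p$ estimate for $\partial_y p_t(x,y)$ is, in my view, the key new input required to settle the conjecture.
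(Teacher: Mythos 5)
The statement you are addressing is not a theorem of the paper: it is posed there as an open question (the \texttt{conjecture} environment in this paper prints as ``Open question''), and the authors give no proof. There is therefore nothing to compare your argument against, and your proposal --- which is explicitly exploratory and does not claim to settle the question --- is in the right register. Still, it is worth being precise about where your outline stops and how it relates to what the paper does prove.

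Your observation about $p=2$ is correct: $\alpha_2=1/2$ and $\|(-\Delta)^{1/2}f\|_{L^2(X,\mu)}^2=\mathcal{E}(f,f)=\|\partial f\|_{L^2(\mathcal S,\nu)}^2$ by the spectral theorem, so that case holds with equality. For $p\neq 2$, your plan for the direction $\|\partial f\|_{L^p(\mathcal S,\nu)}\lesssim\|(-\Delta)^{\alpha_p}f\|_{L^p(X,\mu)}$ hits the standard endpoint obstruction, and you should recognize that this is exactly where the paper itself stops. By Theorem \ref{Lp bound gt} one has $\|\partial P_t g\|_{L^p(\mathcal S,\nu)}\le C t^{-\alpha_p}\|g\|_{L^p(X,\mu)}$, i.e. your $\beta_p$ equals $\alpha_p$, so the subordination integrand is $t^{\alpha_p-1-\beta_p}=t^{-1}$; this is not ``borderline summable'' but genuinely divergent at both ends. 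The dyadic splitting at a scale determined by a ratio of norms that you invoke is precisely the mechanism behind the paper's Theorem \ref{frac Riesz}, and it yields only the sub-critical multiplicative inequalities \eqref{MI1} (for $s<\alpha_p$) and \eqref{MI2} (for $s>\alpha_p$); the paper explicitly records that the case $s=\alpha_p$ is left open and restates it as the second open question on the Riesz transform. So your outline, carried out, reproduces Theorem \ref{frac Riesz} but cannot reach the conjectured equivalence without a qualitatively new ingredient exploiting cancellation rather than size. Your diagnosis of the two genuine obstacles --- the mutual singularity of $\mu$ and $\nu$, which disables one-measure Calder\'on--Zygmund machinery, and the lack of an $L^p$ gradient estimate with sub-Gaussian off-diagonal decay in the $\nu$-variable --- is accurate and consistent with the paper's own discussion.
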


In the second part of the paper we prove the existence of a semigroup $\vec{P_t}$ on $L^2(\mathcal S,\nu)$ that satisfies the intertwining
\[
\partial P_t=\vec{P_t} \partial,
\]
where $P_t$ is the heat semigroup on $L^2(X,\mu)$ with kernel $p_t(x,y)$. By  analogy with the situation on Riemannian manifolds,  it is natural to  call $\vec{P_t}$ the Hodge semigroup on the Vicsek set.  Our main result concerning this Hodge semigroup is the following, see Theorem \ref{bound hodge2}.

\begin{theorem}
The Hodge semigroup $\vec{P}_t$ admits a  kernel $\vec{p}_t (x,y)$ that satisfies the estimate: For $\nu \otimes \nu$ a.e. $x,y \in \mathcal S$,
    \[
 | \vec{p}_t (x,y)| \le  \frac{C  }{t^{1/d_w}}  \exp\biggl(-c\Bigl(\frac{d(x,y)^{d_{w}}}{t}\Bigr)^{\frac{1}{d_{w}-1}}\biggr), \quad t >0.
 \]
\end{theorem}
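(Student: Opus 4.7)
The plan is to deduce the kernel estimate from the intertwining $\partial P_t = \vec{P}_t \partial$, the pointwise gradient bound of Theorem~\ref{GEVintro}, and the unique-geodesic structure of the Vicsek tree. Fixing a root $0 \in X$ and using the fundamental-theorem-of-calculus identity $f(z) = f(0) + \int_{[0,z]} \partial f\, d\nu$ recalled in the introduction, the intertwining applied to $F = \partial f$ gives
\[
\vec{P}_t F(x) = \partial_x P_t f(x) = \int_X \partial_x p_t(x,z)\, f(z)\, d\mu(z) = \int_{\mathcal{S}} \Bigl( \int_{Z_y} \partial_x p_t(x,z)\, d\mu(z) \Bigr)\, F(y)\, d\nu(y),
\]
where $Z_y := \{z \in X : y \in [0,z]\}$ is the shadow of $y$ from the root. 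This identifies the Hodge kernel on the range of $\partial$ as $\vec{p}_t(x,y) = \int_{Z_y} \partial_x p_t(x,z)\, d\mu(z)$; one then extends by a density/symmetry argument to all of $L^2(\mathcal{S}, \nu)$.

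Inserting the pointwise bound $|\partial_x p_t(x,z)| \le C t^{-1/d_w} p_{ct}(x,z)$ from Theorem~\ref{GEVintro} together with the stochastic completeness $\int_X p_{ct}(x,z)\, d\mu(z) = 1$, one immediately gets the on-diagonal estimate $|\vec{p}_t(x,y)| \le C t^{-1/d_w}$. For the sub-Gaussian off-diagonal factor, I would exploit the symmetry $\vec{p}_t(x,y) = \vec{p}_t(y,x)$ along with the tree-geometric observation that for every $x \neq y$ at least one of $x \notin Z_y$, $y \notin Z_x$ must hold, since both would force each of $x$ and $y$ to lie on the geodesic from the root to the other and hence $x = y$. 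In the first case the uniqueness of geodesics in the tree gives $d(x,z) = d(x,y) + d(y,z)$ for every $z \in Z_y$; in the second, symmetry reduces to the first. One then concludes by inserting this additivity into the sub-Gaussian upper bound on $p_{ct}$, using the elementary convexity inequality $(a+b)^{d_w/(d_w-1)} \ge a^{d_w/(d_w-1)} + b^{d_w/(d_w-1)}$ to split the exponential, and integrating the residual factor against the Ahlfors growth $\mu(B(x,r)) \sim r^{d_h}$.

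The main obstacle I anticipate is the rigorous justification that the shadow-integration formula represents the kernel of $\vec{P}_t$ not merely on the range of $\partial$ but on all of $L^2(\mathcal{S}, \nu)$. The formula depends apparently on the choice of root, and while the only indeterminacy in the $\partial^{-1}$ step is a constant (killed by $\partial$ and preserved by $P_t$), the singular nature of $\nu$ with respect to $\mu$ together with the observation noted in the introduction that $y \mapsto p_t(x,y)$ fails to lie in $L^p(\mathcal{S}, \nu)$ for any $p < \infty$ makes every integration-by-parts step delicate. A more robust alternative is to bypass the explicit formula altogether by a Davies--Gaffney perturbation argument adapted to the walk dimension $d_w$, transferring the weighted $L^2$ estimates that underlie the sub-Gaussian bound on $p_t$ directly to $\vec{P}_t$ via the intertwining.
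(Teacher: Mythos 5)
Your tree-geometric computation is sound: the dichotomy ($x\notin Z_y$ or $y\notin Z_x$), the resulting additivity $d(x,z)=d(x,y)+d(y,z)$ for $z\in Z_y$, the superadditivity of $r\mapsto r^{d_w/(d_w-1)}$, and the integration of the residual factor against $\mu$ do produce the stated bound \emph{for the explicit shadow formula}. But the step you yourself flag as the ``main obstacle'' is a genuine gap, not a technicality, and it is not closed by the ``density/symmetry argument'' you invoke. First, the identity $\vec{P}_t F(x)=\int_{Z_y}\partial_x p_t(x,z)\,d\mu(z)\,F(y)\,d\nu(y)$ is derived only for $F$ in the range of $\partial$; to conclude that this formula is \emph{the} kernel of $\vec{P}_t$ on $L^2(\mathcal S,\nu)$ you would need $y\mapsto \vec p_t(x,y)$ to lie in $L^2(\mathcal S,\nu)$ (or some continuity of $\eta\mapsto\int\vec p_t(x,y)\eta\,d\nu$ on $L^2$), and the pointwise sub-Gaussian bound does \emph{not} supply this: as the paper stresses, $\nu$ gives infinite mass to every ball, so a profile $\exp(-c(d(x,y)^{d_w}/t)^{1/(d_w-1)})$ is not in $L^p(\mathcal S,\nu)$ for any finite $p$. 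Second, the symmetry $\vec p_t(x,y)=\vec p_t(y,x)$ that you need for the case $y\notin Z_x$ is inherited from self-adjointness of $\vec{P}_t$ only \emph{after} the representation problem is solved, so it cannot be used to patch that same problem. Third, the pointwise bound $|\partial_x p_t(x,z)|\le Ct^{-1/d_w}p_{ct}(x,z)$ holds for $\nu$-a.e.\ $x$ with an exceptional set depending on $z$ (the paper remarks on this explicitly), so moving it inside $\int_{Z_y}\cdots d\mu(z)$ requires a joint-measurability/Fubini argument you do not supply.

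For contrast, the paper avoids the explicit formula entirely. It first upgrades the gradient bound to the weighted inequality $|\partial P_t f(x)|^p\le Ct^{1-2/d_w}\int_{\mathcal S}p_{ct}(x,y)|\partial f(y)|^p\,d\nu(y)$ by combining the median-improved Bakry--\'Emery estimate (Remark \ref{remark median}) with the heat-kernel Poincar\'e inequality of Lemma \ref{lemma pseud point}; it transfers this to $\vec P_t\eta$ for all $\eta\in L^1\cap L^\infty(\mathcal S,\nu)$ via the intertwining and the approximation Lemma \ref{approxi L4}; this gives $\|\vec P_t\|_{L^1\to L^\infty}\le Ct^{-1/d_w}$, whence a kernel exists by a Dunford--Pettis-type theorem, and the pointwise sub-Gaussian bound is recovered by testing the weighted inequality against indicators of finite-length sets. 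If you want to salvage your route, the cleanest fix is to prove your case analysis at the level of operators rather than kernels — i.e.\ establish $|\vec P_t\eta(x)|\le Ct^{-1/d_w}\int_{\mathcal S}p_{ct}(x,y)|\eta(y)|\,d\nu(y)$ on a dense class and extract the kernel afterwards — which is essentially what the paper does.
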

Finally in the last part of the paper we show how the previous results can  appropriately be modified  if the underlying space is the bounded Vicsek set.

%
    \paragraph{Notations:} Throughout the paper, we use the letters $c,C, c_1, c_2, c_3, c_4$  to denote positive constants which may vary from line to line.

\section{Preliminaries}

\subsection{Vicsek set}\label{notations Vicsek}

\begin{figure}[htb]\label{figure1}
 \noindent
 \makebox[\textwidth]{\includegraphics[height=0.22\textwidth]{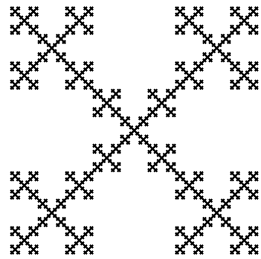}}
  	\caption{Bounded Vicsek set $K$}
\end{figure}

Let  $q_1=(0,0)$ be the center  of the unit square in the  plane  and let $q_2=(-\sqrt{2}/2,\sqrt{2}/2)$, $q_3=(\sqrt{2}/2,\sqrt{2}/2)$ , $q_4=(\sqrt{2}/2,-\sqrt{2}/2)$, and $q_5=(-\sqrt{2}/2,-\sqrt{2}/2)$ be the 4 corners of the square.
Define $\psi_i(z)=\frac13(z-q_i)+q_i$ for $1\le i\le 5$. The Vicsek set $K$ is the unique non-empty compact set such that 
\[
K=\bigcup_{i=1}^5 \psi_i(K)
\]
and the unbounded Vicsek set  $X$ is defined by
\[
X=\bigcup_{m=1}^{\infty} 3^m K .
\]
Let $W_0=\{q_1, q_2, q_3,q_4,q_5\}$. We define a  sequence of  sets of vertices  $\{W_n\}_{n\ge 0}$ inductively by
\[
W_{n+1}=\bigcup_{i=1}^5 \psi_i(W_n) \subset K.
\]
By definition, a cable system with vertices in $W_n$ is a union of segments, called edges, whose extremities are in $W_n$.  The unique connected cable system with vertices in $W_n$ and included in $K$ will be denoted $\bar{W}_n$. 
 \begin{figure}[htb]\label{figure2}
 \noindent
 \makebox[\textwidth]{\includegraphics[height=0.20\textwidth]{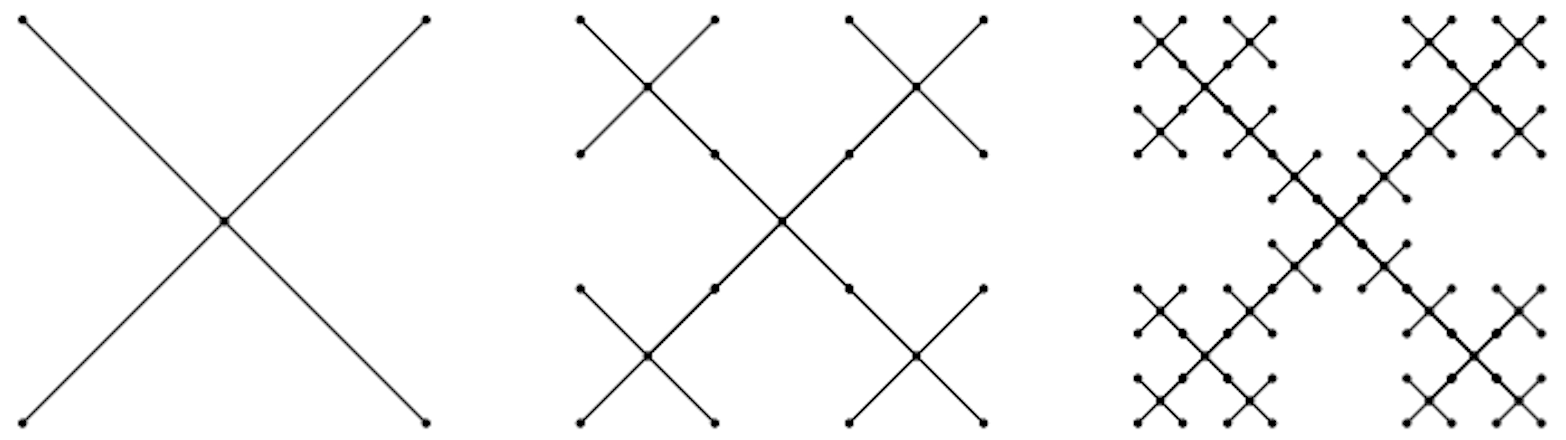}}
  	\caption{Cable systems $\bar{W}_0$, $\bar{W}_1$ and $\bar{W}_2$}
\end{figure}
%
We will denote
\[
 V_n=\bigcup_{m \ge 1} 3^{m-n} W_m, \quad \bar{V}_n=\bigcup_{m \ge 1} 3^{m-n} \bar{W}_m.
\]
The set 
\[
\mathcal{S}= \bigcup_{n \ge 0} \bar{V}_n
\]
is called the skeleton of $X$, it is a dense subset in $X$.  We have then a natural sequence of  cable systems $\{\bar{V}_n\}_{n\ge 0}$ whose edges have length $3^{-n}$ and whose set of vertices is $V_n$. 

If $u,v$ are adjacent vertices in  $\bar{V}_n$ connected by an edge, we will write $u \sim v$ and say that $u \le v$ if the geodesic distance from the center 0 (i.e., $q_1$)  to $u$ in  $\bar{V}_n$ is less than the geodesic distance from $0$ to $v$. We will denote by $\mathbf{e}(u,v)$  the unique  edge in $\bar{V}_n$ connecting $u$ to $v$. 
 
We note that  $\bar{V}_n$ is a metric tree when equipped with the geodesic distance. Given any $u,v \in \bar{V}_n$, we will denote by $\gamma (u,v)$ the unique geodesic in $\bar{V}_n$ connecting $u$ to $v$.

\subsection{Geodesic distance and measures}

On $X$, we will consider the  geodesic distance $d$. More precisely, for $x,y \in X$, the distance $d(x,y)$ is defined as the infimum of the length of the rectifiable curves $\gamma:[0,1] \to \mathbb{R}^2$ such that $\gamma(0)=x$, $\gamma(1)=y$ and $\gamma(s) \in X$ for every $s \in [0,1]$. Note that the metric space $(X,d)$ is a metric tree.

The Hausdorff  measure $\mu$ of $(X,d)$ is the unique measure on $X$ such that for every $i_1, \cdots, i_n \in \{1,2,3, 4,5\}$ and $m \ge 0$
\[
\mu(3^m \psi_{i_1} \circ \cdots \circ \psi_{i_n} (K))=5^{m-n}.
\]
The Hausdorff dimension of $X$ is then $d_h=\frac{\log 5}{\log 3}$ and the metric space $(X,d)$ is $d_h$-Ahlfors regular in the sense that there exist constants $c,C>0$ such that for every $x \in K$, $r \ge 0$,
\[
c \,r^{d_h} \le \mu (B(x,r)) \le Cr^{d_h},
\]
where $B(x,r)=\left\{ y \in X, d(x,y) \le r \right\}$ denotes the closed ball with center $x$ and radius $r$.

There is also a reference measure $\nu$ on the skeleton $\mathcal{S}$, the length measure. It is characterized by the property that for every edge $\mathbf{e}$ in $\bar{V}_n$ connecting two neighboring vertices:
\[
\nu(\mathbf{e})= 3^{-n}.
\]
The measure $\nu$ is neither finite nor a Radon measure, since the skeleton obviously has infinite length and the measure of any  ball with positive radius is infinite. It is also clear that $\nu$ is singular with  respect to the Hausdorff measure $\mu$, since  the  skeleton has $\mu$-measure zero. However, $\nu$ is $\sigma$-finite on the $\sigma$-field generated by the $\mathbf{e}(x,y)$, $x,y \in V_n$, $x \sim y$, $n \ge 0$. We note that, up to a non-negative constant,  $\nu$ is the unique $\sigma$-finite measure on $\mathcal{S}$ such that for every subset $A \subset \mathcal{S}$ with $\nu(A)<+\infty$  and every affine map $\Phi :\mathbb{C} \to \mathbb{C}$, $\Phi(z)=\alpha z+\beta$ such that $\Phi (A) \subset \mathcal S$, one has  $\nu(\Phi (A)) =| \alpha | \nu(A) $.


\subsection{Weak gradients, Sobolev spaces}

We denote by $L^1_{\mathrm loc} (\mathcal{S} ,\nu)$ the space of  measurable functions with respect to the $\sigma$-field generated by  the $\mathbf{e}(x,y)$, $x,y \in V_n$, $x \sim y$, $n \ge 0$ and $\nu$-integrable on each of those $\mathbf{e}(x,y)'s$.

\begin{definition}
Let $f \in C(X)$. We say that $f$ admits a weak gradient if there exists a measurable function $\partial f \in L^1_{\mathrm loc} (\mathcal{S} ,\nu)$ such that that for every $n$ and for every adjacent $u,v\in V_n$ with $u\le v$
\begin{equation}\label{eq:weakGradient}
f(v)-f(u)=\int_{\mathbf e(u,v)} \partial f d\nu.
\end{equation}
\end{definition}

The set $\bar{V}_n$ is a cable system.  As such, see for instance Section 5.1 in  \cite{MR3896108}, one can see any continuous function on $\bar{V}_n$ as a collection of functions $(f_\mathbf{e} )_{\mathbf{e} \in E_n}$, where $E_n$ is the set of edges of $\bar{V}_n$ and $f_\mathbf{e}: \left[0, 3^{-n} \right] \to \mathbb{R}$ is a continuous function with the appropriate boundary conditions.  For $f \in C(X)$ let us  denote $f^n=f|_{\bar{V}_n}$. One can then see that $f$ admits a weak gradient if and only if we have for all $\mathbf{e}$ in $E_n$, $f^n_\mathbf{e} \in W^{1,1}\left(\left[0, 3^{-n} \right]\right)$, where for an interval $I \subset \mathbb{R}$, $W^{1,1}(I)$ is the  usual $(1,1)$ Sobolev space for the Lebesgue measure. The following proposition is immediate from properties of  weak derivatives on the real line.

\begin{proposition}[Chain, Leibniz, and scaling rules]\label{Chain rule}
\

\begin{enumerate}
 \item Let $f \in C(X)$ and let $\Phi$ be a $C^1$ function on $\mathbb R$. If $f$  admits a weak gradient, then $\Phi(f)$ admits a weak gradient and
 \[
 \partial \Phi(f)=\Phi'(f)\partial f.
 \]
 \item Let $f,g \in C(X)$. If $f$ and $g$ admit weak gradients, then $fg$ admits a weak gradient and
 \[
 \partial (fg)=f \partial g +g \partial f.
 \]
 \item Let $f\in C(X)$. If $f$ admits a weak gradient, then for every $i_1, \cdots, i_n \in \{1,2,3, 4,5\}$, the function $f\circ \psi_{i_1} \circ \cdots \circ \psi_{i_n}$ admits a weak gradient and 
 \[
\partial (f \circ\psi_{i_1} \circ \cdots \circ \psi_{i_n})= 3^{-n} (\partial f  ) \circ \psi_{i_1} \circ \cdots \circ \psi_{i_n}.
\]
\end{enumerate}
\end{proposition}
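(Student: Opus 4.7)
The plan is to prove all three items by reducing to the well-established theory of weak derivatives on bounded intervals of $\mathbb{R}$, leveraging the characterization stated just above the proposition: a continuous function $f$ on $X$ admits a weak gradient iff, on each edge $\mathbf{e}\in E_n$, the restriction $f^n_{\mathbf{e}}\colon[0,3^{-n}]\to\mathbb{R}$ belongs to $W^{1,1}([0,3^{-n}])$, in which case $\partial f$ is (edge by edge) the classical one-dimensional weak derivative. The uniqueness of $\partial f$ as an element of $L^1_{\mathrm{loc}}(\mathcal{S},\nu)$ (which follows from its characterization through the fundamental theorem of calculus on every edge of every $\bar V_n$) ensures that the identities only need to be verified edge by edge.

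For (i), fix $\mathbf{e}\in E_n$. Since $f$ is continuous and $\mathbf{e}$ is compact, $f^n_{\mathbf{e}}$ takes values in a bounded interval on which $\Phi\in C^1(\mathbb{R})$ has bounded derivative. A standard one-dimensional result then gives $\Phi\circ f^n_{\mathbf{e}}\in W^{1,1}([0,3^{-n}])$ with classical derivative $\Phi'(f^n_{\mathbf{e}})\,(f^n_{\mathbf{e}})'$. Gluing over all edges of $\bar V_n$ and over all $n$ yields $\partial\Phi(f)=\Phi'(f)\,\partial f$ on $\mathcal{S}$. Item (ii) is identical: on each edge, $f^n_{\mathbf{e}}$ and $g^n_{\mathbf{e}}$ are continuous and belong to $W^{1,1}([0,3^{-n}])$, hence so does their product, with derivative $f^n_{\mathbf{e}}(g^n_{\mathbf{e}})'+(f^n_{\mathbf{e}})'g^n_{\mathbf{e}}$; assembling edge by edge gives the Leibniz rule.

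For (iii), write $\Psi=\psi_{i_1}\circ\cdots\circ\psi_{i_n}$; this is an affine similarity of $\mathbb{C}$ with ratio $3^{-n}$, and it sends $V_m$ into $V_{m+n}$ preserving adjacency. Given adjacent $u,v\in V_m$ with $u\le v$, the images $\Psi(u),\Psi(v)$ are adjacent in $V_{m+n}$, and $\Psi$ maps $\mathbf{e}(u,v)$ bijectively onto $\mathbf{e}(\Psi(u),\Psi(v))$ while scaling $\nu$-lengths by $3^{-n}$ (this is the uniqueness/scaling property of $\nu$ recorded in Section~2.2). Applying the defining identity for $\partial f$ to the image edge and changing variables under $\Psi$ gives
\[
(f\circ\Psi)(v)-(f\circ\Psi)(u)=\int_{\mathbf{e}(\Psi(u),\Psi(v))}\partial f\,d\nu=\int_{\mathbf{e}(u,v)}3^{-n}(\partial f)\circ\Psi\,d\nu,
\]
which by the uniqueness of the weak gradient yields $\partial(f\circ\Psi)=3^{-n}(\partial f)\circ\Psi$.

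There is no genuine obstacle here; the only point requiring minor care is the coherence of the edge-by-edge definition across the refining sequence $\bar V_n$, but this is built into the definition via \eqref{eq:weakGradient}, so once (i)--(iii) are verified on each $\bar V_n$ they automatically hold $\nu$-a.e.\ on $\mathcal{S}$.
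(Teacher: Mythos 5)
Your reduction to the one-dimensional theory on each edge is exactly the route the paper has in mind: the paper gives no written proof beyond the remark that the proposition ``is immediate from properties of weak derivatives on the real line,'' and your treatment of (i) and (ii) --- boundedness of the continuous restrictions on compact edges, the classical chain and Leibniz rules for absolutely continuous functions, and gluing over the coherent edge decompositions --- is a correct implementation of that remark.

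There is, however, a gap in your argument for (iii), at the step where you apply the defining identity \eqref{eq:weakGradient} to the image edge in the form $(f\circ\Psi)(v)-(f\circ\Psi)(u)=\int_{\mathbf{e}(\Psi(u),\Psi(v))}\partial f\,d\nu$. This requires not only that $\Psi(u)$ and $\Psi(v)$ be adjacent, but that $\Psi(u)\le\Psi(v)$, i.e.\ that $\Psi$ preserve the orientation of edges away from the root $0$; it does not in general. Take $n=1$, $\Psi=\psi_2$, and the adjacent pair $u=q_1\le v=q_4$ (an edge of length $1$). Then $\Psi(u)=\psi_2(q_1)=\tfrac23 q_2$ and $\Psi(v)=\psi_2(q_4)=\tfrac13 q_2$ are adjacent in $V_1$, but since the diagonal lies in $X$ we have $d(0,\tfrac13 q_2)=\tfrac13<\tfrac23=d(0,\tfrac23 q_2)$, so $\Psi(v)\le\Psi(u)$: the orientation is reversed, and on this edge the correct identity is $\partial(f\circ\psi_2)=-\tfrac13(\partial f)\circ\psi_2$. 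More generally the sign flips exactly on the edges of the geodesic from $q_1$ to the corner $w_0$ whose image $\Psi(w_0)$ is the gateway of the cell $\Psi(K)$ --- a set of positive $\nu$-measure --- and is $+$ elsewhere. To be fair, this sign is also absent from the statement itself (and is harmless downstream, since the paper only ever uses $|\partial(\cdot)|$ and quantities built from it), but a complete proof of (iii) must either track the orientation or state the identity up to an edge-dependent sign; your write-up assumes orientation-preservation silently, and that is where it fails.
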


\begin{definition}
Let $1 \le p\le \infty$. For $f\in C(X)$, we say that $f\in D^{1,p}(X)$ if it admits a weak gradient $\partial f\in L^p(\mathcal S, \nu)$. We will denote $W^{1,p}(X)=L^p(X,\mu) \cap D^{1,p}(X)$.
\end{definition}

The seminorm on $D^{1,p}(X)$ is defined by 
\[
\|f\|_{D^{1,p}(X)}:=\|\partial f\|_{L^p(X,\nu)}.
\]
One can see that $D^{1,\infty}(X)$ is the space of Lipschitz continuous functions on $X$ and that $\|\partial f\|_{L^\infty(X,\nu)}$ is the Lipschitz constant $\mathrm{Lip} (f)$ of $f$.
We refer to \cite[Section 3]{BC} for equivalent characterizations in the compact Vicsek set of the space $W^{1,p}(X)$, in terms of Korevaar-Schoen-Sobolev seminorms and  discrete $p$-energies.   

We note that it easily follows from the definition that if $f\in D^{1,p}(X)$, then for every $x,y \in \mathcal{S}$
\[
| f(x) -f(y) | \le \int_{\gamma (x,y)} | \partial f | d\nu,
\]
where we recall that $\gamma (x,y)$ denotes the unique geodesic in $\mathcal S$ connecting $x$ to $y$. From H\"older's inequality, we get that for every $x,y \in \mathcal S$
\begin{equation}\label{eq:Holder}
| f(x) -f(y) | \le d(x,y)^{1-\frac{1}{p}} \left(\int_{\gamma (x,y)} | \partial f |^p d\nu\right)^{1/p} 
\le d(x,y)^{1-\frac{1}{p}} \|\partial f\|_{L^p(X,\nu)}.    
\end{equation}
This inequality then holds for every $x,y \in X$ since $f$ is continuous and $\mathcal S$ is dense in $X$. 
Therefore for $p>1$, any $f \in D^{1,p}(X)$ is $1-\frac{1}{p}$ H\"older continuous. 

We now collect some basic properties of the weak gradient that will be important in the sequel.

\begin{proposition}\label{proposition closed}
Let $1 < p \le +\infty$. The operator $\partial: W^{1,p}(X) \to L^p(\mathcal S,\nu)$ is closed.
\end{proposition}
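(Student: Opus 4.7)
Let $(f_n)$ be a sequence in $W^{1,p}(X)$ with $f_n \to f$ in $L^p(X,\mu)$ and $\partial f_n \to g$ in $L^p(\mathcal S,\nu)$; the goal is to produce a continuous representative $\tilde f$ of $f$ that lies in $W^{1,p}(X)$ and satisfies $\partial \tilde f=g$. The backbone of the argument is the H\"older estimate \eqref{eq:Holder} applied to the difference $f_n-f_m$: for every $x,y\in X$,
\[
|(f_n-f_m)(x)-(f_n-f_m)(y)|\le d(x,y)^{1-1/p}\,\|\partial f_n-\partial f_m\|_{L^p(\mathcal S,\nu)}.
\]
Since $(\partial f_n)$ is Cauchy in $L^p(\mathcal S,\nu)$, this shows that the sequence $(f_n)$ is uniformly $(1-1/p)$-H\"older with a modulus that contracts as $n,m\to\infty$, i.e.\ $(f_n)$ is Cauchy modulo constants.

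The first step is to fix the constants. At a base point $x_0\in X$, average the triangle inequality $|f_n(x_0)-f_m(x_0)|\le|f_n(x)-f_m(x)|+d(x,x_0)^{1-1/p}\|\partial f_n-\partial f_m\|_{L^p(\mathcal S,\nu)}$ over $x\in B(x_0,r)$ to get
\[
|f_n(x_0)-f_m(x_0)|\le \mu(B(x_0,r))^{-1/p}\,\|f_n-f_m\|_{L^p(X,\mu)} + r^{1-1/p}\,\|\partial f_n-\partial f_m\|_{L^p(\mathcal S,\nu)}.
\]
With $r$ fixed, both terms vanish as $n,m\to\infty$, so $(f_n(x_0))$ is Cauchy. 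Combined with the vanishing H\"older modulus of the differences, this forces $f_n$ to converge uniformly on bounded sets to a continuous limit $\tilde f$ (which is $(1-1/p)$-H\"older with constant $\|g\|_{L^p(\mathcal S,\nu)}$). Pointwise convergence together with $L^p$-convergence of $f_n$ to $f$ gives $\tilde f=f$ $\mu$-a.e., so $\tilde f$ is the desired continuous representative.

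The second step is to pass to the limit in the defining identity \eqref{eq:weakGradient}. For each $n$ and each edge $\mathbf e(u,v)$ with $u,v\in V_n$, $u\sim v$,
\[
f_n(v)-f_n(u)=\int_{\mathbf e(u,v)}\partial f_n\,d\nu.
\]
The left-hand side converges to $\tilde f(v)-\tilde f(u)$ by the uniform convergence at vertices. For the right-hand side, $\nu(\mathbf e(u,v))<\infty$, so $L^p(\mathcal S,\nu)$-convergence of $\partial f_n$ to $g$ localizes to $L^p$, hence $L^1$, convergence on $\mathbf e(u,v)$ by H\"older's inequality. Therefore $\tilde f(v)-\tilde f(u)=\int_{\mathbf e(u,v)} g\,d\nu$ for all such edges, which exhibits $g$ as a weak gradient of $\tilde f$. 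Since $\tilde f\in L^p(X,\mu)$ and $g\in L^p(\mathcal S,\nu)$, this yields $\tilde f\in W^{1,p}(X)$ with $\partial\tilde f=g$, proving closedness.

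The main obstacle is that $L^p(X,\mu)$-convergence gives no direct information on the values of $f_n$ at vertices $u,v\in V_n\subset\mathcal S$, the set $\mathcal S$ being $\mu$-null; yet those vertex values are exactly what appear in the definition of the weak gradient. The Sobolev--H\"older estimate \eqref{eq:Holder} is what bridges the gap, turning an $L^p(\mathcal S,\nu)$-bound on $\partial f_n$ into honest uniform control on $f_n$. This bridge genuinely breaks at $p=1$, which is why the statement is restricted to $p>1$.
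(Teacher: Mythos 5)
Your proof is correct and follows essentially the same route as the paper: both hinge on the H\"older estimate \eqref{eq:Holder} to get uniform control on $f_n$ from the $L^p(\mathcal S,\nu)$ bound on $\partial f_n$, and then pass to the limit in the edge identity \eqref{eq:weakGradient}. The only difference is cosmetic: you replace the paper's Arzel\`a--Ascoli subsequence extraction by a direct Cauchy argument (fixing the constant via averaging over a ball), which if anything makes the identification of the continuous representative $\tilde f$ of $f$ more explicit.
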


\begin{proof}
Let $f_n$ be a sequence in $W^{1,p}(X)$ that converges to some $f$ in $L^p(X,\mu)$ and such that $\partial f_n$ converges  in $L^p(\mathcal S,\nu)$ to some $g \in L^p(\mathcal S,\nu)$. It follows from \eqref{eq:Holder} that for every $n$ and for every  $u,v\in \mathcal S$
\begin{equation*}
|f_n(v)-f_n(u)| \le d(u,v)^{1-\frac{1}{p}} \|\partial f_n\|_{L^p(X,\nu)}.
\end{equation*}
 Since $\partial f_n$ converges  in $L^p(\mathcal S,\nu)$, there exists therefore a constant $C>0$ such that  for every $n$ and for every  $u,v\in \mathcal S$
\begin{equation*}
|f_n(v)-f_n(u)| \le C d(u,v)^{1-\frac{1}{p}}.
\end{equation*}
By continuity of $f_n$ and density of $\mathcal S$ in $X$, the above inequality holds true for every $u,v \in X$. Using the assumed convergence of $f_n$ to $f$ in $L^p(X,\mu)$ and the Arzela-Ascoli theorem we deduce that given $u,v \in  V_m$ with $u,v$ adjacent and $u \le v$, there exists a subsequence $f_{n_k}$ such that $f_{n_k}$ converges uniformly to $f$ on $\mathbf{e}(u,v)$. Note that
\[
f_{n_k}(v)-f_{n_k}(u)=\int_{\mathbf{e}(u,v)} \partial f_{n_k} d\nu,
\]
one obtains by taking the limit that
\[
f(v)-f(u)=\int_{\mathbf{e}(u,v)} g d\nu.
\]
Since $g \in L^p(\mathcal S,\nu)$, we conclude that $f \in W^{1,p}(X)$ and  $g =\partial f$. 
\end{proof}

\begin{proposition}
Let $1 < p \le +\infty$. The operator $\partial: D^{1,p}(X) \to L^p(\mathcal S, \nu)$ is surjective.
\end{proposition}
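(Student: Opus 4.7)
The plan is to construct, for a given $g \in L^p(\mathcal S,\nu)$, an explicit primitive of $g$ by integrating along geodesics from the root. Set $0 := q_1$ and, for $x \in \mathcal S$, define
\[
f(x) := \int_{\gamma(0,x)} g\, d\nu,
\]
where $\gamma(0,x)$ is the unique geodesic from $0$ to $x$ in the metric tree $\mathcal S$.

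First, I would show that $f$ is well defined on $\mathcal S$ and extends continuously to all of $X$. Since $\gamma(0,x)$ has finite $\nu$-length equal to $d(0,x)$, Hölder's inequality gives
\[
\int_{\gamma(0,x)} |g|\, d\nu \le d(0,x)^{1-1/p}\|g\|_{L^p(\mathcal S,\nu)},
\]
so the integral converges for every $x \in \mathcal S$. For two points $x,y \in \mathcal S$, the tree structure yields a unique median point $z$ such that $\gamma(0,x) = \gamma(0,z)\cup \gamma(z,x)$, $\gamma(0,y) = \gamma(0,z)\cup \gamma(z,y)$ and $\gamma(x,y) = \gamma(x,z)\cup \gamma(z,y)$. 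The common segment $\gamma(0,z)$ cancels, hence
\[
|f(x)-f(y)| \le \int_{\gamma(x,z)} |g|\, d\nu + \int_{\gamma(z,y)} |g|\, d\nu = \int_{\gamma(x,y)} |g|\, d\nu \le d(x,y)^{1-1/p} \|g\|_{L^p(\mathcal S,\nu)}.
\]
Thus $f$ is Hölder continuous on $\mathcal S$, and by density of $\mathcal S$ in $X$ it extends uniquely to a continuous function on $X$ (when $p=\infty$, the analogous argument gives a Lipschitz extension with constant $\|g\|_{L^\infty}$).

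Next, I would verify that $g$ is a weak gradient of $f$. Given any $n \ge 0$ and adjacent vertices $u,v \in V_n$ with $u \le v$, the ordering means precisely that $u$ lies on the geodesic from $0$ to $v$, so that $\gamma(0,v) = \gamma(0,u) \cup \mathbf e(u,v)$. Additivity of the integral then yields
\[
f(v) - f(u) = \int_{\mathbf e(u,v)} g \, d\nu,
\]
which is exactly \eqref{eq:weakGradient}. Since $g \in L^p(\mathcal S,\nu) \subset L^1_{\mathrm{loc}}(\mathcal S,\nu)$, this shows $f \in D^{1,p}(X)$ with $\partial f = g$, and surjectivity follows.

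The only point that really requires care is the Hölder continuity step at the beginning: one must exploit the tree decomposition $\gamma(x,y) = \gamma(x,z) \cup \gamma(z,y)$ at the median point, rather than attempt to compare $\gamma(0,x)$ and $\gamma(0,y)$ directly. Once this is in place, the rest of the argument is essentially tautological, and the surjectivity of $\partial$ reflects nothing more than the fundamental theorem of calculus on the metric tree $\mathcal S$.
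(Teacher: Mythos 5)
Your proposal is correct and follows essentially the same route as the paper: define $f(v)=\int_{\gamma(0,v)}g\,d\nu$, use the tree structure and H\"older's inequality to get the $(1-\tfrac1p)$-H\"older bound $|f(x)-f(y)|\le d(x,y)^{1-1/p}\|g\|_{L^p(\mathcal S,\nu)}$, extend continuously to $X$ by density of $\mathcal S$, and verify the weak-gradient identity on each edge by additivity of the integral along geodesics. The median-point decomposition you spell out is exactly the mechanism the paper invokes implicitly (``as before''), so no further changes are needed.
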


\begin{proof}
Let $\omega \in L^p(\mathcal S,\nu)$. For $v \in \mathcal S$, write
\[
f(v)=\int_{\gamma(0,v)} \omega (x) d\nu(x),
\]
where we recall that $\gamma(0,v)$ denotes the unique geodesic path from $0$ to $v$. As before, one can see that for every $u,v \in \mathcal S$,
\[
| f(v)-f(u)| \le d(u,v)^{1-\frac1p} \left( \int_{\mathcal S} | \omega |^p  d\nu  \right)^{1/p}.
\]
Therefore $f$ admits a unique H\"older continuous extension to $X$ that we still denote by $f$. We have then $f \in D^{1,p}(X)$ and $\partial f=\omega$.
\end{proof}

The following notion of piecewise affine function will play an important role.

\begin{definition}
A  continuous function $\Phi:X \to \mathbb{R}$ is called $n$-piecewise affine, if there exists $n \ge 0$ such that $\Phi$ is piecewise affine on the cable system $\bar{V}_n$ (i.e., linear between the vertices of $\bar{V}_n$)  and constant on any connected component of $\bar{V}_m \setminus \bar{V}_n$ for every $m >n$.  In other words, a continuous function $\Phi:X \to \mathbb{R}$ is  $n$-piecewise affine if
\[
\Phi(x)=\Phi(0) + \int_{\gamma(0,x) }\eta d\nu, \quad x\in \mathcal{S},
\]
where $\eta$ is a function supported in $\bar{V}_n$ and takes constant value on each of the edges of $\bar{V}_n$.
 \end{definition}

We have the following approximation result.

\begin{lemma}\label{approxi L4}
Let $1 \le p < +\infty$. For any $\eta \in L^p(\mathcal S,\nu) \cap L^\infty (\mathcal S,\nu)$, there exists a sequence $\phi_n$ such that:
 \begin{enumerate}
 \item $ \phi_n \in  W^{1,q}(X)$, $p \le  q \le +\infty$;
     \item $\partial \phi_n \to \eta$ in $L^q (\mathcal S,\nu)$, $p < q <+\infty$;
     \item $\| \partial \phi_n \|_{L^q(\mathcal S,\nu)}\le 2 \| \eta \|_{L^\infty(\mathcal S,\nu)} $, $p \le q \le +\infty$.
 \end{enumerate}
\end{lemma}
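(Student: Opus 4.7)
The plan is to build $\phi_n$ in three steps: an edge-averaging of $\eta$, a spatial truncation, and a Lipschitz cutoff that secures $L^q(X,\mu)$-integrability. For each $n$, I would first set
\[
\eta_n|_\mathbf{e} := \frac{1}{\nu(\mathbf e)}\int_\mathbf{e} \eta\,d\nu \qquad \text{on every edge } \mathbf e \in E_n.
\]
Jensen's inequality yields $\|\eta_n\|_{L^\infty(\nu)} \le \|\eta\|_{L^\infty(\nu)}$ and $\|\eta_n\|_{L^q(\nu)} \le \|\eta\|_{L^q(\nu)}$ for every $q$. Since the $\sigma$-algebras generated by the $\bar V_n$-edges refine and generate the $\nu$-measurable $\sigma$-algebra, a standard martingale (Lebesgue differentiation along shrinking edges) argument gives $\eta_n \to \eta$ in $L^q(\mathcal S,\nu)$ for each $p \le q < \infty$, using $\eta \in L^p \cap L^\infty \subset L^q$. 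I then truncate to a growing family of finite unions of $\bar V_n$-edges $K_n$ exhausting $\mathcal S$, set $\tilde\eta_n := \eta_n \mathbbm{1}_{K_n}$ (which still converges to $\eta$ in $L^q$), and define the $n$-piecewise affine function
\[
\tilde\phi_n(v) := \int_{\gamma(0,v)}\tilde\eta_n\,d\nu,
\]
which satisfies $\partial \tilde\phi_n = \tilde\eta_n$ but is generally constant (and nonzero) on the infinitely many branches of $X$ extending beyond $\mathrm{supp}(\tilde\eta_n)$.

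To kill $\tilde\phi_n$ at infinity I multiply by a cutoff $\chi_n$, built piecewise affine on a sufficiently refined cable system, equal to $1$ on a neighborhood of $\mathrm{supp}(\tilde\eta_n)$ and $0$ outside a larger bounded region, with $\partial \chi_n$ supported on only finitely many edges so that $\|\partial \chi_n\|_{L^q(\nu)}$ stays finite. Setting $\phi_n := \chi_n \tilde\phi_n$, the Leibniz rule (Proposition \ref{Chain rule}) yields
\[
\partial \phi_n = \chi_n \tilde\eta_n + \tilde\phi_n\,\partial \chi_n,
\]
and $\phi_n$ is bounded with bounded support in the Ahlfors-regular $X$, so $\phi_n \in L^q(X,\mu) \cap D^{1,\infty}(X) \subset W^{1,q}(X)$ for every $p \le q \le \infty$, giving (i). For (ii), $\chi_n \tilde\eta_n \to \eta$ in $L^q$ by the truncation and averaging above, and the boundary term $\tilde\phi_n\,\partial \chi_n$ can be forced arbitrarily small in $L^q$ by widening the transition band of $\chi_n$. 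For (iii), calibrating $\chi_n$ so that $\|\tilde\phi_n\,\partial \chi_n\|_\infty \le \|\eta\|_\infty$ yields the pointwise bound $|\partial \phi_n| \le 2\|\eta\|_\infty$, which controls all the $L^q$-norms on the bounded support.

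The main obstacle is the design of the cutoff $\chi_n$: because $\nu$ has infinite length in every open ball of $X$, a naive Lipschitz cutoff in the ambient distance would have $L^q(\nu)$-infinite gradient for $q < \infty$. The cutoff must instead be built as a piecewise affine function on a fixed cable-system level with $\partial \chi_n$ supported on a finite transition band of edges, and the length and slope of that band must be calibrated against $\|\tilde\phi_n\|_\infty$ in order to deliver simultaneously the $L^q$-smallness of the boundary error needed for (ii) and the pointwise factor-of-$2$ bound required in (iii).
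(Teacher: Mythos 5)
Your construction is correct in outline and, once the details are filled in, it rests on the same device as the paper's proof: a primitive $\int_{\gamma(0,\cdot)}$ of (a version of) $\eta$, multiplied by a piecewise-affine cutoff whose gradient is carried by a transition band of \emph{finite} $\nu$-length, the Leibniz rule, and a calibration of the slope of the cutoff against the growth of the primitive. The paper's route is leaner: it takes the primitive of $\eta$ itself (no edge-averaging and no truncation --- your martingale step is harmless but serves no purpose for this lemma), controls its growth by H\"older, $\bigl|\int_{\gamma(0,x)}\eta\,d\nu\bigr|\le\|\eta\|_{L^p(\mathcal S,\nu)}\,d(0,x)^{1-\frac1p}$, and uses one explicit family of cutoffs $h_n(x)=h(x/3^n)$ whose gradient has size $\approx 3^{-n}$ and is supported on the four diagonal arms between scales $3^{n-1}$ and $3^n$, a set of $\nu$-measure $\approx 3^n$, see \eqref{eq:partial-hn}. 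The cross term then obeys $\int_{\mathcal S}\bigl|\partial h_n\int_{\gamma(0,\cdot)}\eta\,d\nu\bigr|^q d\nu\lesssim 3^{n(1-\frac qp)}\|\eta\|_{L^p(\mathcal S,\nu)}^q$, which tends to $0$ for $q>p$ and is bounded for $q=p$, while the linear bound $d(0,x)\|\eta\|_\infty$ on the primitive gives the pointwise factor $2\|\eta\|_\infty$. This $h_n$ is precisely the cutoff you flag as ``the main obstacle'': although every ball has infinite $\nu$-measure, the band separating a central region from infinity can be taken inside the main cross, hence of finite length, so the $\chi_n$ you postulate does exist. What your write-up leaves open is exactly this explicit construction, together with the calibration making the boundary term $\tilde\phi_n\,\partial\chi_n$ small in $L^q$ for \emph{all} $q>p$ simultaneously along a single sequence; because you truncate $\tilde\eta_n$ first, your primitive is only bounded by $\|\eta\|_\infty R_n$ ($R_n$ the radius of $K_n$), so the cutoff scale must grow much faster than $R_n$ (or one interpolates an $L^{q_n}$-smallness with $q_n\downarrow p$ against the uniform bound $2\|\eta\|_\infty$). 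The paper's H\"older bound on the untruncated primitive avoids this bookkeeping altogether.

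One caveat on (iii): your closing claim that the pointwise bound ``controls all the $L^q$-norms on the bounded support'' yields finiteness of those norms, not the literal inequality $\|\partial\phi_n\|_{L^q(\mathcal S,\nu)}\le 2\|\eta\|_{L^\infty(\mathcal S,\nu)}$ for finite $q$, since the support may have $\nu$-measure much larger than $1$. The paper's own argument likewise only establishes the a.e.\ (i.e.\ $q=\infty$) bound together with $L^q$-boundedness at $q=p$, so this is a defect of the statement rather than of your proof, but you should not present the finite-$q$ case of (iii) as following from the pointwise estimate.
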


\begin{proof}
We first construct a convenient family of cutoff functions. Consider the unique 1-piecewise affine function $h$ on $X$ which is equal to 1 on the vertices $\frac{1}{3} W_0$ and zero on all of the other vertices of $V_1$. Let
\[
h_n(x)=h ( x/3^n), \, x \in X.
\]
We have then for every $x \in X$,  $ h_n(x) \to 1$ when $n \to \infty$ and moreover 
\begin{equation}\label{eq:partial-hn}
| \partial h_n (x)| \le 3^{-n} 1_{3^n\bar{W}_0\setminus 3^{n-1}\bar{W}_0 }(x).
\end{equation}
For $\eta \in L^p(\mathcal S,\nu)\cap L^\infty (\mathcal S,\nu)$, we define
\[
\phi_n(x) =h_n (x)\int_{\gamma (0,x)} \eta d\nu, \quad x \in \mathcal S.
\]
By continuity, $\phi_n$ admits an extension to $X$, which is still denoted by $\phi_n$. Since $\phi_n$ is continuous and compactly supported, we have $\phi_n \in L^q(X,\mu)$ for $p \le q \le +\infty$. 

Next we observe that for $\nu$ a.e. $x \in \mathcal S$
\begin{align}\label{eq:partial phi}
\partial \phi_n(x) =\partial h_n (x)\int_{\gamma (0,x)} \eta d\nu+ h_n(x) \eta (x).
\end{align}
Since $\eta \in L^q(\mathcal S,\nu)$ for $q \ge p$, it is easy to see that $h_n \eta \to \eta$ in $L^q(\mathcal{S},\nu)$ when $q \ge p$. On the other hand,  we have from \eqref{eq:partial-hn} and H\"older's inequality that
\begin{align*}
\int_\mathcal{S} \left|\partial h_n (x)\int_{\gamma (0,x)} \eta d\nu \right|^q d\nu(x)  
&\le \int_\mathcal{S} \left(3^{-n} 1_{3^n\bar{W}_0\setminus 3^{n-1}\bar{W}_0 }(x)\right)^q \left(\|\eta \|_{L^p(\mathcal S,\nu)}  d(0,x)^{1-\frac{1}{p}}\right)^qd\nu(x)  
\\ &\le 3^{-nq} \|\eta \|_{L^p(\mathcal S,\nu)}^q \int_\mathcal{S} 1_{3^n\bar{W}_0\setminus 3^{n-1}\bar{W}_0 }(x)d(0,x)^{q-\frac{q}{p}}d\nu(x)
\\ & \le 3^{-nq} \|\eta \|_{L^p(\mathcal S,\nu)}^q 3^{n\left( q-\frac{q}{p}\right)} \nu(3^n\bar{W}_0\setminus 3^{n-1}\bar{W}_0)
\\ & \le 3^{n\left( 1-\frac{q}{p}\right)} \|\eta \|_{L^p(\mathcal S,\nu)}^q.
\end{align*}
Hence the integral is bounded for $p=q$, and converges to zero when $n\to +\infty$ for $p<q<\infty$. 
We deduce that $\partial \phi_n \to \eta$ in $L^q (\mathcal S,\nu)$ for $p < q <+\infty$, and $\partial \phi_n$ is bounded in $L^p (\mathcal S,\nu)$. 
Finally, we have $\nu$ a.e. $x\in \mathcal S$
\begin{align*}
|\partial \phi_n(x) | & \le |\partial h_n (x) | \, \left|\int_{\gamma (0,x)} \eta d\nu \right|+ h_n(x) |\eta (x) | \\
 & \le 3^{-n} 1_{3^n\bar{W}_0\setminus 3^{n-1}\bar{W}_0 } (x)d(0,x) \| \eta \|_{L^\infty(\mathcal S,\nu)} +\| \eta \|_{L^\infty(\mathcal S,\nu)} \\
 & \le 2 \| \eta \|_{L^\infty(\mathcal S,\nu)}.
\end{align*}
\end{proof}

\begin{proposition}\label{dense range}
Let $1 < p < +\infty$. The range of the operator $\partial: W^{1,p}(X) \to L^p(\mathcal S, \nu)$ is dense in $L^p(\mathcal S, \nu)$.
\end{proposition}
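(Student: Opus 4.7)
The plan is to combine Lemma \ref{approxi L4} with a density reduction, where the key point is to apply the lemma with its free exponent equal to $1$ rather than to our target $p$. Indeed, if one applies Lemma \ref{approxi L4} with its exponent equal to $p$, conclusion (ii) only yields $L^q$-convergence for $q > p$, missing the endpoint $q = p$; but applying it with exponent $1$ instead yields convergence in $L^q$ for every $q > 1$, and in particular for $q = p$.

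To set up this reduction, first observe that bounded, finite-support functions are dense in $L^p(\mathcal S, \nu)$. Since $\nu$ is $\sigma$-finite on the natural edge $\sigma$-field (as recalled in the preliminaries), there is an increasing sequence of finite unions of edges $A_k \subset \mathcal S$ with $\nu(A_k) < +\infty$ and $\bigcup_{k} A_k = \mathcal S$. For $\eta \in L^p(\mathcal S, \nu)$, the truncation
\[
\eta_k := \eta \cdot \mathbf 1_{A_k} \cdot \mathbf 1_{\{|\eta| \le k\}}
\]
lies in $L^1(\mathcal S, \nu) \cap L^\infty(\mathcal S, \nu)$ and satisfies $\eta_k \to \eta$ in $L^p(\mathcal S, \nu)$ by dominated convergence.

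For each such $\eta_k$, I apply Lemma \ref{approxi L4} with its free exponent equal to $1$. Item (i) produces a sequence $(\phi_n^{(k)})_n$ in $W^{1,q}(X)$ for every $q \in [1, +\infty]$, in particular in $W^{1,p}(X)$. Item (ii), specialized to $q = p > 1$, gives $\partial \phi_n^{(k)} \to \eta_k$ in $L^p(\mathcal S, \nu)$ as $n \to \infty$. A standard diagonal extraction then produces a sequence in $W^{1,p}(X)$ whose gradients converge to $\eta$ in $L^p(\mathcal S, \nu)$, proving density of the range.

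The only point meriting verification is that the quantitative estimate used in the proof of Lemma \ref{approxi L4} remains effective at the endpoint exponent $1$. With that choice, the controlling factor $3^{n(1-q/p)}$ appearing there becomes $3^{n(1-q)}$, which tends to $0$ for every $q > 1$, exactly what is needed. Past this bookkeeping the argument is essentially routine, and the main conceptual subtlety is recognizing that one should not apply the lemma at its natural exponent $p$ but at $1$, trading the smaller admissible class $L^1 \cap L^\infty$ (reached via the density step) for convergence at the desired $q = p$.
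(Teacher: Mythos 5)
Your proof is correct and is essentially the paper's own argument: the paper likewise reduces to $L^1(\mathcal S,\nu)\cap L^\infty(\mathcal S,\nu)$ by density in $L^p(\mathcal S,\nu)$ and then invokes Lemma \ref{approxi L4}(ii) with the lemma's exponent taken to be $1$ so that the convergence in item (ii) holds at $q=p>1$. Your write-up merely makes explicit the truncation and diagonal steps that the paper leaves implicit.
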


\begin{proof}
Let  $1 < p < +\infty$. Since $L^1(\mathcal S,\nu)\cap L^\infty(\mathcal S,\nu)$ is dense in $L^p(\mathcal S,\nu)$, it is enough to prove that the range of $\partial$ is $L^p$ dense in $L^1(\mathcal S,\nu)\cap L^\infty(\mathcal S,\nu)$, which immediately follows from Lemma \ref{approxi L4} $(\mathrm{ii})$.
\end{proof}

\section{Heat kernel gradient  bounds}

\subsection{Dirichlet form, Heat kernel}

We now introduce the canonical Dirichlet form on $X$ and recall some of the basic properties of its associated heat kernel with respect to the measure $\mu$. For $f,g \in W^{1,2}(X)$, we define
\[
\mathcal{E}(f,g)=\int_{\mathcal S} \partial f \,  \partial g \, d\nu.
\]
We note that if $\Phi:X \to \mathbb{R}$ is  $n$-piecewise affine, then for every $m>n$
\[
\mathcal{E}(\Phi,\Phi)=\frac{1}{2} 3^{m}  \sum_{x,y \in V_m, x \sim y} |\Phi(x)-\Phi(y)|^2.
\]
Using then known result about Dirichlet forms as limits of self-similar energies on infinite nested fractals as in \cite[Theorem 7.14]{Barlow} and \cite[Theorem 2.7]{FHK} (see also Theorem 2.9 in \cite{BC}), we have the following result.

\begin{theorem}
The quadratic form $\mathcal{E}$ with domain $W^{1,2}(X)$ is a strongly local and regular Dirichlet form in $L^2(X,\mu)$. A core for $\mathcal{E}$ is the set of compactly supported piecewise affine functions. Moreover, for every $f \in W^{1,2}(X)$
\[
\mathcal{E}(f,f)=\lim_{m \to +\infty} \frac{1}{2} 3^{m}  \sum_{x,y \in V_m, x \sim y} |f(x)-f(y)|^2.
\]
\end{theorem}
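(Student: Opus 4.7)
The plan is to verify the three defining properties of a Dirichlet form (symmetry, closedness, Markov property) directly using the weak gradient calculus developed in Section~2, then establish regularity and strong locality, and finally derive the discrete limit formula by combining a direct computation on piecewise affine functions with an approximation argument.

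Symmetry of $\mathcal{E}$ is immediate from the definition. Closedness follows from Proposition~\ref{proposition closed} applied with $p=2$: if $f_n \to f$ in $L^2(X,\mu)$ and $\partial f_n$ is Cauchy in $L^2(\mathcal S,\nu)$ with limit $g$, then $f \in W^{1,2}(X)$ and $g = \partial f$. The Markov property follows from Proposition~\ref{Chain rule}(i): for a smooth normal contraction $\Phi$, the chain rule gives $|\partial \Phi(f)| = |\Phi'(f)||\partial f| \le |\partial f|$ $\nu$-a.e., so $\mathcal E(\Phi(f),\Phi(f)) \le \mathcal E(f,f)$, and the general Lipschitz case follows by smooth approximation combined with closedness. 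Strong locality follows from the tree structure: if $g$ is constant on an open neighborhood $U$ of $\mathrm{supp}(f)$, then on each edge $\mathbf e$ either $\mathbf e \subset U$ (so $\partial g \equiv 0$ on $\mathbf e$) or $\mathbf e \cap \mathrm{supp}(f) = \varnothing$ (so $\partial f \equiv 0$ on $\mathbf e$), hence $\int_{\mathcal S} \partial f \, \partial g \, d\nu = 0$.

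For regularity we must show that the compactly supported piecewise affine functions are dense in $C_c(X)$ in sup norm and in $W^{1,2}(X)$ in form norm. Uniform density on each compact piece $3^m K$ is a consequence of Stone--Weierstrass, since such functions separate points and form a $\vee,\wedge$-stable algebra containing the constants. For form-norm density, given $f \in W^{1,2}(X)$ we first cut off using the functions $h_n$ from Lemma~\ref{approxi L4} to reduce to a compactly supported $g \in W^{1,2}(X)$. Replacing $g$ by its piecewise affine interpolant $g_m$ on $\bar V_m$, a direct computation shows that $\partial g_m$ is the constant $3^m(g(v)-g(u))$ on each edge $\mathbf e(u,v) \subset \bar V_m$, which equals the $\nu$-average of $\partial g$ over that edge. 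Thus $\partial g_m$ is the conditional expectation of $\partial g$ with respect to the $\sigma$-algebra generated by the edges of $\bar V_m$, and $L^2$-martingale convergence yields $\partial g_m \to \partial g$ in $L^2(\mathcal S, \nu)$.

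The discrete energy formula is the most delicate ingredient, and constitutes the main obstacle. For an $n$-piecewise affine $\Phi$ and any $m \ge n$, the identity $\mathcal E(\Phi,\Phi) = \frac{1}{2} 3^m \sum_{x \sim y,\, x,y \in V_m} |\Phi(x)-\Phi(y)|^2$ is proved by direct computation: on each edge $\mathbf e(u,v)$ of $\bar V_m$, $\Phi$ is affine so $|\partial \Phi|$ is the constant $3^m|\Phi(v)-\Phi(u)|$, and $\int_{\mathbf e(u,v)} |\partial \Phi|^2 \, d\nu = 3^m|\Phi(v)-\Phi(u)|^2$. Extending this to arbitrary $f \in W^{1,2}(X)$ requires invoking the general theory of self-similar Dirichlet forms on infinite nested fractals from \cite{Barlow,FHK} (see also \cite{BC}), which constructs the self-similar form $(\mathcal E^{\mathrm{ss}}, \mathcal F^{\mathrm{ss}})$ as the monotone limit of the discrete quadratic forms on the right-hand side with the resistance renormalization $3^m$ per level (consistent with $d_w = \log 15/\log 3$). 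To identify $\mathcal E^{\mathrm{ss}} = \mathcal E$ on $W^{1,2}(X)$, one checks that both closed Dirichlet forms coincide on the common core of compactly supported piecewise affine functions, where both reduce to the explicit discrete sum; uniqueness of the closure on a common core then yields both the equality of the forms and the discrete limit formula in full generality.
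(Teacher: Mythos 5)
Your proposal is correct in outline but takes a substantially more explicit route than the paper, which gives essentially no proof: it simply invokes the known construction of self-similar Dirichlet forms on infinite nested fractals (\cite[Theorem 7.14]{Barlow}, \cite[Theorem 2.7]{FHK}, and Theorem 2.9 of \cite{BC}) and records the statement as a consequence. What you add is the direct verification, from the weak-gradient calculus of Section 2, that $(\mathcal E, W^{1,2}(X))$ is a closed Markovian form (closedness from Proposition \ref{proposition closed} with $p=2$, the Markov property from the chain rule plus closedness), together with the identification of $\mathcal E$ with the self-similar form through the common core of compactly supported piecewise affine functions. This is essentially the content of the cited Theorem 2.9 of \cite{BC}, so your argument reconstructs rather than bypasses the reference, and the existence of the monotone self-similar limit is still imported from the literature; what it buys is a self-contained justification of the discrete energy formula that the paper leaves implicit.

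Three steps are stated imprecisely, though each is easily repaired. First, the piecewise affine functions are \emph{not} an algebra (products are quadratic on edges), and they are not stable under $\vee,\wedge$ either, since the crossing point of two affine pieces on an edge need not be a vertex of any $\bar{V}_k$; uniform density in $C_c(X)$ should instead be obtained by taking the level-$m$ interpolant of $f$ and using uniform continuity. Second, $\partial g_m$ vanishes on $\mathcal S\setminus \bar{V}_m$, where the conditional expectation of $\partial g$ given the components of that set need not vanish; the martingale statement is correct only if you interpret $\partial g_m$ as the orthogonal projection in $L^2(\mathcal S,\nu)$ onto functions measurable with respect to the $\sigma$-algebra whose atoms are the edges of $\bar{V}_m$ together with the single infinite-measure atom $\mathcal S\setminus\bar{V}_m$ (on which any $L^2$ function measurable for that $\sigma$-algebra must vanish); since these projections increase to the identity, the $L^2$ convergence does follow. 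Third, for strong locality the alternative ``$\mathbf e\subset U$ or $\mathbf e\cap\mathrm{supp}(f)=\varnothing$'' is not a genuine dichotomy for a fixed edge; argue pointwise instead: $\nu$-a.e.\ point of $\mathcal S$ lies either in the open set $U$, where $\partial g=0$ a.e., or outside $\mathrm{supp}(f)$, where $\partial f=0$ a.e., so $\partial f\,\partial g=0$ $\nu$-a.e. None of these affects the validity of the overall scheme.
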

%

Let $\Delta $ be the generator of $(\mathcal E,W^{1,2}(X))$  in $L^2(X, \mu)$, i.e., for $f\in \mathrm{dom}(\Delta)$ and $g \in W^{1,2}(X)$,
\[
\int_\mathcal{S} \partial f \partial g d\nu=-\int_X g \Delta f d\mu.
\]
The associated heat semigroup $(P_t)_{t\ge 0}=(e^{t\Delta})_{t\ge 0}$ admits a continuous heat kernel with respect to the Hausdorff measure $\mu$ that we denote by $p_t(x,y)$. The  heat kernel $p_t(x,y)$  satisfies sub-Gaussian estimates, see \cite[Theorem 8.18]{Barlow} and \cite[Theorem 1]{FHK}. More precisely, for every $(x,y)\in X\times X$ and $t >0$
\begin{equation}\label{eq:sub-Gaussian1}
c_1t^{-\frac{d_h}{d_w}}\exp\biggl(-c_2\Bigl(\frac{d(x,y)^{d_{w}}}{t}\Bigr)^{\frac{1}{d_{w}-1}}\biggr) \le p_t(x,y) 
\le  c_3 t^{-\frac{d_h}{d_w}}\exp\biggl(-c_4\Bigl(\frac{d(x,y)^{d_{w}}}{t}\Bigr)^{\frac{1}{d_{w}-1}}\biggr),
\end{equation}
where $d_w:=d_h+1$ is often called the walk dimension.
Those sub-Gaussian estimates easily imply (see Lemma 2.3 in \cite{ABCRST3}) that  for every $\kappa \ge 0$, there are constants $c,C>0$ so that for every $x,y \in X$, and $t >0$
\begin{equation}\label{eq:boundpolyptbypct}
 d(x,y)^\kappa p_t(x,y)\leq C t^{\frac{\kappa}{d_w}} p_{ct}(x,y).
\end{equation}
%

For later use we record the following bound which classically follows from the heat kernel estimates, see for instance \cite[Corollary 5]{MR1423289}.

\begin{lemma}\label{sub_time}
There exist constants $c,C>0$ such that for every $t >0$ and $x,y \in X$,
\begin{equation}\label{eq:sub-Gaussian2}
|\partial_t p_t(x,y) |
\le C  t^{-1-\frac{d_h}{d_w}}\exp\biggl(-c\Bigl(\frac{d(x,y)^{d_{w}}}{t}\Bigr)^{\frac{1}{d_{w}-1}}\biggr).
\end{equation}
\end{lemma}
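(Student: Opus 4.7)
The plan is to derive \eqref{eq:sub-Gaussian2} from the upper bound in \eqref{eq:sub-Gaussian1} by exploiting the analyticity of the heat semigroup together with Cauchy's integral formula, exactly as in the classical argument of Hebisch--Saloff-Coste \cite{MR1423289}. Since $-\Delta$ is nonnegative self-adjoint on $L^2(X,\mu)$, the spectral theorem shows that $P_t=e^{t\Delta}$ extends to a bounded holomorphic semigroup $(P_z)_{\mathrm{Re}\,z>0}$ on the right half-plane, with $\|\Delta P_z\|_{2\to 2}\le (e\,\mathrm{Re}\,z)^{-1}$.

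The first step is to promote the real-time upper bound \eqref{eq:sub-Gaussian1} into a complex-time sub-Gaussian estimate: for any fixed $\theta_0\in(0,\pi/2)$ and every $z$ with $|\arg z|\le\theta_0$, the operator $P_z$ admits a jointly continuous kernel $p_z(x,y)$ satisfying
\[
|p_z(x,y)| \le C_{\theta_0}\,(\mathrm{Re}\,z)^{-d_h/d_w}\exp\!\left(-c_{\theta_0}\Bigl(\tfrac{d(x,y)^{d_w}}{|z|}\Bigr)^{1/(d_w-1)}\right).
\]
This is the content of \cite[Corollary~5]{MR1423289} in the present Dirichlet-form setting and requires only the on-diagonal bound $p_t(x,x)\le Ct^{-d_h/d_w}$ together with a Gaffney-type integrated off-diagonal decay, both of which are immediate consequences of \eqref{eq:sub-Gaussian1}.

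The second step is to apply Cauchy's integral formula on a small circle around $t$: fixing $\rho\in(0,1/2)$ and integrating along $|w-t|=\rho t$,
\[
\partial_t p_t(x,y) \;=\; \frac{1}{2\pi i}\oint_{|w-t|=\rho t}\frac{p_w(x,y)}{(w-t)^2}\,dw,
\]
so that $|\partial_t p_t(x,y)| \le (\rho t)^{-1}\sup_{|w-t|=\rho t}|p_w(x,y)|$. On this circle one has $\mathrm{Re}\,w\ge(1-\rho)t$ and $|w|\le(1+\rho)t$, and inserting these into the complex-time bound yields \eqref{eq:sub-Gaussian2} after absorbing $\rho$-dependent factors into $c$ and $C$.

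The main technical point is justifying the complex-time sub-Gaussian upper bound without a carr\'e du champ to run Davies' perturbation argument directly. This is precisely why invoking \cite{MR1423289} is convenient: the argument there is purely abstract and needs only the real-time Dirichlet-form upper bound \eqref{eq:sub-Gaussian1}, so it transfers verbatim to the Vicsek set despite the absence of a pointwise energy measure. Everything else is a routine contour-integral computation.
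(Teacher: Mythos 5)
Your proposal is correct and follows exactly the route the paper takes: the paper gives no proof of Lemma \ref{sub_time} beyond citing \cite[Corollary 5]{MR1423289}, and your argument is precisely the content of that citation (complex-time extension of the sub-Gaussian upper bound via analyticity of the semigroup, followed by Cauchy's integral formula on a circle of radius comparable to $t$). Nothing further is needed.
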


\subsection{Heat kernel  Lipschitz estimates and gradient bounds}

We start with the following Lipschitz estimate for the heat kernel.

 \begin{theorem}\label{Lipschitz pt}
   There exist constants $c,C>0$ such that for every $x,y,z \in X$, and $t>0$
 \begin{align}\label{eq:pkl}
 | p_t(z,x)-p_t(z,y)| \le  C \frac{d(x,y)}{t^{1/d_w}}  ( p_{ct}(z,x)+p_{ct}(z,y)) .
 \end{align}
 \end{theorem}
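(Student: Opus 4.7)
The plan is to split on the magnitude of $d(x,y)/t^{1/d_w}$ and tackle the two regimes separately.

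If $d(x,y) \geq t^{1/d_w}$, the inequality is essentially free. By the triangle inequality
\[
|p_t(z,x)-p_t(z,y)| \le p_t(z,x)+p_t(z,y),
\]
and the sub-Gaussian upper bound \eqref{eq:sub-Gaussian1} lets us replace $p_t$ by $C p_{ct}$ at the cost of constants (this is the standard mechanism used implicitly in \eqref{eq:boundpolyptbypct}). Since $d(x,y)/t^{1/d_w}\ge 1$ in this regime, the claim holds.

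The substantial case is $d(x,y)<t^{1/d_w}$. Here I would use the semigroup identity
\[
p_t(z,\cdot)=P_{t/2}\bigl(p_{t/2}(z,\cdot)\bigr),
\]
so that $p_t(z,x)-p_t(z,y)=(P_{t/2}f)(x)-(P_{t/2}f)(y)$ with $f=p_{t/2}(z,\cdot)$. The weak Bakry--\'Emery estimate of \cite{ABCRST3} gives a Lipschitz bound of the form $|P_{s}g(x)-P_{s}g(y)|\le C\,s^{-1/d_w}\,d(x,y)\,\|g\|_\infty$, but a naive application to $f$ only yields the factor $\|f\|_\infty\lesssim t^{-d_h/d_w}$, which lacks the spatial decay we need. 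The improvement is to \emph{localize} before applying weak BE: decompose
\[
f=f_{\mathrm{near}}+f_{\mathrm{far}}, \qquad f_{\mathrm{near}}=p_{t/2}(z,\cdot)\,\chi,
\]
where $\chi$ is a cutoff supported in $B(x_0,At^{1/d_w})$ for some midpoint $x_0$ on $\gamma(x,y)$ and $A$ a large absolute constant. Applying weak BE to $f_{\mathrm{near}}$ yields
\[
|P_{t/2}f_{\mathrm{near}}(x)-P_{t/2}f_{\mathrm{near}}(y)|\le C\frac{d(x,y)}{t^{1/d_w}}\|f_{\mathrm{near}}\|_\infty,
\]
and the sub-Gaussian upper bound on the support of $\chi$ gives $\|f_{\mathrm{near}}\|_\infty\le C\bigl(p_{ct}(z,x)+p_{ct}(z,y)\bigr)$, since every point in that support lies within distance $\lesssim t^{1/d_w}$ of both $x$ and $y$ (so the sub-Gaussian exponent sees essentially the same decay as at $x$ or $y$).

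The main obstacle, as I see it, is controlling the far part $f_{\mathrm{far}}$, whose $L^\infty$-norm still carries the full $t^{-d_h/d_w}$ prefactor. My preferred route is a dyadic annular decomposition $f_{\mathrm{far}}=\sum_{k\ge 1} f_k$ with $f_k$ supported in $\{2^k A t^{1/d_w}\le d(x_0,\cdot)<2^{k+1} A t^{1/d_w}\}$, where $\|f_k\|_\infty$ inherits a factor $\exp\bigl(-c 2^{k d_w/(d_w-1)}\bigr)$ from the sub-Gaussian bound on $p_{t/2}(z,\cdot)$, \emph{but} applied weak BE must be combined with the heat-kernel tail estimate for $P_{t/2}$ (for which $f_k$ is supported at distance $\gtrsim 2^k t^{1/d_w}$ from $x$ and $y$), so each term contributes a further exponentially small factor in $2^k$. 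Summing over $k$ produces only a harmless multiplicative constant while preserving the exponential decay relative to $d(z,x)$ and $d(z,y)$ carried by $p_{ct}$. An alternative, and potentially cleaner, route is a Davies-type exponential reweighting of the semigroup adapted to the weak gradient $\partial$, but the dyadic approach is more robust against the absence of a carr\'e du champ. Combining the near and far contributions delivers the desired bound, completing Case 2.
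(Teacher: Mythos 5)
Your Case 1 and the ``near'' half of Case 2 are fine, but the far-field part of Case 2 has a genuine gap. The weak Bakry--\'Emery estimate of \cite{ABCRST3} that you invoke has the form $|P_s g(x)-P_s g(y)|\le C s^{-1/d_w} d(x,y)\,\|g\|_\infty$: the right-hand side sees only $\|g\|_\infty$ and is completely insensitive to where $g$ is supported. Applied to $f_k$ it therefore yields $C\,t^{-1/d_w}d(x,y)\|f_k\|_\infty$ with \emph{no} gain in $k$, and note that $\|f_k\|_\infty$ itself need not decay in $k$ (if $z$ lies in the $k$-th annulus, $\|f_k\|_\infty\simeq t^{-d_h/d_w}$), so all the decay in $d(z,x)$ must come from the tail of $P_{t/2}$ acting on $f_k$. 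The only other bound available is the triangle-inequality one, $|P_{t/2}f_k(x)|+|P_{t/2}f_k(y)|\le C\|f_k\|_\infty e^{-c\,2^{k d_w/(d_w-1)}}$, which carries the exponential smallness but not the factor $d(x,y)/t^{1/d_w}$. There is no way to combine these two estimates so as to retain \emph{both} the full linear factor $d(x,y)/t^{1/d_w}$ and the exponential decay: taking the minimum and interpolating gives at best $\bigl(d(x,y)/t^{1/d_w}\bigr)^{\theta}$ with $\theta<1$. What your argument actually needs is a \emph{localized} weak Bakry--\'Emery estimate of the form $|P_s g(x)-P_s g(y)|\le C s^{-1/d_w} d(x,y)\bigl(P_{cs}|g|(x)+P_{cs}|g|(y)\bigr)$ --- but that is precisely Corollary \ref{LipP_t}, which is deduced \emph{from} Theorem \ref{Lipschitz pt}. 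So the far-part step is circular as written.

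The paper avoids this by working with the resolvent rather than the semigroup: the key external input is the Lipschitz estimate for the resolvent kernel from \cite{Barlow}, $|g_{\lambda}(z,x)-g_{\lambda}(z,y)|\le C d(x,y)\,(p_{\lambda}(z,x)+p_{\lambda}(z,y))$ with $p_\lambda(x,y)=g_\lambda(x,y)/g_\lambda(y,y)$, which already packages the Lipschitz factor \emph{together with} the spatial localization. One then writes $p_t(z,\cdot)=G_\lambda(\Delta-\lambda)p_t(z,\cdot)$, controls $(\Delta-\lambda)p_t(z,\cdot)$ via the time-derivative bound of Lemma \ref{sub_time} and the semigroup property, and optimizes over $\lambda\simeq (d(x,z)/t)^{d_w/(d_w-1)}$. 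If you want to salvage a semigroup-based proof, you would need to import an input of comparable strength (e.g.\ the Green function Lipschitz bound, or a genuinely weighted weak BE estimate), not the unweighted one from \cite{ABCRST3}.
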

 
 \begin{proof}

 Denote by 
 
\[ 
g_\lambda (x,y)=\int_0^{+\infty} e^{-\lambda t} p_t (x,y) dt, \quad \lambda >0,
\]
the resolvent kernel of $\Delta$, i.e.,
 \[
 G_\lambda f (x)=(\Delta-\lambda)^{-1} f (x)=\int_X g_{\lambda} (x,y) f(y) d\mu(y), \quad f \in L^2(X,\mu).
 \]
From the proof of \cite[Theorem 3.40]{Barlow} (see also \cite[Theorem 5.20]{BP}), one has the following estimate
\[
| g_{\lambda} (z,x)-g_{\lambda} (z,y)| \le C d(x,y) (p_{\lambda} (z,x)+p_{\lambda} (z,y)),
\]
where $p_{\lambda} (x,y):=\frac{g_\lambda(x,y)}{g_\lambda(y,y)}$ and $g_{\lambda}(y,y)\simeq\lambda^{\frac{d_h}{d_w}-1}$, thanks to \cite[Proposition 3.28 (a)]{Barlow}. We conclude that 
\[
| g_{\lambda} (z,x)-g_{\lambda} (z,y)| \le C\lambda^{1-\frac{d_h}{d_w}} d(x,y) (g_{\lambda} (z,x)+g_{\lambda} (z,y)).
\]
Integrating the estimate yields that for $f \in L^2(X,\mu)$
 \[
 | G_\lambda f (x)-G_\lambda f (y)|\le C \lambda^{1-\frac{d_h}{d_w}} d(x,y)(G_\lambda |f| (x)+G_\lambda |f| (y)).
 \]
 Thus, for any $f \in \mathrm{dom} (\Delta)$ and $\lambda >0$
 \[
 |  f (x)- f (y)|\le C \lambda^{1-\frac{d_h}{d_w}} d(x,y)(G_\lambda |(\Delta-\lambda) f| (x)+G_\lambda |(\Delta-\lambda) f| (y)).
 \]
 Now applying this inequality with $f=p_t(z,\cdot)$ gives
 \[
 |  p_t(z,x)- p_t(z,y)|\le C \lambda^{1-\frac{d_h}{d_w}} d(x,y)(G_\lambda |(\Delta-\lambda) p_t(z,\cdot)| (x)+G_\lambda |(\Delta-\lambda) p_t(z,\cdot)| (y)).
 \]
 We then have from Lemma \ref{sub_time}
 \begin{align*}
 G_\lambda |(\Delta-\lambda) p_t(z,\cdot)| (x)&=\int_X g_\lambda (x,u)|(\Delta-\lambda) p_t(z,u)| d\mu(u) \\
   &\le  C \left(\lambda+ \frac{1}{t} \right)\int_X g_\lambda (x,u)p_{c_1t}(z,u) d\mu(u).
 \end{align*}
By the semigroup property of the heat kernel, one has then
 \begin{align*}
 \int_X g_\lambda (x,u)p_{c_1t}(z,u) d\mu(u)&=\int_0^{+\infty}\int_X e^{-\lambda s} p_s(x,u) p_{c_1t}(z,u) d\mu(u) ds \\
  &=\int_0^{+\infty}e^{-\lambda s} p_{s+c_1t}(x,z) ds \\
   &\le e^{c_1\lambda  t} \int_{0}^{+\infty} e^{-\lambda u} p_u (x,z) du
\\ &    \le C \lambda^{\frac{d_h}{d_w}-1} e^{c_1
 \lambda t-c_2 \lambda^{1/d_w} d(x,z)},
 \end{align*}
 where the last inequality follows from the upper bound in \eqref{eq:sub-Gaussian1} by integrating with respect to $u$ (see \cite[Proposition 3.28]{Barlow}).
We conclude
 \[
 | p_t(z,x)-p_t(z,y)|\le C d(x,y) \left(\lambda+ \frac{1}{t} \right)\left( e^{c_1
 \lambda t-c_2 \lambda^{1/d_w} d(x,z)} +e^{c_1
 \lambda t-c_2 \lambda^{1/d_w} d(y,z)} \right).
 \]
  
%
%
%

Assume first $d(y,z) \ge d(x,z)$. One has then
\[
| p_t(z,x)-p_t(z,y)|\le C d(x,y) \left(\lambda+ \frac{1}{t} \right)e^{c_1
 \lambda t-c_2 \lambda^{1/d_w} d(x,z)}.
\]
If we choose $\lambda=  \alpha\left(\frac{d(x,z)}{t} \right)^{\frac{d_w}{d_w-1}}$ with a constant $\alpha>0$ small enough such that $c_2\alpha^{\frac1{d_w}}-c_1\alpha>0$ (such difference is denoted by $c_3$),
one obtains
\begin{align*}
| p_t(z,x)-p_t(z,y)| &\le C d(x,y) \left(\left(\frac{d(x,z)}{t} \right)^{\frac{d_w}{d_w-1}}+ \frac{1}{t} \right)\exp\left(-c_3\Bigl(\frac{d(x,z)^{d_w}}{t}\Bigr)^{\frac{1}{d_w-1}} \right)\\
  & \le \frac{C}{t} d(x,y)\exp\left(-c_4\Bigl(\frac{d(x,z)^{d_w}}{t}\Bigr)^{\frac{1}{d_w-1}} \right) \\
  & \le \frac{C}{t^{1-d_h/d_w}} d(x,y) p_{ct}(x,z),
\end{align*}
where we used the fact that
\[
d(x,z)^{\frac{d_w}{d_w-1}} \exp\left(-c_3\Bigl(\frac{d(x,z)^{d_w}}{t}\Bigr)^{\frac{1}{d_w-1}} \right)
\le C t^{\frac{1}{d_w-1}}\exp\left(-c_4\Bigl(\frac{d(x,z)^{d_w}}{t}\Bigr)^{\frac{1}{d_w-1}} \right).
\]
Similarly, if $d(y,z) \ge d(x,z)$, one obtains
\[
| p_t(z,x)-p_t(z,y)| \le \frac{C}{t^{1-d_h/d_w}} d(x,y) p_{ct}(y,z)
\]
and the conclusion follows.
 \end{proof}

 As a corollary, we  get an improved weak Bakry-\'Emery estimate (compared to \cite[Theorem 3.7]{ABCRST3}).
 
 \begin{corollary}\label{LipP_t}
 There exist constants $c,C>0$ such that for every $f \in L^p(X,\mu)+ L^q(X,\mu)$, $1 \le p,q \le +\infty$, $x,y \in X$ and $t>0$
 \[
 | P_t f (x)-P_tf(y)| \le C\frac{d(x,y)}{t^{1/d_w}}  ( P_{ct}|f|(x)+P_{ct}|f|(y)) .
 \]
 Therefore, for every $f \in  L^p(X,\mu)+ L^q(X,\mu)$, $1 \le p,q \le +\infty$ and $t>0$, $P_tf \in W^{1,\infty}(X)$. Moreover, there exist constants $c,C>0$ such that for every $f \in L^p(X,\mu)+ L^q(X,\mu)$, $1 \le p,q \le +\infty$, $\nu$ a.e. $x \in \mathcal S$ and every  $t>0$
 \begin{align}\label{GB-semigroup}
 | \partial P_t f (x)| \le    \frac{C}{t^{1/d_w}}  P_{ct}|f|(x) .
 \end{align}

 \end{corollary}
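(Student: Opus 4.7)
My plan is to deduce all three assertions directly from the heat kernel Lipschitz estimate \eqref{eq:pkl} of Theorem \ref{Lipschitz pt}, using the symmetry $p_t(x,z)=p_t(z,x)$ and the weak gradient definition on edges of $\mathcal S$.

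For the first inequality, I would write
\[
P_t f(x)-P_t f(y)=\int_X (p_t(x,z)-p_t(y,z)) f(z)\, d\mu(z)
\]
and apply \eqref{eq:pkl} pointwise under the integral, immediately obtaining the stated bound when $f$ lies in a single $L^r(X,\mu)$; the $L^p+L^q$ case follows by splitting $f=f_1+f_2$ and the triangle inequality. In turn, this inequality says that $P_t f$ is Lipschitz on $X$ with Lipschitz constant at most $\frac{2C}{t^{1/d_w}} \sup_x P_{ct}|f|(x)$. For $f_i\in L^{p_i}(X,\mu)$, H\"older's inequality together with the sub-Gaussian bound \eqref{eq:sub-Gaussian1} and stochastic completeness bounds $P_{ct}|f_i|$ uniformly on $X$, so both this Lipschitz constant and $\|P_t f\|_{L^\infty(X,\mu)}$ are finite. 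Since $D^{1,\infty}(X)$ coincides with the space of Lipschitz functions on $X$, it follows that $P_t f\in W^{1,\infty}(X)$.

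Finally, for the pointwise bound \eqref{GB-semigroup}, I would pass to the limit in the Lipschitz inequality along an edge of $\mathcal S$. Fix $x\in\mathcal S$ that is a Lebesgue point of $\partial P_t f$ on the edge through $x$; the set of such points has full $\nu$-measure because, on each edge, $P_t f$ is an absolutely continuous function of arclength (from the $W^{1,\infty}$ membership just established). For $y$ on this edge close enough to $x$, $\gamma(x,y)$ is a straight segment, and \eqref{eq:weakGradient} together with the first inequality yields
\[
\frac{1}{d(x,y)}\left|\int_{\gamma(x,y)}\partial P_t f\, d\nu\right|=\frac{|P_t f(x)-P_t f(y)|}{d(x,y)}\le \frac{C}{t^{1/d_w}}\bigl(P_{ct}|f|(x)+P_{ct}|f|(y)\bigr).
\]
Letting $y\to x$ along the edge, the left side converges to $|\partial P_t f(x)|$ by one-dimensional Lebesgue differentiation, and the right side converges to $\frac{2C}{t^{1/d_w}} P_{ct}|f|(x)$ by the continuity of $P_{ct}|f|$, which itself follows from the first inequality applied to $|f|$. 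Absorbing the factor $2$ into $C$ finishes the argument. The only delicate point is this final limit, but it reduces to the standard Lebesgue differentiation theorem for Sobolev functions on an interval, so no serious obstacle is anticipated.
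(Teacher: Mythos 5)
Your proposal is correct and follows essentially the same route as the paper: the first inequality by integrating the kernel Lipschitz estimate of Theorem \ref{Lipschitz pt} against $f$, the $W^{1,\infty}$ membership from boundedness of $P_tf$ and $P_{ct}|f|$ via the sub-Gaussian upper bound, and the pointwise gradient bound by applying the fundamental-theorem identity \eqref{eq:weakGradient} on an edge and letting $y\to x$ with the Lebesgue differentiation theorem, using continuity of $P_{ct}|f|$. No gaps to report.
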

 
 \begin{proof}
 The first part of the corollary follows directly from Theorem \ref{Lipschitz pt}. Indeed, for every $f \in L^p(X,\mu)+ L^q(X,\mu)$, $x,y \in X$ and $t>0$
 \begin{align*}
 | P_t f (x)-P_tf(y)|&=\left| \int_X p_t(z,x) f(z) d\mu(z)- \int_X p_t(z,y) f(z) d\mu(z) \right| \\
  &\le \int_X | p_t(z,x)- p_t(z,y)|\,  | f(z) | d\mu(z) \\
  &\le  C   \frac{d(x,y)}{t^{1/d_w}}  \int_X ( p_{ct}(z,x)+p_{ct}(z,y)) | f(z) | d\mu(z) \\
  & \le C   \frac{d(x,y)}{t^{1/d_w}}  ( P_{ct}|f|(x)+P_{ct}|f|(y)).
 \end{align*}
Since both $P_t f$ and $P_{ct}|f|$ are bounded due to the upper bound in \eqref{eq:sub-Gaussian1}, we deduce that $P_tf \in W^{1,\infty}(X)$. Moreover, we note that for every $x,y \in \mathcal{S}$ such that $\gamma (x,y)$ does not intersect the center $0$ of $X$, one has
 \[
 | P_t f (x)-P_tf(y)| =\left| \int_{\gamma (x,y)} \partial P_tf (u) d\nu (u) \right|,
 \]
and therefore
 \[
 \left| \int_{\gamma (x,y)} \partial P_tf (u) d\nu (u) \right| \le  C  \frac{d(x,y)}{t^{1/d_w}}  ( P_{ct}|f|(x)+P_{ct}|f|(y)).
 \]
 Since $ P_{ct}|f|$ is continuous, it then easily follows from the Lebesgue differentiation theorem that  for $ \nu$ a.e. $x \in \mathcal S$ and every  $t>0$
 \[
 | \partial P_t f (x)| \le    \frac{C}{t^{1/d_w}}  P_{ct}|f|(x) .
 \]
 \end{proof}
 
 \begin{remark}\label{remark median}
 The inequality  \eqref{GB-semigroup} can be improved. Indeed, let $L\in \mathbb R$. Using  \eqref{GB-semigroup}  with the function $f-L$ yields that for $\nu$ a.e. $x \in \mathcal S$,
 \begin{align}\label{GB median}
  | \partial P_t f (x)| \le    \frac{C}{t^{1/d_w}}  P_{ct}(|f-L|)(x) 
  \end{align}
  However, repeating the proof of \eqref{GB-semigroup} shows that the set of $x \in \mathcal{S}$ for which \eqref{GB median} holds is independent from $L$ (it only depends on $f$). This gives therefore that for $\nu$ a.e. $x \in \mathcal S$,
 \begin{align*}
  | \partial P_t f (x)| \le    \frac{C}{t^{1/d_w}} \inf_{L \in \mathbb R}  P_{ct}(|f-L|)(x) .
  \end{align*}
 \end{remark}

We also obtain the following gradient estimate for the heat kernel.

\begin{corollary}\label{gb heat 1}
There exist $c,C>0$ such that for every $t >0$, $y \in X$, and $\nu$ a.e. $x \in \mathcal S$
\begin{align}\label{GB-heat kernel}
| \partial_x p_t(x,y)| \le  \frac{C}{t^{1/d_w}}  p_{ct}(x,y)
\end{align}
and such that for every $t >0$, $x \in X$, and $\nu$ a.e. $y \in \mathcal S$
\[
| \partial_y p_t(x,y)| \le  \frac{C}{t^{1/d_w}}  p_{ct}(x,y).
\]

\end{corollary}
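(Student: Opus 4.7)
My plan is to deduce the corollary directly from Theorem \ref{Lipschitz pt} by the same Lebesgue differentiation argument that was used for $P_t f$ in the proof of \eqref{GB-semigroup} in Corollary \ref{LipP_t}. I would establish the second inequality first, since it matches the ``$y$ in $\mathcal S$'' formulation naturally, and then derive the first from the symmetry $p_t(x,y) = p_t(y,x)$.

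Fix $x \in X$ and $t > 0$. By Theorem \ref{Lipschitz pt},
\[
|p_t(x, y_1) - p_t(x, y_2)| \le C \frac{d(y_1, y_2)}{t^{1/d_w}} \bigl(p_{ct}(x, y_1) + p_{ct}(x, y_2)\bigr),
\]
and since $p_{ct}(x, \cdot)$ is bounded by the upper estimate in \eqref{eq:sub-Gaussian1}, this shows that $y \mapsto p_t(x, y)$ is Lipschitz continuous on $X$, hence lies in $D^{1,\infty}(X)$ and admits a weak gradient $\partial_y p_t(x, \cdot)$ defined $\nu$-almost everywhere on $\mathcal S$. For any $y_1, y_2 \in \mathcal S$ whose geodesic $\gamma(y_1, y_2)$ does not cross the root $0$ of $X$, combining the defining identity of the weak gradient with the Lipschitz bound above gives
\[
\left| \int_{\gamma(y_1, y_2)} \partial_y p_t(x, u) d\nu(u) \right| \le C \frac{d(y_1, y_2)}{t^{1/d_w}} \bigl(p_{ct}(x, y_1) + p_{ct}(x, y_2)\bigr).
\]
Because $p_{ct}(x, \cdot)$ is continuous on $X$ and $\nu$ restricted to any single edge of $\mathcal S$ is simply one-dimensional Lebesgue measure, I would then apply the Lebesgue differentiation theorem edge-by-edge to obtain, for $\nu$-a.e.\ $y \in \mathcal S$,
\[
|\partial_y p_t(x, y)| \le \frac{C}{t^{1/d_w}} p_{ct}(x, y),
\]
which is the second assertion. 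The first assertion then follows immediately by swapping the roles of $x$ and $y$ and using $p_t(x, y) = p_t(y, x)$.

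Since the proof is essentially a specialization to the heat kernel of the argument already carried out for $P_t f$ in Corollary \ref{LipP_t}, I do not anticipate any serious obstacle. The one point that merits brief care is making sure the geodesic between the two test points $y_1, y_2$ can be chosen so as to avoid the distinguished center $0$, but this is routine: since we only need the estimate locally near a point $y \in \mathcal S \setminus \{0\}$ in order to apply Lebesgue differentiation, we may restrict to $y_1, y_2$ lying in a single edge containing $y$, and any such geodesic trivially avoids $0$.
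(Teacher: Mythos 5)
Your proposal is correct and follows essentially the same route as the paper, which proves the corollary by invoking Theorem \ref{Lipschitz pt} together with the argument of Corollary \ref{LipP_t} (Lipschitz bound $\Rightarrow$ $W^{1,\infty}$, then the fundamental theorem of calculus along the skeleton plus Lebesgue differentiation against the continuous majorant $p_{ct}(x,\cdot)$, and symmetry $p_t(x,y)=p_t(y,x)$ for the other variable). Your restriction of $y_1,y_2$ to a single edge is a perfectly adequate, if anything slightly cleaner, implementation of the step the paper phrases via geodesics avoiding the center $0$.
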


\begin{proof}
This follows from Theorem \ref{Lipschitz pt} and the same arguments as in the proof of Corollary \ref{LipP_t}.
\end{proof}

\begin{remark}
  For a fixed $y \in X$,  the upper bound \eqref{GB-heat kernel} holds outside a set of $x$'s  of $\nu$-measure zero, but let us notice that this set a priori depends on $y$.
\end{remark}
\begin{remark}
  Using the sub-Gaussian upper bound \eqref{eq:sub-Gaussian1} one can rewrite the gradient estimate in Proposition \ref{gb heat 1} as
  \begin{align}\label{gb heat 2}
| \partial_x p_t(x,y)| \le  \frac{C}{t}  \exp\left(-c\Bigl(\frac{d(x,y)^{d_{w}}}{t}\Bigr)^{\frac{1}{d_{w}-1}}\right).
\end{align}
\end{remark}

\subsection{Continuity \texorpdfstring{$L^q \to W^{1,p}$}{LqW1p}  of  the heat semigroup }\label{sec:3.3}

In the previous section we obtained pointwise gradient estimates for the heat kernel. The goal of this section is to obtain $L^p$ gradient bounds. Let us observe that one can not obtain integrated gradient estimates  by simply integrating with respect to $\nu$ the upper bound \eqref{gb heat 2}. Indeed, since $\nu$ is not a Radon measure,  for $\nu$ a.e. $x \in \mathcal S$ the function $y \to \exp\left(-c\Bigl(\frac{d(x,y)^{d_{w}}}{t}\Bigr)^{\frac{1}{d_{w}-1}}\right)$ is not in $L^p(\mathcal S, \nu)$ for any $p \ge 1$. Instead we will obtain integrated gradient estimates using the co-differential operator and an associated Poincar\'e type inequality proved in the Proposition \ref{poinc_div} below.

The co-differential is defined as the  $L^2$ adjoint of the weak gradient $\partial$. This means that the operator $\partial^*$ is the operator  from $L^2(\mathcal{S},\nu)$ to $L^2(X,\mu)$ with domain
\[
\mathrm{dom} \, \partial^* := \left\{\eta \in L^2(\mathcal{S},\nu) :~\exists f\in L^2(X,\mu),~\text{with } \int_\mathcal{S}  \eta \partial \phi  d\nu = \int_X f \phi d\mu,  \forall~\phi\in W^{1,2}(X) \right\}.
\]
It follows from Theorem \ref{proposition closed} that the operator $\partial$ is closed and densely defined on $L^2(X,\mu)$. Therefore $\partial^*$ is a densely defined and closed operator, see Theorem VIII.1 in \cite{RS72}.  Given  $\eta \in \mathrm{dom} \, \partial^*$, there is a unique $f \in L^2(X,\mu)$ that satisfies
\[
\int_\mathcal{S}  \eta \partial \phi  d\nu = \int_X f \phi d\mu, \quad   \forall~\phi\in W^{1,2}(X)
\]
and we define then $\partial^* \eta = f$.  

\begin{lemma}\label{laplace codiff}
We have
\[
\dom\Delta = \left\{u\in\dom\eng:~\partial u \in \dom\partial^*\right\},
\]
and for $u\in\dom\Delta$,  $$\Delta u=-\partial^*\partial u .$$ 
\end{lemma}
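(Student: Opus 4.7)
The plan is to unpack the definition of $\Delta$ as the infinitesimal generator of the Dirichlet form $(\eng,W^{1,2}(X))$ and show that it matches term-by-term the definition of $-\partial^*\partial$. Recall that by construction of the generator from a densely defined, closed, symmetric form, $u\in\dom\Delta$ if and only if $u\in\dom\eng=W^{1,2}(X)$ and there exists $f\in L^2(X,\mu)$ such that
\[
\eng(u,\phi)=-\int_X f\phi\,d\mu,\qquad\forall\phi\in W^{1,2}(X),
\]
in which case $\Delta u=f$. On the other hand, the definition of $\eng$ gives $\eng(u,\phi)=\int_{\mathcal S}\partial u\,\partial\phi\,d\nu$ for every $u,\phi\in W^{1,2}(X)$.

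I would then argue the two inclusions separately. Assume first that $u\in\dom\Delta$. Substituting the integral expression of $\eng$ into the generator identity yields
\[
\int_{\mathcal S}\partial u\,\partial\phi\,d\nu=\int_X (-\Delta u)\,\phi\,d\mu,\qquad\forall\phi\in W^{1,2}(X).
\]
Since $-\Delta u\in L^2(X,\mu)$, this is precisely the defining property for $\partial u\in\dom\partial^*$ with $\partial^*(\partial u)=-\Delta u$. Conversely, suppose $u\in W^{1,2}(X)$ and $\partial u\in\dom\partial^*$, and set $f=\partial^*\partial u\in L^2(X,\mu)$. By definition of the codifferential,
\[
\eng(u,\phi)=\int_{\mathcal S}\partial u\,\partial\phi\,d\nu=\int_X f\phi\,d\mu,\qquad\forall\phi\in W^{1,2}(X),
\]
which is the defining identity for $u\in\dom\Delta$ with $\Delta u=-f=-\partial^*\partial u$.

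There is no substantive obstacle here: the statement is a direct consequence of the fact that $\partial$ is a closed densely defined operator (Proposition \ref{proposition closed}, applied at $p=2$) and that $\eng$ is expressed as $\int_{\mathcal S}\partial u\,\partial\phi\,d\nu$, so that the self-adjoint generator of $\eng$ is $-\partial^*\partial$ exactly as in the general theory of Dirichlet forms written as $\eng(u,u)=\|\partial u\|_{L^2(\mathcal S,\nu)}^2$. The only minor care needed is to track that the $L^2(X,\mu)$ pairing used to define both $\Delta$ and $\partial^*$ is the same, and that $\phi$ ranges over $W^{1,2}(X)$ in both characterizations, which is automatic from the setup.
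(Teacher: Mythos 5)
Your proposal is correct and is essentially the paper's own argument, just written out in full: the paper likewise observes that $u\in\dom\eng$ lies in $\dom\Delta$ iff $\eng(u,\phi)=-\int_X\phi\,\Delta u\,d\mu$ for all $\phi\in W^{1,2}(X)$ and then substitutes $\eng(u,\phi)=\int_{\mathcal S}\partial u\,\partial\phi\,d\nu$ to match the defining property of $\partial^*$. The sign bookkeeping and the identification $\partial^*(\partial u)=-\Delta u$ are handled correctly.
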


\begin{proof}
Observe that $u\in \dom\mathcal{E}$ is in $\dom\Delta$ if and only if $\mathcal{E}(u,\phi) = -\int_X \phi \Delta u d\mu$ for all  $\phi\in W^{1,2}(X)$. Since
\[
\mathcal{E}(u,\phi)=\int_\mathcal{S} \partial u \partial \phi d\nu,
\]
the result follows.
\end{proof}

 The co-differential satisfies the following  inequality.

\begin{lemma}\label{poinc_div}
Let $\eta \in \mathrm{dom}\, \partial^*$ and  $x_0 \in X$.
Assume that $\int_X d(x_0,x) | \partial^* \eta | (x) d\mu (x)<\infty$ and  $d(x_0,x) | \eta (x) | \to 0$ as  $d(x_0,x) \to +\infty$. Then we have
\[
\int_\mathcal{S} | \eta | d\nu \le \int_X d(x_0,x) | \partial^* \eta| (x)  d\mu (x).
\]
\end{lemma}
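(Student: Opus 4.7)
The plan is to apply the adjoint identity
\[
\int_\mathcal{S} \eta\, \partial\phi\, d\nu = \int_X \phi\, \partial^*\eta\, d\mu,
\]
valid for $\phi\in W^{1,2}(X)$, to a sequence of test functions whose gradients approximate $\mathrm{sgn}(\eta)$ and which are bounded pointwise by $d(x_0,\cdot)$. Since $\mathrm{sgn}(\eta)$ is merely bounded (not a priori in $L^p$ for $p<\infty$), I would first truncate it to land in $L^1\cap L^\infty$: for $a,R>0$, set
\[
\omega_{a,R}:=\mathrm{sgn}(\eta)\,\mathbf{1}_{\{|\eta|\ge a\}}\,\mathbf{1}_{B(x_0,R)}.
\]
Chebyshev applied to $\eta\in L^2(\mathcal{S},\nu)$ gives $\nu(\{|\eta|\ge a\})<\infty$, so $\omega_{a,R}\in L^1(\mathcal{S},\nu)\cap L^\infty(\mathcal{S},\nu)$, with $\eta\,\omega_{a,R}=|\eta|\,\mathbf{1}_{\{|\eta|\ge a\}\cap B(x_0,R)}$.

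Next, using the metric-tree structure of $X$, I would define the primitive
\[
\phi(x):=\int_{\gamma(x_0,x)}\omega_{a,R}\,\epsilon_{x_0}\, d\nu,
\]
where $\epsilon_{x_0}\in\{\pm 1\}$ is the orientation factor on each edge that compensates for the difference between the $x_0$-radial direction and the default sign convention of $\partial$ (which is rooted at $0$). Then $\phi$ is Lipschitz with $\partial\phi=\omega_{a,R}$ and $|\phi(x)|\le d(x_0,x)$. Moreover, because $\omega_{a,R}$ is compactly supported in $\mathcal{S}$ and has $\nu$-finite support, $\phi$ is in fact globally bounded. The $x_0$-rooted analogue of Lemma \ref{approxi L4}, applied with $p=1$, then produces the sequence $\phi_k:=h_k\phi\in W^{1,2}(X)$ with $\partial\phi_k\to\omega_{a,R}$ in $L^2(\mathcal{S},\nu)$ and $|\phi_k(x)|\le d(x_0,x)$.

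Plugging $\phi_k$ into the adjoint identity and letting $k\to\infty$, Cauchy--Schwarz (together with $\eta\in L^2$) gives $\int_\mathcal{S}\eta\,\partial\phi_k\,d\nu\to\int_\mathcal{S}\eta\,\omega_{a,R}\,d\nu=\int_{\{|\eta|\ge a\}\cap B(x_0,R)}|\eta|\,d\nu$. On the other side, $\phi_k\to\phi$ pointwise with $|\phi_k\,\partial^*\eta|\le d(x_0,\cdot)\,|\partial^*\eta|\in L^1(X,\mu)$ by the first hypothesis, so dominated convergence yields $\int_X\phi_k\,\partial^*\eta\,d\mu\to\int_X\phi\,\partial^*\eta\,d\mu$, which is bounded in absolute value by $\int_X d(x_0,x)|\partial^*\eta|\,d\mu$. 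Combining the two sides and sending $a\downarrow 0$, $R\uparrow\infty$ by monotone convergence on the left yields the lemma.

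The hardest step I anticipate is the adaptation in the second paragraph: Lemma \ref{approxi L4} is written for primitives rooted at the center $0$ of $X$, and transporting the construction to an arbitrary $x_0$ requires carrying the sign $\epsilon_{x_0}$ through the Leibniz computation $\partial(h_k\phi)=h_k\,\partial\phi+\phi\,\partial h_k$ and re-verifying that $\phi\,\partial h_k\to 0$ in $L^2(\mathcal{S},\nu)$, which uses the boundedness of $\phi$ together with $|\partial h_k|\le 3^{-k}\mathbf{1}_{3^k\bar W_0\setminus 3^{k-1}\bar W_0}$ and $\nu(3^k\bar W_0\setminus 3^{k-1}\bar W_0)\lesssim 3^k$. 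The decay hypothesis $d(x_0,x)|\eta(x)|\to 0$, while tucked away in this $L^2$-approximation approach, is exactly what is needed in the more direct alternative that skips the magnitude truncation: testing against $h_k\int_{\gamma(x_0,\cdot)}\mathrm{sgn}(\eta)\epsilon_{x_0}\,d\nu$ produces an error term $\int_\mathcal{S}\eta\,\phi\,\partial h_k\,d\nu$ which, via $|\phi|\le d(x_0,\cdot)$ and the same estimate on $\partial h_k$, is bounded by $C\sup_{d(x_0,x)\ge c\,3^{k-1}}d(x_0,x)|\eta(x)|\to 0$.
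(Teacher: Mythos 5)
Your proposal is correct and rests on the same core mechanism as the paper's proof: test the defining identity for $\partial^*$ against functions of the form (cutoff $h_k$) $\times$ (primitive of an approximate sign of $\eta$ normalized to vanish at $x_0$), use the pointwise bound by $d(x_0,\cdot)$ on the main term, kill the cutoff error, and pass to the limit. The differences are in the regularization and are worth noting. The paper mollifies the sign, taking $\eta/(|\eta|+1/n)$, keeps the primitive rooted at $0$ and simply subtracts its value at $x_0$ (this also sidesteps your orientation factor $\epsilon_{x_0}$ entirely: your $x_0$-rooted primitive coincides with the $0$-rooted primitive minus the constant $\int_{\gamma(0,x_0)}$, so you could adopt that normalization and drop the bookkeeping); the resulting primitive only satisfies the linear bound $d(x_0,\cdot)$, so the error term $\int_{\mathcal S}|\partial h_n|\,d(x_0,\cdot)\,|\eta|\,d\nu$ is controlled by $\sup_{x\in 3^n\bar W_0\setminus 3^{n-1}\bar W_0} d(x_0,x)|\eta(x)|$, i.e.\ precisely by the decay hypothesis, and the conclusion follows by Fatou. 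You instead use the hard sign truncated to $\{|\eta|\ge a\}\cap B(x_0,R)$, whose $\nu$-finite support (Chebyshev, $\eta\in L^2$) makes the primitive globally bounded, so the cutoff error $\phi\,\partial h_k$ already vanishes in $L^2(\mathcal S,\nu)$ and pairs against $\eta\in L^2$ by Cauchy--Schwarz; then dominated convergence (with dominating function $d(x_0,\cdot)|\partial^*\eta|\in L^1(\mu)$) and monotone convergence in $a,R$ finish. A consequence you correctly flag is that your route never invokes the hypothesis $d(x_0,x)|\eta(x)|\to 0$; your argument appears to be sound without it, so you prove a formally stronger statement, at the cost of a slightly heavier approximation scheme (two truncation parameters plus the cutoff) compared with the paper's one-parameter mollification, whose admissibility in $W^{1,2}$ is guaranteed by the domination $|\eta|/(|\eta|+1/n)\le n|\eta|\in L^2$.
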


\begin{proof}
We consider the sequence of cutoff functions $\{h_n\}_{n\ge 1}$ introduced in the proof of Lemma \ref{approxi L4}. Let now  $\eta \in \mathrm{dom}\, \partial^*$ and  $x_0 \in X$ satisfy the assumptions of the proposition. For $n \ge 1$, we define 
\begin{align*}
g_n (x)=h_n (x)\left(  \int_{\gamma(0,x)} \frac{\eta (y) }{|\eta (y) | +1/n}d\nu(y) -\int_{\gamma(0,x_0)} \frac{\eta (y) }{|\eta (y) | +1/n}d\nu(y) \right), \quad x \in \mathcal{S}
\end{align*}
which we extend by continuity to $X$. The extension is still denoted by $g_n$ and it is easy to see that it satisfies for every $x \in X$
\[
|g_n(x)| \le d(x_0,x),
\] 
because $|h_n(x)|\le 1$ and the function
\[
x\to \int_{\gamma(0,x)} \frac{\eta (y) }{|\eta (y) | +1/n}d\nu(y) -\int_{\gamma(0,x_0)} \frac{\eta (y) }{|\eta (y) | +1/n}d\nu(y)
\]
is 1-Lipschitz and vanishes at $x_0$.
Note also  that $g_n$ is compactly supported and therefore in $L^2(X,\mu)$. 

We have for $\nu$ a.e. $x\in X$
\[
\partial g_n (x)=\varepsilon_n(x)+\frac{ h_n(x)\eta (x) }{|\eta (x) | +1/n} .
\]
where
\[
\varepsilon_n(x)=\partial h_n (x) \left(\int_{\gamma(0,x)} \frac{\eta (y)}{|\eta (y) | +1/n}d\nu(y)-\int_{\gamma(0,x_0)} \frac{\eta (y) }{|\eta (y) | +1/n}d\nu(y)\right).
\]
Therefore
\begin{align*}
 \int_\mathcal{S}  \frac{ h_n(x)\eta (x)^2 }{|\eta (x) | +1/n} d\nu(x) &=\int_\mathcal{S}\partial g_n (x) \eta(x) d\nu(x) -\int_{\mathcal S}\varepsilon_n (x) \eta (x)  d\nu(x) \\
  &=\int_X g_n (x) \partial^* \eta(x) d\mu(x) -\int_{\mathcal S}\varepsilon_n (x) \eta (x)  d\nu(x) .
\end{align*}
This yields
\begin{align*}
\int_\mathcal{S}  \frac{ h_n(x)\eta (x)^2 }{|\eta (x) | +1/n} d\nu(x) & \le\int_X | g_n (x) \partial^* \eta(x) | d\mu(x)  +\int_{\mathcal S}|\varepsilon_n (x)| |\eta (x)|  d\nu(x) \\
 & \le \int_X d(x_0,x) |  \partial^* \eta(x) | d\mu(x) +\int_{\mathcal S} | \partial h_n (x)| d(x_0,x) |\eta (x) | d\nu(x),
\end{align*}
where we used in the last inequality that
\[
| \varepsilon_n (x) | \le | \partial h_n (x)| d(x_0,x). 
\]
By Fatou's lemma, it therefore remains to prove that
\[
\lim_{n \to +\infty} \int_{\mathcal S} | \partial h_n (x)| d(x_0,x) |\eta (x) | d\nu(x)=0.
\]
This simply follows from the fact that
\begin{align*}
 \int_{\mathcal S} | \partial h_n (x)| d(x_0,x) |\eta (x) | d\nu(x) & \le \frac{1}{3^n}\int_{3^n\bar{W}_0\setminus 3^{n-1}\bar{W}_0 } d(x_0,x) |\eta (x) | d\nu(x) \\
 & \le \frac{\nu \left(3^n\bar{W}_0\setminus 3^{n-1}\bar{W}_0  \right)}{3^n} \sup_{ x \in 3^n\bar{W}_0\setminus 3^{n-1}\bar{W}_0} d(x_0,x) |\eta (x) | \\
 & \le C\sup_{ x \in 3^n\bar{W}_0\setminus 3^{n-1}\bar{W}_0} d(x_0,x) |\eta (x) |,
\end{align*}
where we used \eqref{eq:partial-hn} in the first inequality.
\end{proof}
From the previous lemmas, one can get the following $L^p$ bound for the gradient of the heat kernel which is interesting in itself.

\begin{lemma}\label{lemma int}
Let $p \ge 1$. There exists a constant $C>0$ such that for every $t >0$ and $x \in X$
\[
\int_\mathcal{S} |\partial_y p_t(x,y)|^p d\nu(y)\le  \frac{C}{t^{p-1/d_w}} .
\]
\end{lemma}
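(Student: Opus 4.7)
My approach splits into two steps: an $L^1$ bound obtained from the Poincaré-type inequality of Lemma \ref{poinc_div}, followed by an $L^\infty$-interpolation that uses the pointwise gradient bound of Corollary \ref{gb heat 1}.

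\emph{Step 1 (case $p=1$).} Fix $x\in X$ and take $\eta(y)=\partial_y p_t(x,y)$ with the base point $x_0=x$ in Lemma \ref{poinc_div}. I need to check the hypotheses and identify $\partial^*\eta$. Since $p_{t/2}(x,\cdot)\in L^2(X,\mu)$ by the sub-Gaussian upper bound in \eqref{eq:sub-Gaussian1} together with $d_h$-Ahlfors regularity, the semigroup property gives $p_t(x,\cdot)=P_{t/2}p_{t/2}(x,\cdot)\in\dom\Delta$. By Lemma \ref{laplace codiff}, $\partial p_t(x,\cdot)\in\dom\partial^*$ and
\[
\partial^*\partial_y p_t(x,\cdot)=-\Delta p_t(x,\cdot)=-\partial_t p_t(x,\cdot).
\]
The decay condition $d(x,y)|\partial_y p_t(x,y)|\to 0$ follows from Corollary \ref{gb heat 1} combined with the upper bound in \eqref{eq:sub-Gaussian1}, which gives super-polynomial decay in $d(x,y)$. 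Hence Lemma \ref{poinc_div} yields
\[
\int_\mathcal{S}|\partial_y p_t(x,y)|\,d\nu(y)\le \int_X d(x,z)\,|\partial_t p_t(x,z)|\,d\mu(z).
\]

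\emph{Step 2 (evaluating the weighted integral).} I insert the sub-Gaussian bound on $\partial_t p_t$ from Lemma \ref{sub_time} and use $d_h$-Ahlfors regularity of $\mu$. Splitting $X$ into dyadic annuli $\{z:d(x,z)\in[r,2r]\}$ whose $\mu$-measure is $\lesssim r^{d_h}$, or equivalently changing variables $u=r/t^{1/d_w}$ in the layer cake decomposition, gives
\[
\int_X d(x,z)\,|\partial_t p_t(x,z)|\,d\mu(z)\le C\,t^{-1-d_h/d_w}\cdot t^{(d_h+1)/d_w}=C\,t^{-1+1/d_w},
\]
which is exactly the claimed bound for $p=1$, since $d_w=d_h+1$.

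\emph{Step 3 (case $p>1$ by interpolation).} For $p>1$, I bound
\[
\int_\mathcal{S}|\partial_y p_t(x,y)|^p\,d\nu(y)\le \bigl\|\partial_y p_t(x,\cdot)\bigr\|_{L^\infty(\mathcal{S},\nu)}^{p-1}\int_\mathcal{S}|\partial_y p_t(x,y)|\,d\nu(y).
\]
Combining the pointwise gradient estimate $|\partial_y p_t(x,y)|\le C t^{-1/d_w}p_{ct}(x,y)$ from Corollary \ref{gb heat 1} with the on-diagonal upper bound $p_{ct}(x,y)\le C t^{-d_h/d_w}$ from \eqref{eq:sub-Gaussian1} gives $\|\partial_y p_t(x,\cdot)\|_{L^\infty(\mathcal{S},\nu)}\le C/t$. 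Multiplying by the $L^1$ bound from Steps 1--2 yields $C t^{-(p-1)}\cdot t^{-1+1/d_w}=C t^{-p+1/d_w}$, as desired.

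\emph{Main obstacle.} The one delicate point is the verification that Lemma \ref{poinc_div} legitimately applies to $\eta=\partial_y p_t(x,\cdot)$: one must check both that $\eta\in\dom\partial^*$ (handled via Lemma \ref{laplace codiff}, once $p_t(x,\cdot)\in\dom\Delta$ is justified by writing it as $P_{t/2}p_{t/2}(x,\cdot)$) and that the weighted integral $\int_X d(x,z)|\partial^*\eta|\,d\mu$ is finite, which ultimately reduces to the moment estimate of Step 2. Once these technical integrability checks are in place, the remainder is routine.
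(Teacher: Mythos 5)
Your proposal is correct and follows essentially the same route as the paper: reduce to $p=1$ via the pointwise bound of Corollary \ref{gb heat 1}, then apply Lemma \ref{laplace codiff} and Lemma \ref{poinc_div} with $\eta=\partial_y p_t(x,\cdot)$ and estimate $\int_X d(x,y)|\partial_t p_t(x,y)|\,d\mu(y)$ using Lemma \ref{sub_time} and Ahlfors regularity. Your explicit verification of the hypotheses of Lemma \ref{poinc_div} (that $p_t(x,\cdot)\in\dom\Delta$ and the decay condition) is a welcome addition that the paper leaves implicit.
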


\begin{proof}
In view of Corollary \ref{gb heat 1}, it is enough to prove the result for $p=1$.  From Lemma \ref{laplace codiff} and Lemma \ref{poinc_div},
\[
\int_\mathcal{S} |\partial_y p_t(x,y)| d\nu(y) \le \int_X d(x,y) |\Delta_y p_t(x,y) | d\mu (y).
\]
We deduce then from Lemma \ref{sub_time} and \eqref{eq:boundpolyptbypct} that 
\begin{align*}
\int_\mathcal{S} |\partial_y p_t(x,y)| d\nu(y) 
& \le C t^{-1-\frac{d_{h}}{d_{w}}} \int_X d(x,y)  \exp\biggl(-c\Bigl(\frac{d(x,y)^{d_{w}}}{t}\Bigr)^{\frac{1}{d_{w}-1}}\biggr) d\mu(y) \\
& \le C t^{-2+\frac2{d_{w}}} \int_X   \exp\biggl(-c\Bigl(\frac{d(x,y)^{d_{w}}}{t}\Bigr)^{\frac{1}{d_{w}-1}}\biggr) d\mu(y).
\end{align*}
It is easy to see that
\[
\int_X \exp\biggl(-c\Bigl(\frac{d(x,y)^{d_{w}}}{t}\Bigr)^{\frac{1}{d_{w}-1}}\biggr) d\mu(y) \le C t^{\frac{d_h}{d_w}}
\]
and the results follows since $d_w=d_h+1$.
\end{proof}

We can now prove the continuity of the semigroup seen as an operator $ L^q(X,\mu) \to W^{1,p}(X)$.
\begin{theorem}\label{Lp bound gt}
Let $1 \le q \le p  \le +\infty$. There exists a constant $C>0$ such that for every $f \in L^q(X,\mu)$ and $t >0$,

\[
\|\partial P_t f \|_{L^p(\mathcal S,\nu)} \le \frac{C}{t^{\left( 1-\frac{1}{p}-\frac{1}{q} \right) \frac{1}{d_w}+\frac{1}{q}}} \|f\|_{L^q(X,\mu)}.
\]
In particular, $P_t :  L^p(X,\mu) \to W^{1,p}(X)$ is bounded for any $p \ge 1$ with 
\begin{equation}\label{eq:Lp-Wp}
\|  \partial P_t f \|_{L^p(\mathcal S,\nu)} \le \frac{C}{ t^{\alpha_p}} \|  f \|_{L^p(X,\mu)},
\end{equation}
where $\alpha_p=\left( 1-\frac{2}{p} \right) \frac{1}{d_w}+\frac{1}{p}$. 
\end{theorem}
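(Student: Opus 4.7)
The plan is to deduce the $L^q(X,\mu) \to L^p(\mathcal S,\nu)$ bound by Riesz-Thorin interpolation between two endpoint estimates for the linear operator $T_t f := \partial P_t f$. The first endpoint, corresponding to $q=1$, will come from Minkowski's integral inequality combined with Lemma \ref{lemma int}; the second endpoint, corresponding to $p=q=\infty$, is essentially contained in Corollary \ref{LipP_t}. Once both endpoints are in hand, the general bound for $1 \le q \le p \le \infty$ follows from a single interpolation and a routine algebraic simplification.

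For the first endpoint, I would fix $1 \le p \le \infty$, write
\[
T_t f(y) = \int_X \partial_y p_t(x,y) \, f(x) \, d\mu(x)
\]
for $\nu$-a.e.\ $y \in \mathcal S$, and apply Minkowski's integral inequality to get
\[
\|T_t f\|_{L^p(\mathcal S,\nu)} \le \int_X \|\partial_y p_t(x,\cdot)\|_{L^p(\mathcal S,\nu)} \, |f(x)| \, d\mu(x).
\]
For $1 \le p < \infty$, Lemma \ref{lemma int} bounds the integrand uniformly in $x$ by $C\, t^{-(1 - 1/(pd_w))}$; for $p = \infty$, Corollary \ref{gb heat 1} together with the upper bound in \eqref{eq:sub-Gaussian1} gives $\|\partial_y p_t(x,\cdot)\|_{L^\infty(\mathcal S,\nu)} \le C\, t^{-1}$, matching the same formula under the convention $1/(p d_w) = 0$. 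This produces the $q=1$ endpoint
\[
\|T_t f\|_{L^p(\mathcal S,\nu)} \le C\, t^{-(1 - 1/(pd_w))} \|f\|_{L^1(X,\mu)}, \qquad 1 \le p \le \infty.
\]
For the second endpoint, Corollary \ref{LipP_t} gives the pointwise bound $|\partial P_t f(y)| \le C\, t^{-1/d_w} P_{ct}|f|(y)$, and since $P_{ct}|f|(y) \le \|f\|_{L^\infty(X,\mu)}$ one immediately obtains $\|T_t f\|_{L^\infty(\mathcal S,\nu)} \le C\, t^{-1/d_w} \|f\|_{L^\infty(X,\mu)}$.

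To conclude, given $1 \le q \le p \le \infty$, I would set $s := p/q \ge 1$ and apply Riesz-Thorin between $T_t : L^1(X,\mu) \to L^s(\mathcal S,\nu)$ with norm $\le C\, t^{-(1 - 1/(sd_w))}$ and $T_t : L^\infty(X,\mu) \to L^\infty(\mathcal S,\nu)$ with norm $\le C\, t^{-1/d_w}$, using interpolation parameter $\theta = 1 - 1/q$, so that $1/q_\theta = 1/q$ and $1/p_\theta = (1-\theta)/s = 1/p$. The interpolated norm is then bounded by
\[
C\, t^{-\frac{1}{q}\bigl(1 - \frac{1}{sd_w}\bigr) - \bigl(1 - \frac{1}{q}\bigr)\frac{1}{d_w}},
\]
and substituting $s = p/q$ simplifies the exponent to $-\bigl[(1 - 1/p - 1/q)/d_w + 1/q\bigr]$, exactly the bound in the statement. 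The special case $p = q$ (so $s = 1$) yields \eqref{eq:Lp-Wp} with $\alpha_p = (1 - 2/p)/d_w + 1/p$.

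There is no real obstacle at this stage, since Riesz-Thorin applies to linear operators between $L^p$ spaces on arbitrary $\sigma$-finite measure spaces and is insensitive to the fact that $\mu$ on $X$ and $\nu$ on $\mathcal S$ are mutually singular. The conceptual point worth emphasizing is that the $L^1(\mu) \to L^p(\nu)$ endpoint produced by Minkowski is strictly stronger than what one could obtain by integrating the pointwise estimate \eqref{GB-heat kernel} in $y$; that pointwise bound fails to be $L^p(\nu)$-integrable on its own, so the use of Lemma \ref{lemma int}, which itself rests on the co-differential and the Poincar\'e-type inequality of Lemma \ref{poinc_div}, is essential.
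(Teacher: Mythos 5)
Your proof is correct and follows the same overall strategy as the paper: Riesz--Thorin interpolation between a $q=1$ endpoint and a $p=\infty$ endpoint, resting on Lemma \ref{lemma int} and Corollary \ref{LipP_t}. The difference is a mirror-image choice of which endpoint is a one-parameter family: the paper interpolates between the single estimate $L^1(X,\mu)\to L^1(\mathcal S,\nu)$ (equation \eqref{eq:L1-W1}, obtained by summing over edges and invoking Theorem 3.3 of \cite{BC}) and the family $L^{p}(X,\mu)\to L^\infty(\mathcal S,\nu)$ (equation \eqref{eq:Lq-Winfty}, obtained from \eqref{GB-semigroup} and H\"older), whereas you interpolate between the family $L^1(X,\mu)\to L^s(\mathcal S,\nu)$ (via Minkowski's integral inequality and Lemma \ref{lemma int}) and the single estimate $L^\infty(X,\mu)\to L^\infty(\mathcal S,\nu)$. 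Your exponent bookkeeping checks out, and your route has the advantage of bypassing the appeal to \cite{BC}; what it costs is that you must justify the identity $\partial P_t f(y)=\int_X \partial_y p_t(x,y)f(x)\,d\mu(x)$ for $\nu$-a.e.\ $y$ before Minkowski can be applied. This is not automatic for the weak gradient (which is defined through edge integrals, and the null set in \eqref{GB-heat kernel} depends on the other variable), but it follows by Fubini on each edge $\mathbf e(u,v)$ from
\[
\int_{\mathbf e(u,v)}\partial P_t f\,d\nu=\int_X\Bigl(\int_{\mathbf e(u,v)}\partial_y p_t(x,y)\,d\nu(y)\Bigr)f(x)\,d\mu(x),
\]
using joint measurability of $\partial_y p_t$ and the integrable majorant supplied by \eqref{GB-heat kernel}; you should make that step explicit. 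With that addition the argument is complete.
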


\begin{proof}
Let $f \in L^1(X,\mu)$. 
For every adjacent vertices $u,v$ in some $V_n$ with $u \le v$, we have
\begin{align*}
P_tf(v)-P_tf (u)
 & =\int_X (p_t(v,z)- p_t(u,z)) f(z) d\mu(z)
 \end{align*}
which yields
 \[
\int_{\mathbf{e}(u,v)} \partial P_tf \, d\nu=\int_X (p_t(v,z)- p_t(u,z)) f(z) d\mu(z).
 \]
 Since it is true for every $n$, $u,v \in V_n$, $u \le v$ and since
 \[
 |p_t(v,z)- p_t(u,z)| \le \int_{\mathbf{e}(u,v)} |\partial_y p_t (z,y) | d\nu (y),
 \]
 one deduces from Theorem 3.3 in \cite{BC} and Lemma \ref{lemma int} that 
\begin{equation}\label{eq:L1-W1}
\int_\mathcal{S} |\partial P_t f (x) |d\nu(x) \le\frac{C}{t^{d_h/d_w}}  \|  f \|_{L^1(X,\mu)}.
\end{equation}

On the other hand, for $f\in L^p(X,\mu)$ one has from \eqref{GB-semigroup}
\begin{align*}
 | \partial P_t f (x)|& \le    \frac{C}{t^{1/d_w}}  P_{ct}|f|(x) =  \frac{C}{t^{1/d_w}} \int_X p_{ct}(x,y) |f(y)| d\mu(y) \\
  & \le  \frac{C}{t^{1/d_w}} \left(\int_X p_{ct}(x,y)^{r} d\mu(y)\right)^{1/r} \left(\int_X |f(y)|^p d\mu(y) \right)^p\\
  &\le \frac{C}{t} \left(\int_X\exp\biggl(-c\Bigl(\frac{d(x,y)^{d_{w}}}{t}\Bigr)^{\frac{1}{d_{w}-1}}\biggr) d\mu(y) \right)^{1/r} \| f \|_{L^p(X,\mu)} \\
  & \le \frac{C}{t^{1-\frac{d_h}{rd_w}}} \| f \|_{L^p(X,\mu)}
\end{align*}
where $r$ is the conjugate exponent of $p$. We obtain therefore
\begin{equation}\label{eq:Lq-Winfty}
\|\partial P_t f \|_{L^\infty(\mathcal S,\nu)} \le  \frac{C}{t^{1-\left(1-\frac{1}{p} \right)\frac{d_h}{d_w}}} \| f \|_{L^p(X,\mu)}.    
\end{equation}
For  $p=\infty$, this bound also holds since 
\[
| \partial P_t f (x)| \le   \frac{C}{t^{1/d_w}}  P_{ct}|f|(x) \le \frac{C}{t^{1/d_w}}\|f\|_{L^{\infty}(X,\mu)}.
\]

In view of \eqref{eq:L1-W1} and  \eqref{eq:Lq-Winfty}  the conclusion follows then from the Riesz-Thorin interpolation theorem by noticing that $d_w-d_h=1$.
\end{proof}

\subsection{Sobolev  inequalities}

In this section we use the heat kernel gradient bounds from the previous sections to prove Sobolev inequalities.
We start with an interesting Poincar\'e type inequality for the heat kernel measures.

\begin{lemma}\label{lemma pseud point}
Let $p\ge 1$. There exist constants $c_1,c_2>0$ such that for every $f\in W^{1,p}(X)$, $t>0$ and $x \in X$
   \begin{align*}
\int_X  p_{t}(x,y) |f(x)-f(y) |^p d\mu(y) \le c_1  t^{ p \alpha_p} \int_{\mathcal{S}} p_{c_2 t}(x,z) | \partial f|^p (z) d\nu (z),
\end{align*}
where, as before, $\alpha_p=\left( 1-\frac{2}{p} \right) \frac{1}{d_w}+\frac{1}{p}.$
\end{lemma}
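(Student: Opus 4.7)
The plan is to estimate $|f(x)-f(y)|^p$ by the Hölder inequality \eqref{eq:Holder} applied to the weak gradient along the geodesic $\gamma(x,y)$, swap the order of integration by Fubini, and then use the sub-Gaussian upper and lower bounds \eqref{eq:sub-Gaussian1} to convert the resulting inner integral into $p_{c_2 t}(x,z)$ times a power of $t$.

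More precisely, first raise \eqref{eq:Holder} to the $p$-th power to obtain, for $x,y\in\mathcal S$,
\[
|f(x)-f(y)|^p \le d(x,y)^{p-1}\int_{\gamma(x,y)} |\partial f|^p\,d\nu,
\]
which extends to every $x,y\in X$ by continuity of $f$ and density of $\mathcal S$ (the geodesic $\gamma(x,y)$ lies in $\mathcal S$ apart from at most the endpoints). Multiplying by $p_t(x,y)$, integrating over $y$, and interchanging integrals via Fubini yields
\[
\int_X p_t(x,y)|f(x)-f(y)|^p\,d\mu(y) \le \int_{\mathcal S} |\partial f|^p(z)\, I(x,z)\, d\nu(z),
\]
where $I(x,z):=\displaystyle\int_{A_{x,z}} p_t(x,y)\,d(x,y)^{p-1}\,d\mu(y)$ and $A_{x,z}:=\{y\in X:z\in\gamma(x,y)\}$. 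The task reduces to proving $I(x,z)\le C t^{p\alpha_p}\, p_{c_2 t}(x,z)$.

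The key point is that on $A_{x,z}$ one has $d(x,y)=d(x,z)+d(z,y)$. Since $d_w/(d_w-1)>1$, the super-additivity $(a+b)^{d_w/(d_w-1)}\ge a^{d_w/(d_w-1)}+b^{d_w/(d_w-1)}$ for $a,b\ge 0$ combined with the upper bound in \eqref{eq:sub-Gaussian1} gives the factorization
\[
p_t(x,y)\le C\, t^{-d_h/d_w}\exp\!\Bigl(-c\bigl(d(x,z)^{d_w}/t\bigr)^{\frac{1}{d_w-1}}\Bigr)\exp\!\Bigl(-c\bigl(d(z,y)^{d_w}/t\bigr)^{\frac{1}{d_w-1}}\Bigr).
\]
Applying the lower bound in \eqref{eq:sub-Gaussian1} (with an appropriately adjusted constant $c_2$) to convert the first exponential back into $t^{d_h/d_w}\,p_{c_2 t}(x,z)$ yields on $A_{x,z}$ the cleaner bound $p_t(x,y)\le C\, p_{c_2 t}(x,z)\,\exp(-c(d(z,y)^{d_w}/t)^{1/(d_w-1)})$. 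I then split $d(x,y)^{p-1}\le C_p\bigl(d(x,z)^{p-1}+d(z,y)^{p-1}\bigr)$, apply \eqref{eq:boundpolyptbypct} to absorb the factor $d(x,z)^{p-1}$ (paying a factor $t^{(p-1)/d_w}$), and use the standard polynomial-times-sub-Gaussian volume estimate $\int_X d(z,y)^{\kappa}\exp(-c(d(z,y)^{d_w}/t)^{1/(d_w-1)})\,d\mu(y)\le C\, t^{(\kappa+d_h)/d_w}$ to evaluate both pieces. Both contributions are of order $t^{(p-1+d_h)/d_w}\,p_{c_2 t}(x,z)$, and since $d_w=d_h+1$ one checks
\[
\frac{p-1+d_h}{d_w}=\frac{p-2+d_w}{d_w}=\frac{p-2}{d_w}+1=p\alpha_p,
\]
which closes the argument.

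The main obstacle is the exponential factorization step: one must verify carefully the super-additivity of $s\mapsto s^{d_w/(d_w-1)}$ and then track the constants so that the leftover exponential can be exchanged, via the lower sub-Gaussian bound, for $p_{c_2 t}(x,z)$ with a genuinely new constant $c_2$ independent of $x,z,t$. Everything else is routine bookkeeping with \eqref{eq:boundpolyptbypct} and the volume estimate.
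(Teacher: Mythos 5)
Your argument is correct, and it reaches the conclusion along a route that shares its skeleton with the paper's proof (pointwise bound on $|f(x)-f(y)|^p$ by an integral of $|\partial f|^p$, Tonelli, then conversion of the inner $y$-integral into $t^{p\alpha_p}p_{c_2t}(x,z)$ using the lower bound in \eqref{eq:sub-Gaussian1}) but differs at the key decomposition. The paper starts from the \emph{ball} version of the pointwise estimate, $|f(x)-f(y)|^p\le d(x,y)^{p-1}\int_{\mathcal S\cap B(x,d(x,y))}|\partial f|^p\,d\nu$ (quoting \cite[Theorem 3.13]{BC}); after absorbing $d(x,y)^{p-1}$ via \eqref{eq:boundpolyptbypct} and swapping integrals, the inner integral becomes the tail mass $\int_{X\setminus B(x,d(x,z))}p_{c_4t}(x,y)\,d\mu(y)$, which is bounded by $Ct^{d_h/d_w}p_{c t}(x,z)$ through a standard annular tail estimate followed by the lower sub-Gaussian bound. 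You instead keep the sharper \emph{geodesic} version, which after Tonelli confines $y$ to the set $A_{x,z}=\{y:z\in\gamma(x,y)\}$; there you use the tree identity $d(x,y)=d(x,z)+d(z,y)$ together with the super-additivity of $s\mapsto s^{d_w/(d_w-1)}$ (valid since $d_w/(d_w-1)>1$) to factor the sub-Gaussian, and then trade the $d(x,z)$-exponential for $p_{c_2t}(x,z)$ exactly as the paper trades its tail exponential. Your route avoids the tail estimate over $X\setminus B(x,r)$ at the price of the factorization lemma and the splitting $d(x,y)^{p-1}\le C_p(d(x,z)^{p-1}+d(z,y)^{p-1})$; both are sound, and the exponent bookkeeping $(p-1+d_h)/d_w=p\alpha_p$ is the same in both proofs. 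Two small points to make airtight: justify the extension of the geodesic inequality from $x,y\in\mathcal S$ to all of $X$ (as you note, $\gamma(x,y)\setminus\{x,y\}\subset\mathcal S$, so one can exhaust the geodesic by skeleton subarcs and use monotone convergence), and observe that the various time-dilation constants $c_2$ produced at different steps can be merged into a single one since sub-Gaussian kernels at comparable times dominate one another up to multiplicative constants by \eqref{eq:sub-Gaussian1}.
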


\begin{proof}
Let $f \in W^{1,p}(X)$. Similarly to \cite[Theorem 3.13]{BC}, we have for every $x,y \in X$
\[
| f(y)-f(x)| \le  d(x,y)^{1-\frac{1}{p}}  \left( \int_{\mathcal S \cap B(x,d(x,y)) } |\partial f|^p (z) d\nu(z)\right)^{1/p}.
\]
Therefore, in view of \eqref{eq:boundpolyptbypct} we obtain
\begin{align*}
\int_X  p_{t}(x,y) |f(x)-f(y) |^p d\mu(y)  & \le \int_X  \int_{\mathcal S \cap B(x,d(x,y)) } p_{t}(x,y) d(x,y)^{p-1}  |\partial f|^p (z) d\nu(z) d\mu(y)  \\
 & \le  c_3 t^{  \frac{p-1}{d_w}}  \int_X  \int_{\mathcal S \cap B(x,d(x,y)) } p_{c_4t}(x,y)  |\partial f|^p (z) d\nu(z) d\mu(y) \\
 & \le c_3 t^{  \frac{p-1}{d_w}} \int_{\mathcal S  }\left( \int_{X \setminus B(x,d(x,z))}  p_{c_4t}(x,y) d\mu(y) \right) |\partial f|^p (z) d\nu(z) .
\end{align*}

It remains to bound $\int_{X \setminus B(x,d(x,z))}  p_{c_4t}(x,y) d\mu(y)$.  First observe that for any $r\ge 0$ and $c>0$, there exists $C>0$ such that 
\[
\int_{X\setminus B(x,r)}\exp\biggl(-c\Bigl(\frac{d(x,y)^{d_w}}{t}\Bigr)^{\frac{1}{d_w-1}}\biggr) \,d\mu(y) 
\le C t^{\frac{d_h}{d_w}} \exp\biggl(-\frac{c}2\Bigl(\frac{r^{d_w}}{t}\Bigr)^{\frac1{d_w-1}}\biggr).
\]
See for instance Step 1 for the proof of \cite[Lemma 3.2]{Chen}. 
Denoting now $r=d(x,z)$, we conclude the estimate below
\begin{align*}
    \int_{X \setminus B(x,r)}  p_{c_4t}(x,y)  d\mu(y) &\leq\frac{c_{5}}{t^{d_h/d_w}} \int_{X\setminus B(x,r)}\exp\biggl(-c_{6}\Bigl(\frac{d(x,y)^{d_w}}{t}\Bigr)^{\frac{1}{d_w-1}}\biggr) \,d\mu(y)
    \\ &\le 
    c_7 \exp\biggl(-\frac{c_{6}}{2}\Bigl(\frac{r^{d_w}}{t}\Bigr)^{\frac{1}{d_w-1}}\biggr)
     \\ &\le 
     c_8  t^{\frac{d_h}{d_w}} p_{c_9t}(x,z).
\end{align*}
The proof is completed by recalling that $d_w=d_h+1$.
\end{proof}

The following corollary is an $L^p$ pseudo-Poincar\'e inequality for the heat semigroup.
\begin{lemma}\label{pseudo poincare 2}
Let $p\ge 1$. There exists a constant $C>0$ such that for every $f\in W^{1,p}(X)$ and $t>0$
\[
\| P_tf -f \|_{L^p(X,\mu)}\le C t^{\alpha_p} \| \partial f \|_{L^p(\mathcal S,\nu)}.
\]
\end{lemma}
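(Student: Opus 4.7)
The plan is to deduce this $L^p$ pseudo-Poincar\'e inequality directly from the pointwise version Lemma \ref{lemma pseud point} by a Jensen + Fubini argument, exploiting that the heat semigroup preserves the constants (stochastic completeness, which on $X$ follows from the upper bound in \eqref{eq:sub-Gaussian1} together with the volume doubling property, or directly from the fact that $P_t 1 = 1$ in this setting).

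First I would write
\[
P_t f(x) - f(x) = \int_X p_t(x,y)\bigl(f(y)-f(x)\bigr)\,d\mu(y),
\]
and apply Jensen's inequality with respect to the probability measure $p_t(x,\cdot)\,d\mu$ to obtain the pointwise bound
\[
|P_t f(x) - f(x)|^p \le \int_X p_t(x,y)\,|f(y)-f(x)|^p\,d\mu(y).
\]
Then Lemma \ref{lemma pseud point} controls the right-hand side by
\[
|P_t f(x) - f(x)|^p \le c_1 t^{p\alpha_p}\int_{\mathcal S} p_{c_2 t}(x,z)\,|\partial f|^p(z)\,d\nu(z).
\]

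Next I would integrate over $x \in X$ with respect to $\mu$ and use Tonelli to swap the order of integration:
\[
\|P_t f - f\|_{L^p(X,\mu)}^p \le c_1 t^{p\alpha_p}\int_{\mathcal S}|\partial f|^p(z)\left(\int_X p_{c_2 t}(x,z)\,d\mu(x)\right)d\nu(z).
\]
The inner integral equals $1$ by stochastic completeness (symmetry of the kernel and $P_t 1 = 1$), which gives
\[
\|P_t f - f\|_{L^p(X,\mu)}^p \le c_1 t^{p\alpha_p}\,\|\partial f\|_{L^p(\mathcal S,\nu)}^p,
\]
and taking $p$-th roots yields the claim with $C = c_1^{1/p}$.

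I do not anticipate a serious obstacle: Jensen's inequality is applicable because $p_t(x,\cdot)\,d\mu$ is a probability measure and $s \mapsto |s|^p$ is convex, the pointwise Poincar\'e inequality is already established, and the only nontrivial ingredient is stochastic completeness of $(P_t)$, which is standard here. The only small care point is to check that all integrands are non-negative so that Tonelli's theorem applies without integrability concerns, and that $f$ is sufficiently regular (continuous, since $f \in W^{1,p}(X) \subset D^{1,p}(X)$) for the expressions involved to be well-defined pointwise.
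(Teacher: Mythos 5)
Your proposal is correct and follows essentially the same route as the paper: write $P_tf-f$ as an integral against the heat kernel, apply Jensen/H\"older with respect to the probability measure $p_t(x,\cdot)\,d\mu$, invoke Lemma \ref{lemma pseud point}, and conclude by Fubini. The only cosmetic difference is that you invoke stochastic completeness to get $\int_X p_{c_2t}(x,z)\,d\mu(x)=1$, whereas the bound $\int_X p_{c_2t}(x,z)\,d\mu(x)\le 1$ (automatic for the sub-Markovian semigroup of a Dirichlet form) already suffices.
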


\begin{proof}
From Lemma \ref{lemma pseud point} and H\"older's inequality, we have
\begin{align*}
\| P_t f -f \|^p_{L^p(X,\mu)} 
 &=\int_X \left|\int_X p_t(x,y)(f(y)-f(x)) d\mu(y) \right|^p d\mu(x) \\
 &\le \int_X \int_X p_t(x,y) |f(y)-f(x)|^p d\mu(y) d\mu(x) \\
 &\le C t^{p\alpha_p} \int_X \int_{\mathcal{S}} p_{c t}(x,z) | \partial f(z)|^p  d\nu (z) d\mu(x) \\
 &\le C t^{p\alpha_p} \int_{\mathcal{S}} | \partial f(z)|^p  d\nu (z),
\end{align*}
where we used Fubini's theorem in the last step.
\end{proof}

We are now ready for applications to Sobolev inequalities.

\begin{theorem}\label{Nash Vicsek} 
For $p>1$, the following Nash inequality holds for every $f \in W^{1,p}(X)$,
\[
\| f \|_{L^p(X,\mu)} \le C \| \partial f \|_{L^p(\mathcal S,\nu)}^{\theta} \| f \|^{1-\theta}_{L^1(X,\mu)}
\]
where $\theta=\frac{(p-1)d_h}{p-1+pd_h}$. 
\end{theorem}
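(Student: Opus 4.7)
The plan is to derive the Nash inequality by the classical semigroup interpolation trick: split $f$ as $f = (f - P_t f) + P_t f$, estimate the two pieces in $L^p$ using the pseudo-Poincaré inequality from Lemma \ref{pseudo poincare 2} and an ultracontractivity bound coming from the heat kernel upper estimate, and finally optimize over $t>0$.

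For the first piece, Lemma \ref{pseudo poincare 2} gives directly
\[
\| f - P_t f \|_{L^p(X,\mu)} \le C\, t^{\alpha_p}\, \| \partial f \|_{L^p(\mathcal S,\nu)},
\]
with $\alpha_p = (1-2/p)/d_w + 1/p$. For the second piece, the sub-Gaussian upper bound in \eqref{eq:sub-Gaussian1} yields $\|P_t\|_{L^1\to L^\infty}\le C t^{-d_h/d_w}$, and combining this with the trivial $L^1$-contractivity of $P_t$ via Riesz--Thorin produces
\[
\| P_t f \|_{L^p(X,\mu)} \le C\, t^{-\frac{d_h}{d_w}\left(1-\frac{1}{p}\right)} \| f \|_{L^1(X,\mu)}.
\]
Setting $A = \| \partial f \|_{L^p(\mathcal S,\nu)}$, $B = \| f\|_{L^1(X,\mu)}$, and $\beta = \frac{d_h(p-1)}{p d_w}$, we have shown
\[
\| f \|_{L^p(X,\mu)} \le C\bigl( t^{\alpha_p} A + t^{-\beta} B \bigr).
\]

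Optimizing in $t$ (choosing $t^{\alpha_p+\beta} \simeq B/A$) yields $\| f \|_{L^p(X,\mu)} \le C\, A^{\beta/(\alpha_p+\beta)} B^{\alpha_p/(\alpha_p+\beta)}$, so $\theta = \beta/(\alpha_p+\beta)$. Using $d_w = d_h+1$, a short computation gives $\alpha_p = (p-1+d_h)/(pd_w)$ and $\alpha_p + \beta = (p - 1 + p d_h)/(p d_w)$, hence
\[
\theta = \frac{d_h(p-1)}{p-1+p d_h},
\]
which is exactly the claimed exponent.

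The only non-routine point to verify is the ultracontractive bound on $P_t$: it requires going from the pointwise heat kernel upper bound to the $L^1 \to L^\infty$ operator norm bound, but this is standard (bound the convolution kernel by its sup and apply Riesz--Thorin with $\|P_t\|_{L^1\to L^1} \le 1$). Everything else is algebra, so no serious obstacle is expected.
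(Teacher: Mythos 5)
Your argument is correct: the pseudo-Poincar\'e bound of Lemma \ref{pseudo poincare 2} applied to $f-P_tf$, the ultracontractive bound $\|P_t\|_{L^1\to L^\infty}\le Ct^{-d_h/d_w}$ from \eqref{eq:sub-Gaussian1} together with $L^1$-contractivity (which holds since $(P_t)$ is a symmetric Markovian semigroup; one can also just use H\"older, $\|P_tf\|_{L^p}\le \|P_tf\|_{L^\infty}^{1-1/p}\|P_tf\|_{L^1}^{1/p}$, and skip Riesz--Thorin), and the optimization in $t$ do produce exactly $\theta=\beta/(\alpha_p+\beta)=\frac{(p-1)d_h}{p-1+pd_h}$; the only housekeeping you should add is the trivial cases $\|f\|_{L^1(X,\mu)}=+\infty$ or $\|\partial f\|_{L^p(\mathcal S,\nu)}=0$ (in the latter case let $t\to+\infty$ to get $f=0$), after which the minimizing $t$ is well defined. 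This is, however, a genuinely different route from the paper: there, the same two analytic inputs (Lemma \ref{pseudo poincare 2} and ultracontractivity) are supplemented by two truncation properties of the seminorm $\|\partial\cdot\|_{L^p(\mathcal S,\nu)}$ under cutoffs $(f-t)_+\wedge s$, and the conclusion is obtained by invoking the general machinery of Bakry--Coulhon--Ledoux--Saloff-Coste \cite[Theorem 9.1]{MR1386760}. The paper's route buys the whole scale of Gagliardo--Nirenberg inequalities at once (the truncation hypotheses are what that theorem needs), while your direct splitting $f=(f-P_tf)+P_tf$ is more elementary and self-contained, needs no truncation lemma, and yields precisely the stated Nash inequality with the correct (scaling-consistent) exponent.
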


\begin{proof}
We first note the following two easily proved properties which indicate that the Sobolev norm behaves nicely under cutoffs. In what follows we denote $a\wedge b=\min (a,b)$ and $a_+=\max (a,0)$.
\begin{itemize}
\item For every $s,t \ge 0$, and $f\in W^{1,p}(X)$,  
\[
\int_\mathcal{S} |\partial((f-t)_+ \wedge s )|^p d\nu \le \int_\mathcal{S} | \partial f|^p d\nu.
\]
\item  For any non-negative $f \in W^{1,p}(X)$  and any $\rho>1$,
\[
 \sum_{k \in \mathbb{Z}} \int_\mathcal{S} |\partial f_{\rho,k}|^p d\nu \le  \int_\mathcal{S} |\partial f|^p d\nu,
\]
where $f_{\rho,k}:=(f-\rho^k)_+ \wedge \rho^k(\rho-1)$, $k \in \mathbb{Z}$. 
\end{itemize}
On the other hand, it immediately follows from the heat kernel upper bound that for any $f \in L^1(X,\mu)$ one has
\[
\| P_t f \|_{L^\infty(X,\mu)} \le \frac{C}{t^{d_h/d_w}} \| f \|_{L^1(X,\mu)}
\]
Remarkably, together with Lemma \ref{pseudo poincare 2}, the previous observations are enough to obtain the full scale of Gagliardo-Nirenberg inequalities and in particular the stated Nash inequalities. The results follow from applying the results of \cite[Theorem 9.1]{MR1386760} (see also \cite{MR4196573}).

\end{proof}

\begin{remark}
For $p=2$, Nash inequalities  have been studied in connection with heat kernel estimates in the more general context of p.c.f. fractals. In particular, in the case $p=2$, Theorem \ref{Nash Vicsek}  recovers \cite[Theorem 8.3]{Barlow} on the Vicsek set.
\end{remark}

One can obtain further Sobolev type inequalities from the previous results.

\begin{theorem}\label{frac Riesz}
For $p \ge 1$, $0<s<\alpha_p$, there exists a constant $C>0$ such that for every $f\in W^{1,p}(X)$
\begin{align}\label{MI1}
\|(-\Delta)^s f\|_{L^p(X,\mu)}\le C \| f\|^{1-\frac{s}{\alpha_p}}_{L^p(X,\mu)} \| \partial f\|_{L^p(\mathcal S,\nu)}^{\frac{s}{\alpha_p}}.
\end{align}
For $p \ge 1$, $s>\alpha_p$, let $f\in L^p(X, \mu)$ be such that $(-\Delta)^s f\in L^p(X, \mu)$. Then, $f \in W^{1,p}(X)$ and there exists a constant $C>0$ such that 
\begin{align}\label{MI2}
\| \partial f \|_{L^p(\mathcal S,\nu)} \le C \| f \|_{L^p(X,\mu)}^{1-\frac{\alpha_p}{s}} \| (-\Delta)^sf \|_{L^p(X,\mu)}^{\frac{\alpha_p}{s}}.
\end{align}  
\end{theorem}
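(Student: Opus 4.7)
My plan is to prove both \eqref{MI1} and \eqref{MI2} by a common ``split-and-optimize'' strategy based on the two pillars established in the preceding subsections: the pseudo-Poincar\'e estimate $\|f-P_tf\|_{L^p(X,\mu)}\le Ct^{\alpha_p}\|\partial f\|_{L^p(\mathcal S,\nu)}$ of Lemma \ref{pseudo poincare 2}, and the $L^p\to W^{1,p}$ gradient bound $\|\partial P_tf\|_{L^p(\mathcal S,\nu)}\le Ct^{-\alpha_p}\|f\|_{L^p(X,\mu)}$ from Theorem \ref{Lp bound gt}. Observe that $\alpha_p<1$ in all cases, so the constraint $0<s<\alpha_p$ in \eqref{MI1} automatically places $s$ in $(0,1)$, the range in which the Balakrishnan subordination formula is available.

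For \eqref{MI1}, I would start from
\[
(-\Delta)^sf=\frac{s}{\Gamma(1-s)}\int_0^\infty(f-P_tf)\,t^{-1-s}\,dt,\qquad s\in(0,1),
\]
take $L^p$-norms, and split the integral at a free parameter $T>0$. On $(0,T)$, Lemma \ref{pseudo poincare 2} yields a contribution of $CT^{\alpha_p-s}\|\partial f\|_{L^p(\mathcal S,\nu)}$ (integrable at $0$ because $s<\alpha_p$), while on $(T,\infty)$ the $L^p$-contractivity of $P_t$ gives $CT^{-s}\|f\|_{L^p(X,\mu)}$. Choosing $T^{\alpha_p}\simeq \|f\|_{L^p(X,\mu)}/\|\partial f\|_{L^p(\mathcal S,\nu)}$ balances the two terms and produces \eqref{MI1}.

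For \eqref{MI2}, I would first handle the range $\alpha_p<s\le 1$. From the identity $f=P_Tf+\int_0^T(-\Delta)P_tf\,dt$ and the factorization $(-\Delta)P_t=(-\Delta)^{1-s}P_t\cdot(-\Delta)^s$, one gets
\[
\partial f=\partial P_Tf+\int_0^T\partial(-\Delta)^{1-s}P_t\bigl[(-\Delta)^sf\bigr]\,dt.
\]
Writing $P_t=P_{t/2}P_{t/2}$ and combining \eqref{eq:Lp-Wp} with the standard analytic-semigroup estimate $\|(-\Delta)^{1-s}P_u\|_{L^p\to L^p}\le Cu^{-(1-s)}$ yields $\|\partial(-\Delta)^{1-s}P_tg\|_{L^p(\mathcal S,\nu)}\le Ct^{s-1-\alpha_p}\|g\|_{L^p(X,\mu)}$, whose $t$-integral on $(0,T)$ converges precisely because $s>\alpha_p$. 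Combined with $\|\partial P_Tf\|_{L^p(\mathcal S,\nu)}\le CT^{-\alpha_p}\|f\|_{L^p(X,\mu)}$, this gives
\[
\|\partial f\|_{L^p(\mathcal S,\nu)}\le CT^{-\alpha_p}\|f\|_{L^p(X,\mu)}+CT^{s-\alpha_p}\|(-\Delta)^sf\|_{L^p(X,\mu)},
\]
and the balancing choice $T^s\simeq \|f\|_{L^p(X,\mu)}/\|(-\Delta)^sf\|_{L^p(X,\mu)}$ produces \eqref{MI2}. For $s>1$, I would invoke the standard moment inequality for analytic semigroups, $\|(-\Delta)^\beta f\|_{L^p}\le C\|f\|_{L^p}^{1-\beta/s}\|(-\Delta)^sf\|_{L^p}^{\beta/s}$, pick an intermediate exponent $\beta\in(\alpha_p,1)$, apply the case just proven at exponent $\beta$, and multiply the two inequalities.

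The delicate technical step will be rigorously justifying the commutation $\partial\int_0^T\cdots dt=\int_0^T\partial\cdots dt$ in the proof of \eqref{MI2}, since the operator norm of $\partial(-\Delta)^{1-s}P_t$ from $L^p(X,\mu)$ to $L^p(\mathcal S,\nu)$ blows up like $t^{s-1-\alpha_p}$ as $t\downarrow 0$. I plan to handle this by first running the argument on the truncated interval $(\varepsilon,T)$, where closedness of $\partial$ (Proposition \ref{proposition closed}) together with continuity of $t\mapsto (-\Delta)^{1-s}P_tg$ into $L^p(X,\mu)$ make the exchange routine, and then passing to the limit $\varepsilon\downarrow 0$ using the uniform integrable bound just derived. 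A secondary but minor caveat is that the analytic-semigroup estimate and the moment inequality are cleanest in the range $p\in(1,\infty)$ (Stein's theorem on symmetric Markov semigroups), so the endpoints $p\in\{1,\infty\}$ may require a separate duality or regularization argument.
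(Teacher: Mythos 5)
Your proposal is correct in substance, and for \eqref{MI1} it coincides with the paper's argument: the paper simply defers to the proof of Proposition 4.23 in the reference \cite{MR4075578}, which is exactly your Balakrishnan splitting $(-\Delta)^s f=\frac{s}{\Gamma(1-s)}\int_0^\infty (f-P_tf)t^{-1-s}dt$ combined with Lemma \ref{pseudo poincare 2} on $(0,T)$, contractivity on $(T,\infty)$, and optimization in $T$. For \eqref{MI2} your route is genuinely different. The paper follows \cite{CoulhonSikora} and subordinates the \emph{resolvent power}: writing $\partial(I-t\Delta)^{-s}f=\frac{1}{\Gamma(s)}\int_0^\infty r^{s-1}e^{-r}\,\partial P_{rt}f\,dr$ and feeding in $\|\partial P_{rt}f\|_{L^p(\mathcal S,\nu)}\le C(rt)^{-\alpha_p}\|f\|_{L^p(X,\mu)}$ from Theorem \ref{Lp bound gt}, the integral converges precisely because $s>\alpha_p$, giving $\|\partial(I-t\Delta)^{-s}\|_{L^p\to L^p}\le Ct^{-\alpha_p}$ for \emph{all} $s>\alpha_p$ in one stroke; applying this to $(I-t\Delta)^sf$, using $L^p$-boundedness of $(I-t\Delta)^s(I+(-t\Delta)^s)^{-1}$ (by analyticity), and optimizing in $t$ finishes the proof. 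Your decomposition $f=P_Tf+\int_0^T(-\Delta)^{1-s}P_t[(-\Delta)^sf]\,dt$ achieves the same balance of exponents but is confined to $s\le 1$ and forces the extra bootstrap through the moment inequality for $s>1$, as well as the $\varepsilon$-truncation to justify commuting $\partial$ with a singular integral. The paper's version avoids the case split and is slightly cleaner on that interchange, since the Bochner integral in $r$ is convergent against the weight $r^{s-1-\alpha_p}e^{-r}$ and closedness of $\partial$ (Proposition \ref{proposition closed}) applies directly; on the other hand, both arguments rest on analyticity of $P_t$ on $L^p$ (for the fractional resolvent identity in the paper, for $\|(-\Delta)^{1-s}P_u\|_{L^p\to L^p}\le Cu^{-(1-s)}$ in yours), so your caveat about the endpoint $p=1$ applies equally to the published proof.
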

\begin{proof}
  The inequality \eqref{MI1} follows from  Lemma \ref{pseudo poincare 2} and the proof of \cite[Proposition 4.23]{MR4075578}.
For the inequality \eqref{MI2}, we adapt the approach of the proof of \cite[Lemma 4.5]{CoulhonSikora}.  For any $s>\alpha_p$, Theorem \ref{Lp bound gt} yields that for every $f\in L^p(X, \mu)$
\[
\|\partial(I-t\Delta)^{-s} f\|_{L^p(\mathcal S,\nu)} \le C t^{-\alpha_p} \| f\|_{L^p(X,\mu)}.
\]
Indeed, this can be observed by writing
\[
\partial (I-t\Delta)^{-s}f =\frac1{\Gamma(s)} \int_0^{\infty} r^{s-1}e^{-r}\partial P_{rt}f dr.
\]
Hence we have
\[
\|\partial f\|_{L^p(\mathcal S,\nu)} 
\le C t^{-\alpha_p}\|(I-t\Delta)^{s}f\|_{L^p(X,\mu)}
\le C t^{-\alpha_p}\left(\|f\|_{L^p(X,\mu)}+t^s\|(-\Delta)^s f\|_{L^p(X,\mu)}\right),
\]
where the second inequality follows from the fact that the operator $(I-t\Delta)^s(I+(-t\Delta)^s)^{-1}$ is bounded on $L^p(X,\mu)$ by analyticity.
The proof is concluded  by choosing
\[
t=\|f\|_{L^p(X,\mu)}^{\frac1s} \|(-\Delta)^s f\|_{L^p(X,\mu)}^{-\frac1s}.
\]
\end{proof}

The case $s=\alpha_p$ in \eqref{MI1} and \eqref{MI2} is left open, and  the following open question is therefore natural:

\begin{conjecture}[Boundedness of the Riesz transform]
Does the following hold: The operator $\partial (-\Delta)^{-\alpha_1}$ is of weak type $(1,1)$ and for $p >1$ there exist constants $c,C>0$ such that for every $f \in W^{1,p}(X)$,
\[
c \| \partial f \|_{L^p(\mathcal S,\nu)} \le \| (-\Delta)^{\alpha_p} f \|_{L^p(X,\mu)} \le C  \| \partial f \|_{L^p(\mathcal S,\nu)} \, ?
\]
\end{conjecture}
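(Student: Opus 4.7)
The plan is to prove the conjecture by analyzing the generalized Riesz transform $R := \partial(-\Delta)^{-\alpha_p}$ as a singular integral operator mapping $L^p(X,\mu)$ to $L^p(\mathcal{S},\nu)$. The case $p=2$ is automatic: since $\alpha_2=1/2$, combining the definition of the Dirichlet form with Lemma \ref{laplace codiff} yields $\|\partial f\|_{L^2(\mathcal{S},\nu)}^2 = \mathcal{E}(f,f) = \|(-\Delta)^{1/2} f\|_{L^2(X,\mu)}^2$. The strategy is to bootstrap this $L^2$ identity to the full range $1 < p < \infty$ and the weak-type $(1,1)$ endpoint, using the heat kernel gradient estimates of Corollaries \ref{LipP_t} and \ref{gb heat 1}, and to obtain the reverse inequality by duality.

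For the bound $\|\partial f\|_{L^p(\mathcal{S},\nu)} \le C \|(-\Delta)^{\alpha_p} f\|_{L^p(X,\mu)}$, the subordination formula
\[
Rg(x) = \frac{1}{\Gamma(\alpha_p)} \int_0^\infty t^{\alpha_p-1} \partial P_t g(x)\, dt
\]
represents $R$ with kernel $K(x,y) = \frac{1}{\Gamma(\alpha_p)} \int_0^\infty t^{\alpha_p - 1} \partial_x p_t(x,y)\, dt$ for $x\in\mathcal S$, $y\in X$. Using Corollary \ref{gb heat 1} together with \eqref{eq:sub-Gaussian1}, a change of variables $u=d(x,y)^{d_w}/t$ should yield the size estimate $|K(x,y)|\lesssim d(x,y)^{-d_w(1-\alpha_p)}$, which, combined with the $d_h$-Ahlfors regularity of $\mu$, scales correctly with balls of radius $d(x,y)$. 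A H\"older-type regularity of $K$ in its second variable should follow from Theorem \ref{Lipschitz pt} applied to $\partial_x p_t$ together with semigroup manipulations. With these size and regularity bounds and the $L^2$ identity above, $R$ should fit a Calder\'on-Zygmund framework adapted to the metric measure space $(X,d,\mu)$, yielding weak type $(1,1)$ and $L^p$ boundedness for $1 < p < 2$; the range $p\ge 2$ follows by interpolation or by treating the dual operator analogously.

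For the reverse inequality $\|(-\Delta)^{\alpha_p} f\|_{L^p(X,\mu)} \le C\|\partial f\|_{L^p(\mathcal{S},\nu)}$, I would use the factorization $(-\Delta)^{\alpha_p}f = (-\Delta)^{\alpha_p-1}\partial^* \partial f$ provided by Lemma \ref{laplace codiff} and reduce to the $L^p$-boundedness of $(-\Delta)^{\alpha_p-1}\partial^*:L^p(\mathcal S,\nu)\to L^p(X,\mu)$, which is the formal dual of $R$. Lemma \ref{poinc_div} should serve as the key tool to transfer estimates from $\nu$ back to $\mu$ at the endpoint, replacing the integration-by-parts identity available on smooth manifolds.

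The main obstacle is the mismatch between $\mu$ and $\nu$. Standard Calder\'on-Zygmund theory is designed for operators mapping $L^p(\mu)$ into itself, whereas here the range carries the measure $\nu$ which is singular with respect to $\mu$ and not Radon. This creates delicate issues: the Calder\'on-Zygmund decomposition of the input must be compatible with $\nu$-integration of the kernel restricted to $\mathcal S$, and the lack of a carr\'e du champ prevents the Bakry-\'Emery-type commutations that typically underpin Riesz transform estimates. In addition, $s=\alpha_p$ is precisely the critical exponent where the inequalities \eqref{MI1} and \eqref{MI2} of Theorem \ref{frac Riesz} degenerate logarithmically, so the endpoint cannot be obtained by a soft limiting argument from the subcritical bounds; a refined Littlewood-Paley-Stein $g$-function argument adapted to the sub-Gaussian regime and to the singular measure $\nu$ seems to be required. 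These combined obstacles explain why the question is left open in the paper.
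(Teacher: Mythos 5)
This statement is posed in the paper as an \emph{open question}, not a theorem: the paper offers no proof, and your text is a strategy outline rather than a proof --- indeed you concede at the end that the obstacles you list ``explain why the question is left open.'' So there is nothing to compare against, and the honest verdict is that the conjecture remains unproved by your proposal. Several ingredients of your sketch are nevertheless sound: the $p=2$ case is indeed immediate, since $\alpha_2=\tfrac12$ and $\|\partial f\|_{L^2(\mathcal S,\nu)}^2=\mathcal E(f,f)=\|(-\Delta)^{1/2}f\|_{L^2(X,\mu)}^2$ by Lemma \ref{laplace codiff} and spectral calculus; the kernel size computation is correct (using Corollary \ref{gb heat 1} and $d_h+1=d_w$ one gets $|K(x,y)|\lesssim d(x,y)^{-d_w(1-\alpha_p)}$ with the integral converging because $\alpha_p<1$); and the duality reduction for the reverse inequality is consistent, since $1-\alpha_p=\alpha_{p'}$, so $(-\Delta)^{\alpha_p-1}\partial^*$ is the adjoint of $\partial(-\Delta)^{-\alpha_{p'}}$.

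Beyond the (correctly identified) two-measure obstruction, there is a structural gap you do not flag and which by itself sinks the proposed Calder\'on--Zygmund scheme: the exponent $\alpha_p$ depends on $p$, so $R_p=\partial(-\Delta)^{-\alpha_p}$ is a \emph{different operator for each} $p$. Marcinkiewicz or Riesz--Thorin interpolation applies to a single operator, so you cannot interpolate between an $L^2$ bound for $R_2$ and a putative weak $(1,1)$ bound for $R_1$ to obtain $L^p$ bounds for $R_p$. Worse, the $L^2$ anchor needed to launch the CZ machinery for $R_p$ with $p\neq 2$ does not exist: by the $L^2$ identity, $\|\partial(-\Delta)^{-s}f\|_{L^2(\nu)}=\|(-\Delta)^{1/2-s}f\|_{L^2(\mu)}$, and on the unbounded Vicsek set ($0$ in the spectrum, infinite measure) this is bounded by $\|f\|_{L^2(\mu)}$ only at $s=\tfrac12$. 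In particular the weak $(1,1)$ claim for $\partial(-\Delta)^{-\alpha_1}$, with $\alpha_1=1-\tfrac1{d_w}\neq\tfrac12$, has no $L^2$ starting point at all. Any successful attack would have to work at a fixed $p$ from scratch (e.g.\ via vertical square functions or a $p$-adapted Hardy space theory compatible with the singular, non-Radon measure $\nu$), which is precisely why the authors leave the question open.
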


\section{Hodge Laplacian and associated heat kernel}

The goal of the section is to introduce a Hodge Laplacian and a Hodge semigroup and to prove that the associated heat kernel admits a sub-Gaussian upper bound.  

\subsection{Hodge Laplacian}

In view of the  map $\partial: W^{1,p}(X,\mu) \to L^p(\mathcal S,\nu)$ where $\partial$ is thought of as a differential, it is natural to think of elements of $L^p(\mathcal S,\nu)$ as $p$ integrable one-forms, see \cite{MR3896108,MR1986156,MR2964679} for related discussions in the general setting of Dirichlet spaces. With this in mind, we define the Hodge Laplacian on $X$ by $\vec\Delta =- \partial\partial^*$ with the domain
\[
\dom\vec\Delta = \left\{\eta \in L^2(\mathcal S,\nu)~|~\partial^* \eta \in \dom\partial=W^{1,2}(X)\right\}.
\]
As already pointed out in Section \ref{sec:3.3}, $\partial^*$ is a densely defined and closed operator. Therefore, from a Von Neumann's theorem (confer \cite[Theorem 8.4]{Tay} or the proof of Theorem VIII.32 in \cite{RS72}),  the operator $\vec\Delta=- (\partial^*)^*\partial^*$ is self-adjoint in $L^2(\mathcal S,\nu)$. Alternatively, since $\partial^*$ is closed, we may also see $\vec\Delta$  as the self-adjoint generator of the closed symmetric bilinear form on $L^2(\mathcal S,\nu)$:
\[
\vec{\eng} (\omega, \eta)=\int_X \partial^* \omega \partial^* \eta d\mu.
\]

 It is worth noting that $\vec{\eng}$ is not a Dirichlet form (it is not Markovian). We shall denote the Hodge semigroup  $e^{t\vec \Delta}$ in $L^2(\mathcal S,\nu)$ by $\vec{P}_t$. It is classically  defined via the spectral theorem by using functional calculus (see for instance Theorem 4.15 in  \cite{Bau14}). The fundamental property of the Hodge semigroup is the following intertwining formula.

\begin{theorem}\label{intertwining1}
For $f \in  W^{1,2}(X)$, we have
\[
  \partial P_t f=\vec{P}_t \partial f, \quad t \ge 0.
\]
\end{theorem}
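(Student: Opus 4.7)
The plan is to deduce the intertwining from a resolvent version of it, which is algebraically immediate given the factorizations $\Delta = -\partial^*\partial$ (Lemma \ref{laplace codiff}) and $\vec\Delta = -\partial\partial^*$. The underlying formal calculation is $\partial\Delta = -\partial\partial^*\partial = \vec\Delta\partial$, so that applying $\partial$ to the heat equation $\partial_t P_t f = \Delta P_t f$ should yield the Hodge heat equation for $\partial P_t f$ with initial datum $\partial f$, forcing $\partial P_t f = \vec P_t \partial f$ by uniqueness.

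Concretely, I would first prove the resolvent identity
\[
\partial (\lambda - \Delta)^{-1} f = (\lambda - \vec\Delta)^{-1} \partial f, \qquad \lambda > 0,\ f \in W^{1,2}(X).
\]
Setting $u = (\lambda - \Delta)^{-1} f \in \dom \Delta$, Lemma \ref{laplace codiff} gives $\lambda u + \partial^*\partial u = f$. The key observation is that $\partial^*\partial u = f - \lambda u$ lies in $W^{1,2}(X) = \dom\partial$ (using $f \in W^{1,2}(X)$ and $\dom\Delta \subset W^{1,2}(X)$), which is exactly the requirement that $\partial u \in \dom \vec\Delta$. Applying $\partial$ to the resolvent equation then produces $\lambda(\partial u) + \partial\partial^*\partial u = \partial f$, i.e., $(\lambda - \vec\Delta)(\partial u) = \partial f$, which is the claim. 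To pass to the semigroup statement, I would use the Hille--Yosida exponential formula $P_t f = \lim_n [\tfrac{n}{t}(\tfrac{n}{t}-\Delta)^{-1}]^n f$ together with its analogue for $\vec P_t$, iterating the resolvent intertwining $n$ times and passing to the limit using closedness of $\partial$ (Proposition \ref{proposition closed}). Equivalently, one may observe that $t \mapsto \partial P_t f$ and $t \mapsto \vec P_t \partial f$ are two continuous $L^2(\mathcal S,\nu)$-valued functions with common Laplace transform $(\lambda - \vec\Delta)^{-1}\partial f$, and therefore coincide.

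The main subtlety is the domain chase in the resolvent step: forcing $\partial u$ into $\dom \vec\Delta$ requires $\Delta u = \lambda u - f$ to belong to $W^{1,2}(X)$, which is precisely why the hypothesis $f \in W^{1,2}(X)$ (rather than merely $f \in L^2(X,\mu)$) appears in the theorem. This same hypothesis is what makes the left-hand side $\partial P_t f$ well-defined, since $P_t$ is a contraction on $\dom\mathcal E = W^{1,2}(X)$ in the form norm and hence preserves $W^{1,2}(X)$. Beyond this, the argument is soft, depending only on the self-adjointness of $\Delta$ and $\vec\Delta$ and the closedness of $\partial$.
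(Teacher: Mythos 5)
Your proof is correct, and it takes a genuinely different route from the paper. The paper works directly at the semigroup level: it differentiates $t\mapsto \partial P_t f$, uses the commutation $\partial \Delta = \vec\Delta\,\partial$ to show that $\Psi(t)=\partial P_t f - \vec P_t \partial f$ solves the Hodge heat equation with $\Psi(0)=0$, and then kills $\Psi$ by an energy/monotonicity argument, computing $\frac{d}{dt}\int_{\mathcal S}\Psi(t)^2\,d\nu = -2\int_X (\partial^*\Psi(t))^2\,d\mu\le 0$. You instead prove the resolvent intertwining $\partial(\lambda-\Delta)^{-1}f=(\lambda-\vec\Delta)^{-1}\partial f$ by a domain chase through the factorizations $\Delta=-\partial^*\partial$ and $\vec\Delta=-\partial\partial^*$, and then transfer it to the semigroups via the Hille--Yosida exponential formula (or Laplace-transform uniqueness) together with the closedness of $\partial$ from Proposition \ref{proposition closed}. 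Your domain verification is exactly right: $u=(\lambda-\Delta)^{-1}f\in\dom\Delta$ gives $\partial u\in\dom\partial^*$ by Lemma \ref{laplace codiff}, and $\partial^*\partial u=f-\lambda u\in W^{1,2}(X)$ precisely because $f\in W^{1,2}(X)$, which puts $\partial u$ in $\dom\vec\Delta$. What your approach buys is that all differentiation in $t$ is avoided and every domain membership is checked explicitly; in particular you never need to justify that $\partial P_tf$ is differentiable in $t$ or lies in $\dom\vec\Delta$, which is the step the paper treats somewhat informally by appealing to spectral theory and to the abstract commutation of Shigekawa. The cost is an extra limiting argument (iterating the resolvent identity and passing to the limit through the closed operator $\partial$, using $\mathcal E(P_tf,P_tf)\le\mathcal E(f,f)$ for the needed uniform bounds), whereas the paper's energy argument concludes in two lines once the heat equation for $\Psi$ is accepted. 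Both arguments are sound and rest on the same structural inputs, namely the two factorizations of the Laplacians and the closedness of $\partial$.
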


\begin{proof}
Since we have $\partial \Delta =\vec{\Delta} \partial $ on the space $\cap_{k \ge 1} \mathrm{dom}(\Delta^k)$, this type of commutation  is standard, see Theorem 3.1 in  Shigekawa \cite{Shi00} for statement in a more general abstract setting. Indeed, for $f \in L^2(X,\mu)$ and $t >0$, one has from spectral theory that $\Delta P_t f \in W^{1,2}(X)$. We have then for $t>0$
\[
\frac{d }{d t} \partial P_t f =\partial  \Delta P_t f =\vec{\Delta} \partial P_t f.
\]
Here, we see $t \to \partial P_t f$ as a curve $\mathbb{R}_{\ge 0} \to L^2(\mathcal S,\nu)$. Denote now $\Psi (t)=\partial P_t f  -\vec{P}_t \partial f \in \mathrm{dom} (\vec \Delta)$. We note that
\[
\frac{d }{d t}\Psi(t)=\vec \Delta \Psi (t), \quad \Psi (0)=0.
\]
 Let $\alpha(t)= \int_\mathcal{S} \Psi (t)^2 d\nu$. We have  $$\alpha'(t)= 2\int_\mathcal{S} \Psi (t) \frac{d }{d t} \Psi(t)d\nu =2\int_\mathcal{S} \Psi(t) \vec \Delta \Psi (t) d\nu=-2 \int_\mathcal{S} \partial^* \Psi (t)^2 d\nu \le 0.$$ Therefore $\Psi$ is non-increasing and non-negative. Since $\Psi (0)=0$, we get that $\Psi (t)=0$ for every $t \ge 0$.
\end{proof}

\subsection{Hodge heat kernel upper  bound}

Our goal in this section is to prove the following result.

\begin{theorem}\label{bound hodge2}
The Hodge semigroup $\vec{P}_t$ admits a heat kernel, that is, for every $t>0$ there exists a measurable function $\vec{p}_t (x,y):  \mathcal S \times \mathcal S \to \mathbb R$ such that for every $\eta \in L^2 (\mathcal S,\nu)$, and $\nu$ a.e. $x \in \mathcal {S}$,
\[
\vec{P}_t \eta (x)=\int_\mathcal S \vec{p}_t (x,y) \eta(y) d\nu (y).
\]
Moreover, there exist $c_1,c_2>0$ such that for every $t >0$ and $\nu \otimes \nu$-a.e. $x,y \in \mathcal S$
    \[
 | \vec{p}_t (x,y)| \le  \frac{c_1}{t^{1/d_w}}  \exp\biggl(-c_2\Bigl(\frac{d(x,y)^{d_{w}}}{t}\Bigr)^{\frac{1}{d_{w}-1}}\biggr)
 \]
 and there exists a constant $c_3>0$ such that for every $t >0$ and $\nu$-a.e $x \in \mathcal{S}$
 \[
  \int_{\mathcal S} | \vec{p}_t (x,y)| d\nu(y) \le c_3.
 \]
\end{theorem}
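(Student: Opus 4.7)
My plan is to guess an explicit formula for the candidate kernel using the intertwining $\vec P_t \partial = \partial P_t$ of Theorem \ref{intertwining1} together with the tree structure of $X$, and then to establish the pointwise and integrated bounds using the sharpened weak Bakry-\'Emery inequality of Remark \ref{remark median}. For each $y \in \mathcal S$ let $B_y := \{z \in X : y \in \gamma(0, z)\}$ be the subtree past $y$ (away from the origin $0$); because $X$ is a tree, removing the single point $y$ disconnects $X$ into $B_y$ and $X \setminus B_y$. Since $1_{B_y} \in L^\infty(X, \mu)$, Corollary \ref{LipP_t} makes
\[
\vec p_t(x, y) := \partial_x P_t 1_{B_y}(x)
\]
well-defined for every $y \in \mathcal S$ and $\nu$-a.e.\ $x \in \mathcal S$. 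To verify this is the kernel of $\vec P_t$, I would test against $\eta = \partial f$ for $f$ in the piecewise-affine core of the Dirichlet form (so that $\partial f$ is supported on finitely many edges of some $\bar V_n$ and is $\nu$-integrable). The identity $f(z) = f(0) + \int_{\mathcal S} \partial f(y)\, 1_{B_y}(z)\, d\nu(y)$ holds by definition of $B_y$; Fubini then gives $P_t f(x) = f(0) + \int_{\mathcal S} \partial f(y)\, P_t 1_{B_y}(x)\, d\nu(y)$, and integrating along an edge $\mathbf{e}(u,v)$ and swapping orders once more produces $\int_{\mathbf{e}(u,v)} \partial P_t f\, d\nu(x) = \int_{\mathbf{e}(u,v)} \int_{\mathcal S} \partial f(y)\, \vec p_t(x,y)\, d\nu(y)\, d\nu(x)$; uniqueness of the weak gradient combined with Theorem \ref{intertwining1} yields $\vec P_t \partial f(x) = \int_{\mathcal S} \vec p_t(x, y)\, \partial f(y)\, d\nu(y)$ for $\nu$-a.e.\ $x$. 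By density (Proposition \ref{dense range}) and the $L^2$ bound on $\vec p_t(x,\cdot)$ implied by the two estimates below, this extends to every $\eta \in L^2(\mathcal S, \nu)$.

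For the pointwise sub-Gaussian bound, I would apply Remark \ref{remark median} to $f = 1_{B_y}$ with both $L = 0$ and $L = 1$ and retain the smaller estimate:
\[
|\vec p_t(x, y)| \le \frac{C}{t^{1/d_w}} \min \bigl( P_{ct} 1_{B_y}(x),\ P_{ct} 1_{X \setminus B_y}(x) \bigr).
\]
Because $y$ is the unique cut point between $B_y$ and $X \setminus B_y$ in the tree, whichever of these two sets does not contain $x$ lies entirely at distance $\ge d(x, y)$ from $x$; bounding its $P_{ct}$-mass by the tail integral of the upper bound in \eqref{eq:sub-Gaussian1} produces the desired $\exp(-c(d(x,y)^{d_w}/t)^{1/(d_w-1)})$ factor.

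The uniform $L^1$ bound is the step I expect to be the most delicate, since the sub-Gaussian factor alone is \emph{not} $\nu$-integrable ($\nu(B(x, r)) = +\infty$ for every $r > 0$). My plan is to integrate the last displayed bound in $y$ and swap orders via Tonelli. Splitting $\mathcal S$ according to whether $y \in \gamma(0, x)$, the tree structure produces, for each $z \in X$,
\[
\nu(\{y \in \gamma(0, x) : z \in X \setminus B_y\}) = d(m_{x,z}, x), \qquad \nu(\{y \in \mathcal S \setminus \gamma(0, x) : z \in B_y\}) = d(m_{x,z}, z),
\]
where $m_{x,z}$ is the point at which $\gamma(0, z)$ separates from $\gamma(0, x)$. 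Both quantities are bounded by $d(x, z)$, so each piece reduces to $\int_X p_{ct}(x, z)\, d(x, z)\, d\mu(z) \le C t^{1/d_w}$, an immediate consequence of \eqref{eq:boundpolyptbypct}; summing the two contributions yields $\int_{\mathcal S} |\vec p_t(x, y)|\, d\nu(y) \le C$ uniformly in $t$ and $\nu$-a.e.\ $x$.
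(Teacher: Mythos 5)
Your proposal is correct in substance but follows a genuinely different route from the paper. The paper never writes the kernel down: it first proves the pointwise bound $|\partial P_t f|^p\le Ct^{1-2/d_w}\int_{\mathcal S}p_{ct}(\cdot,y)|\partial f(y)|^pд\nu(y)$ via Remark \ref{remark median} and the pseudo-Poincar\'e inequality of Lemma \ref{lemma pseud point}, transfers it to $\vec P_t$ through the intertwining and the approximation Lemma \ref{approxi L4}, deduces $L^1\to L^\infty$ boundedness of $\vec P_t$, invokes an abstract existence result for kernels of such operators, and only then recovers the pointwise sub-Gaussian bound and the $L^1$ bound by testing against indicators of finite-length sets (the latter via a separate $L^\infty\to L^\infty$ bound, Lemma \ref{lemma grad pt}). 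You instead exploit the tree structure to produce the explicit formula $\vec p_t(x,y)=\partial_x P_t 1_{B_y}(x)$, which makes the two estimates almost immediate: the sub-Gaussian bound falls out of the median trick of Remark \ref{remark median} applied with $L=0,1$ together with the fact that $y$ is a cut point, and the $L^1$ bound follows from Tonelli and the exact computation of $\nu(\gamma(0,x)\triangle\gamma(0,z))\le d(x,z)$, replacing the paper's detour through Lemmas \ref{approxi L4}, \ref{bound funda fg}, \ref{cont r3} and \ref{lemma grad pt}. What the explicit formula buys is transparency and shorter proofs of both bounds; what the paper's abstract route buys is independence from the tree geometry (it would survive in settings where no such cut-point decomposition exists).

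Two technical points you should tighten. First, your Fubini/Tonelli swaps and the very definition of $\vec p_t$ as a kernel require \emph{joint} measurability of $(x,y)\mapsto\partial_x P_t1_{B_y}(x)$, whereas Corollary \ref{gb heat 1}-type statements only give, for each fixed $y$, a function of $x$ defined off a $y$-dependent null set; this is repairable by noting that $(x,y)\mapsto P_t1_{B_y}(x)=\int_X p_t(x,z)1_{\gamma(0,z)}(y)\,d\mu(z)$ is jointly measurable and that on each edge the weak derivative in $x$ is the a.e.\ limit of difference quotients, but it should be said. Second, the identity $P_tf(x)=f(0)+\int_{\mathcal S}\partial f(y)P_t1_{B_y}(x)\,d\nu(y)$ uses $P_t1=1$ (stochastic completeness of the unbounded Vicsek set), which is true but nowhere stated in the paper and should be justified (it follows, e.g., from the two-sided bounds \eqref{eq:sub-Gaussian1} or from conservativeness criteria for the Dirichlet form). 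Neither point affects the validity of the argument.
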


We split the proof into several lemmas.

\begin{lemma}\label{bound funda fg}
 Let $p \ge 1$. There exist constants $c,C>0$ such that for every $t >0$, $f \in W^{1,p}(X)$, and $\nu$ a.e. $x \in \mathcal S$,
\[
|\partial P_t f(x) |^p\le C t^{1-\frac{2}{d_w}} \int_\mathcal{S} p_{ct}(x,y) |\partial f (y)|^p  d\nu(y).
\]
\end{lemma}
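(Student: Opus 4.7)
The plan is to bootstrap the improved weak Bakry--\'Emery estimate of Remark \ref{remark median} to an integrated bound by combining it with Jensen's inequality and the pseudo-Poincar\'e inequality of Lemma \ref{lemma pseud point}.

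First, I would apply Remark \ref{remark median} with the pointwise choice $L = f(x)$, which is admissible because the $\nu$-null exceptional set there is independent of $L$. This gives, at $\nu$ a.e.\ $x \in \mathcal{S}$,
\[
|\partial P_t f(x)| \le \frac{C}{t^{1/d_w}} P_{ct}(|f - f(x)|)(x) = \frac{C}{t^{1/d_w}} \int_X p_{ct}(x,y) |f(y) - f(x)| d\mu(y).
\]
Since the heat semigroup is conservative, the kernel $p_{ct}(x,\cdot)\,d\mu$ is a probability measure, so Jensen's inequality applied to the convex function $u \mapsto u^p$ yields
\[
|\partial P_t f(x)|^p \le \frac{C^p}{t^{p/d_w}} \int_X p_{ct}(x,y)\,|f(y) - f(x)|^p\,d\mu(y).
\]

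Next I would invoke Lemma \ref{lemma pseud point} (at time $ct$ in place of $t$) to dominate the right-hand side by $C'\,t^{p\alpha_p}\!\int_\mathcal{S} p_{c'' t}(x,z)\,|\partial f(z)|^p\,d\nu(z)$. Combining the two displays yields the claim, with the time exponent collapsing via
\[
-\frac{p}{d_w} + p\alpha_p = -\frac{p}{d_w} + \frac{p-2}{d_w} + 1 = 1 - \frac{2}{d_w},
\]
which is exactly the prefactor $t^{1-2/d_w}$ in the statement.

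The one subtle point, and what I would flag as the main obstacle in a careful write-up, is justifying the pointwise choice $L = f(x)$ in the Bakry--\'Emery-type bound (a constant cannot a priori depend on $x$). This is precisely what the uniformity-in-$L$ observation in Remark \ref{remark median} is designed to permit: the bound holds on a common $\nu$-full set valid for every $L$ simultaneously, and specializing $L = f(x)$ on that set is then unambiguous. Everything else reduces to invoking already established results.
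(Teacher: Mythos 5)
Your proposal is correct and follows essentially the same route as the paper's proof: apply the improved weak Bakry--\'Emery bound of Remark \ref{remark median} with $L=f(x)$, use Jensen's (equivalently H\"older's) inequality with the probability measure $p_{ct}(x,\cdot)\,d\mu$, and conclude with Lemma \ref{lemma pseud point}; your exponent bookkeeping is also right. The subtlety you flag about choosing $L=f(x)$ pointwise is exactly what the uniformity-in-$L$ observation in Remark \ref{remark median} is there to handle, and the paper uses it in the same way.
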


\begin{proof}
Let $f \in W^{1,p}(X)$. From Remark \ref{remark median} we  get that for $\nu$ a.e. $x\in \mathcal S$
\begin{align*}
|\partial P_t f(x)|&   \le  \frac{C}{t^{1/d_w}}   \int_X p_{ct}(x,y) |f(x)-f(y) | d\mu(y) \\
 &\le  \frac{C}{t^{1/d_w}}  \left(  \int_X p_{ct}(x,y) |f(x)-f(y) |^p d\mu(y)  \right)^{1/p}
\end{align*}
and we conclude the proof by applying Lemma \ref{lemma pseud point}.
\end{proof}

\begin{lemma}\label{cont r3}
Let $p \ge 1$. There exist constants $c,C>0$ such that for every $t >0$, $\eta \in L^1(\mathcal S,\nu) \cap L^\infty(\mathcal S,\nu)$, and $\nu$ a.e. $x \in \mathcal S$,
\[
|\vec P_t \eta (x) |^p\le C t^{1-\frac{2}{d_w}} \int_\mathcal{S} p_{ct}(x,y) |\eta (y)|^p d\nu(y).
\]
\end{lemma}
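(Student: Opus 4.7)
The plan is to combine the intertwining identity of Theorem \ref{intertwining1} with Lemma \ref{bound funda fg} and an approximation argument. Observe that if $\eta = \partial f$ for some $f \in W^{1,p}(X)$, then by Theorem \ref{intertwining1} one has $\vec{P}_t \eta = \partial P_t f$, and Lemma \ref{bound funda fg} immediately delivers the claimed estimate. The real task is therefore to pass from such $\eta$'s to arbitrary $\eta \in L^1(\mathcal S,\nu) \cap L^\infty(\mathcal S,\nu)$.

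For the approximation step, I would apply Lemma \ref{approxi L4} with exponent $1$, obtaining $\phi_n \in W^{1,q}(X)$ for every $q \in [1,\infty]$ with $\partial \phi_n \to \eta$ in $L^q(\mathcal S,\nu)$ for $1 < q < \infty$, the uniform bound $\|\partial \phi_n\|_{L^\infty} \le 2\|\eta\|_{L^\infty}$, and---from the explicit formula $\partial \phi_n = h_n \eta + \partial h_n \int_{\gamma(0,\cdot)}\eta\, d\nu$ used in its proof---pointwise convergence $\partial \phi_n(y) \to \eta(y)$ for every $y \in \mathcal S$. Applying Lemma \ref{bound funda fg} to each $\phi_n$ and invoking the intertwining $\partial P_t \phi_n = \vec{P}_t(\partial \phi_n)$ yields
\[
|\vec{P}_t(\partial \phi_n)(x)|^p \le C t^{1-2/d_w} \int_{\mathcal S} p_{ct}(x,y) |\partial \phi_n(y)|^p d\nu(y)
\]
for $\nu$-a.e.\ $x$ (and, after taking the union of exceptional sets, for all $n$ simultaneously). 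Since $\vec{P}_t$ is a contraction on $L^2(\mathcal S,\nu)$ by spectral theory and $\partial \phi_n \to \eta$ in $L^2$, one gets $\vec{P}_t(\partial \phi_n) \to \vec{P}_t \eta$ in $L^2$, hence pointwise $\nu$-a.e.\ along a subsequence; this handles the left-hand side.

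The main difficulty is the passage to the limit on the right-hand side. As emphasized at the beginning of Section \ref{sec:3.3}, $p_{ct}(x,\cdot)$ fails to belong to $L^p(\mathcal S,\nu)$ for $\nu$-a.e.\ $x$ and any $p \ge 1$, so the trivial bound $|\partial \phi_n|^p \le (2\|\eta\|_{L^\infty})^p$ is not integrable against $p_{ct}(x,\cdot)\, d\nu$ and standard dominated convergence is unavailable. To circumvent this I would exploit the specific form of the approximation: writing
\[
|\partial \phi_n - \eta| \le (1-h_n)|\eta| + \|\eta\|_{L^\infty}\, 1_{A_n}, \qquad A_n := 3^n \bar{W}_0 \setminus 3^{n-1}\bar{W}_0,
\]
and using the elementary inequality $\bigl||a|^p - |b|^p\bigr| \le p(2\|\eta\|_{L^\infty})^{p-1}|a-b|$ (valid since both $|\partial \phi_n|$ and $|\eta|$ are bounded by $2\|\eta\|_{L^\infty}$), it suffices to show that each of the two summands above is negligible after integration against $p_{ct}(x,\cdot)\, d\nu$. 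The first summand is dominated by $|\eta|$ and tends to $0$ pointwise; since $p_{ct}(x,\cdot)|\eta|$ is integrable (as $\|p_{ct}\|_{L^\infty}\|\eta\|_{L^1(\mathcal S,\nu)} < \infty$), standard dominated convergence applies. The second requires $\int_{A_n} p_{ct}(x,y)\, d\nu(y) \to 0$, which follows from the sub-Gaussian upper bound \eqref{eq:sub-Gaussian1}: for large $n$ and $y \in A_n$ one has $d(x,y) \ge 3^{n-1} - d(0,x)$, so the stretched-exponential factor beats the linear growth $\nu(A_n) = O(3^n)$. Letting $n \to \infty$ in the inequality for $\partial \phi_n$ then produces the claim for $\nu$-a.e.\ $x$.
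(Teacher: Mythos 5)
Your proposal is correct and follows essentially the same route as the paper: intertwining plus Lemma \ref{bound funda fg} for $\eta$ in the range of $\partial$, then the approximation $\phi_n$ from Lemma \ref{approxi L4}, with the delicate point being the limit on the right-hand side since $p_{ct}(x,\cdot)\notin L^1(\mathcal S,\nu)$. The only (harmless) deviations are that you pass to the limit on the left via the $L^2$-contraction property and a subsequence rather than the paper's $L^2\to L^\infty$ bound, and you treat all $p\ge 1$ uniformly through the decomposition $(1-h_n)|\eta|+\|\eta\|_{L^\infty}1_{A_n}$ where the paper splits the cases $p>1$ and $p=1$.
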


\begin{proof}
Let $p \ge 1$. In view of Theorem \ref{intertwining1} and Lemma \ref{bound funda fg},  we  obtain that for every $f \in W^{1,p}(X)\cap W^{1,2}(X)$ there holds for $\nu$ a.e. $x \in \mathcal S$,
\begin{align}\label{eq: ref con1}
| \vec{P}_t \partial f (x)|^p\le c_1 t^{1-\frac{2}{d_w}} \int_\mathcal{S} p_{c_2t}(x,y) |\partial f(y)| ^p d\nu(y) .
\end{align}    
In particular, from the heat kernel upper bound in \eqref{eq:sub-Gaussian1} one has for every $f \in W^{1,p}(X)\cap W^{1,2}(X)$
\begin{align}\label{eq: ref con2}
\| \vec{P}_t \partial f \|_{L^\infty(\mathcal S,\nu)}^p\le \frac{C}{t^{1/d_w}} \| \partial f \|^p_{L^p(\mathcal S, \nu)} .
\end{align}  
With $p=2$, using the continuity of $\vec P_t :L^2(\mathcal S, \nu)\to L^2(\mathcal S, \nu)$ and the fact that $\partial: W^{1,2}(X) \to   L^2(\mathcal S,\nu) $ has a dense range (see Proposition \ref{dense range}), we deduce first that  $\vec{P}_t$ is a bounded operator $L^2(\mathcal S, \nu) \to L^\infty(\mathcal S,\nu)$ with
\[
\| \vec{P}_t \|^2_{L^2 \to L^\infty}\le \frac{C}{t^{1/d_w}}.
\]

Let now $\eta \in L^1(\mathcal S,\nu) \cap L^\infty(\mathcal S,\nu)$ and consider the approximating sequence $\phi_n$ defined in \ Lemma \ref{approxi L4}. We have for every $n$ and $\nu$ a.e. $x \in \mathcal S$
\begin{align}\label{pol th}
| \vec{P}_t \partial \phi_n (x)|^p\le c_1 t^{1-\frac{2}{d_w}} \int_\mathcal{S} p_{c_2t}(x,y) |\partial \phi_n(y)| ^p d\nu(y). 
\end{align}
Since $\partial \phi_n$ converges to $\eta$ in $L^2(\mathcal S,\nu)$, from the continuity of $\vec{P}_t :L^2(\mathcal S, \nu) \to L^\infty(\mathcal S,\nu)$, we get that the left hand side in \eqref{pol th} $\nu$ a.e. converges to $| \vec{P}_t \eta (x)|^p$. On the other hand, the right hand side of \eqref{pol th} $\nu$ a.e. converges to $c_1 t^{1-\frac{2}{d_w}} \int_\mathcal{S} p_{c_2t}(x,y) |\eta(y)| ^p d\nu(y)$. For $p>1$, this simply follows from the fact that $\partial \phi_n$ converges to $\eta$ in $L^p(\mathcal S,\nu)$ and for $p=1$ this can be checked using \eqref{eq:partial phi} and the estimate
\begin{align*}
& \int_\mathcal{S} p_{c_2t}(x,y) \left|\partial h_n (y)\int_{\gamma (0,y)} \eta d\nu \right| d\nu(y)  \\
\le & \int_\mathcal{S} 3^{-n} 1_{3^n\bar{W}_0\setminus 3^{n-1}\bar{W}_0 }(y) p_{c_2t}(x,y) d\nu(y)  \|\eta \|_{L^1(\mathcal S,\nu)}
\\ 
\le & C t^{-\frac{d_h}{d_w}} 3^{-n} \|\eta \|_{L^1(\mathcal S,\nu)} \int_{3^n\bar{W}_0\setminus 3^{n-1}\bar{W}_0} \exp\biggl(-C\Bigl(\frac{d(x,y)^{d_{w}}}{t}\Bigr)^{\frac{1}{d_{w}-1}}\biggr)d\nu(y) \\
& \longrightarrow_{n \to +\infty}  0.
\end{align*}
One concludes that for $\nu$ a.e. $x \in \mathcal S$,
\[
|\vec P_t \eta (x) |^p\le C t^{1-\frac{2}{d_w}} \int_\mathcal{S} p_{ct}(x,y) |\eta (y)|^p d\nu(y).
\]

\end{proof}

\begin{lemma}
 For every $t>0$ there exists a measurable function $\vec{p}_t(x,y)$ on $\mathcal{S}\times \mathcal{S}$ with
\[
|\vec{p}_t(x,y)| \le \frac{c_1}{t^{1/d_w}} \exp\biggl(-c_2\Bigl(\frac{d(x,y)^{d_{w}}}{t}\Bigr)^{\frac{1}{d_{w}-1}}\biggr)
\]
for $\nu \otimes \nu$ a.e. $(x,y) \in \mathcal{S} \times \mathcal{S}$,  such that for every $\eta\in L^1(\mathcal S,\nu)\cap L^2(\mathcal S,\nu)$ and $\nu$ a.e. $x \in \mathcal S$
\[
\vec{P}_t \eta (x)=\int_\mathcal S \vec{p}_t (x,y) \eta(y) d\nu (y).
\]
\end{lemma}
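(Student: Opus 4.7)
The plan is to combine Lemma \ref{cont r3} with the sub-Gaussian upper bound \eqref{eq:sub-Gaussian1} to dominate $\vec P_t$ by an integral operator with a jointly measurable kernel of the correct Gaussian form, and then extract the kernel $\vec p_t$ itself via $L^1$-$L^\infty$ duality applied at each fixed $x$.

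Applying Lemma \ref{cont r3} with $p=1$, inserting the upper bound on $p_{ct}(x,y)$ from \eqref{eq:sub-Gaussian1} and using the identity $d_w = d_h + 1$ to simplify the resulting power of $t$, one obtains that for every $\eta \in L^1(\mathcal S,\nu) \cap L^\infty(\mathcal S,\nu)$ and $\nu$-a.e. $x \in \mathcal S$,
\[
|\vec P_t \eta(x)| \le \int_{\mathcal S} M_t(x,y) \,|\eta(y)| \, d\nu(y),
\]
where
\[
M_t(x,y) := \frac{c_1}{t^{1/d_w}} \exp\biggl(-c_2\Bigl(\frac{d(x,y)^{d_w}}{t}\Bigr)^{1/(d_w-1)}\biggr)
\]
is jointly measurable in $(x,y)$ and uniformly bounded in $y$ by $c_1/t^{1/d_w}$. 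Hence $\eta \mapsto \vec P_t \eta(x)$ defines a continuous linear functional on $L^1(\mathcal S,\nu)$ for $\nu$-a.e. $x$, and by the $L^1$-$L^\infty$ duality (which applies since $\nu$ is $\sigma$-finite) there exists $\vec p_t(x,\cdot) \in L^\infty(\mathcal S,\nu)$ representing it. Testing the domination inequality against $\eta(y) = \mathrm{sign}(\vec p_t(x,y)) 1_A(y)$ for measurable sets $A$ of finite $\nu$-measure yields $|\vec p_t(x,y)| \le M_t(x,y)$ for $\nu$-a.e. $y$, which is the stated Gaussian-type pointwise estimate. The representation formula then extends from $\eta \in L^1 \cap L^\infty$ to $\eta \in L^1 \cap L^2$ by density, using the $L^2$-continuity of $\vec P_t$ (from spectral calculus) on the left-hand side and the $L^\infty$-bound on $\vec p_t(x,\cdot)$ together with $\eta \in L^1$ to ensure absolute convergence on the right-hand side.

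The main obstacle is establishing joint measurability of $\vec p_t$, since the Riesz representation at each $x$ only produces an $L^\infty$-equivalence class of densities in $y$. To handle this I would fix a countable dense family $\{\eta_n\}$ in $L^1(\mathcal S, \nu) \cap L^\infty(\mathcal S, \nu)$ consisting of simple functions on the algebra $\mathcal A$ generated by the edges $\mathbf e(u,v)$: for each $n$ the function $x \mapsto \vec P_t \eta_n(x)$ is measurable, so after discarding a common $\nu$-null set the set-function $A \mapsto \int_A \vec p_t(x,y) d\nu(y)$ depends measurably on $x$ for every $A \in \mathcal A$. A monotone class argument extends this measurability to all Borel sets $A$, and a parametric Radon-Nikodym construction using the dominating kernel $M_t(x,y) d\nu(y)$ produces a jointly measurable version of $\vec p_t(x,y)$ still satisfying $|\vec p_t(x,y)| \le M_t(x,y)$ for $\nu \otimes \nu$-a.e. $(x,y) \in \mathcal S \times \mathcal S$.
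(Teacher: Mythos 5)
Your proposal is correct and takes essentially the same route as the paper: the paper likewise derives the $L^1\to L^\infty$ domination from Lemma \ref{cont r3}, invokes Proposition 1.2.5 of \cite{BGL} to produce a measurable kernel bounded by the operator norm (the step you carry out by hand via $L^1$--$L^\infty$ duality together with a joint-measurability argument), and then upgrades the uniform bound to the pointwise sub-Gaussian bound by testing the domination inequality against indicators of finite-length subsets of $\mathcal S$. The only differences are expository; your extension of the representation formula from $L^1\cap L^\infty$ to $L^1\cap L^2$ by density matches what the paper obtains directly from the extension of $\vec P_t$ to $L^1$.
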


\begin{proof}
From Lemma \ref{cont r3}, $\vec{P}_t$ extends from $L^1(\mathcal{S},\nu) \cap L^\infty(\mathcal{S},\nu) $ to $L^1(\mathcal{S},\nu)$ as a bounded operator $L^1(\mathcal{S},\nu) \to L^\infty(\mathcal{S},\nu)$, still denoted by $\vec{P}_t$, that satisfies
\[
\| \vec{P}_t \|_{L^1 \to L^\infty}\le \frac{C}{t^{1/d_w}}.
\]
From Proposition 1.2.5 in \cite{BGL}, we deduce that for every $t>0$ there exists a measurable function $\vec{p}_t(x,y)$ on $\mathcal{S}\times \mathcal{S}$ with
\[
|\vec{p}_t(x,y)| \le \frac{C}{t^{1/d_w}}
\]
for $\nu \otimes \nu$ a.e. $(x,y) \in \mathcal{S} \times \mathcal{S}$ such that for every $\eta\in L^1(\mathcal S,\nu)$ and $\nu$ a.e. $x \in \mathcal S$
\[
\vec{P}_t \eta (x)=\int_\mathcal{S} \vec{p}_t(x,y) \eta(y) d\nu(y).
\]
From Lemma \ref{cont r3}, we infer  that for every $\eta \in  L^1(\mathcal S,\nu) \cap  L^\infty(\mathcal S,\nu) $
\[
\left |\int_\mathcal{S} \vec{p}_t (x,y)  \eta  (y) d\nu(y) \right| \le  C t^{1-\frac{2}{d_w}} \int_\mathcal{S} p_{ct}(x,y) | \eta | (y) d\nu(y)
\]
and the conclusion follows by testing this inequality for all $\eta=1_{A}$, where $A$ is any finite length subset of $\mathcal S$.

\end{proof}

\begin{lemma}\label{lemma grad pt}
There exists a constant $C>0$ such that for every $t \ge 0$,  and $f \in W^{1,\infty} (X)$
\[
\| \partial P_t f \|_{L^\infty (\mathcal S,\nu)} \le C  \| \partial  f \|_{L^\infty (\mathcal S,\nu)}.
\]
\end{lemma}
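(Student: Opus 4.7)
The plan is to apply the sharpened semigroup gradient bound of Remark \ref{remark median}, which asserts that there is a $\nu$-null exceptional set (depending only on $f$) outside of which
\[
|\partial P_t f(x)| \le \frac{C}{t^{1/d_w}}\, P_{ct}(|f - L|)(x) \quad \text{for every } L \in \mathbb{R}.
\]
The key observation is that, because this exceptional set is common to all $L$, at a good point $x$ one is allowed to pick $L := f(x)$ pointwise. This yields
\[
|\partial P_t f(x)| \le \frac{C}{t^{1/d_w}} \int_X p_{ct}(x,y)\, |f(y) - f(x)|\, d\mu(y).
\]

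Next I would use the fact that $f \in W^{1,\infty}(X)$ is Lipschitz with Lipschitz constant $\|\partial f\|_{L^\infty(\mathcal S,\nu)}$, which gives $|f(y) - f(x)| \le \|\partial f\|_{L^\infty(\mathcal S,\nu)}\, d(x,y)$. The remaining integral is controlled by the first-moment bound
\[
\int_X p_{ct}(x,y)\, d(x,y)\, d\mu(y) \le C\, t^{1/d_w},
\]
which follows from \eqref{eq:boundpolyptbypct} with $\kappa = 1$, combined with the stochastic completeness $\int_X p_{c't}(x,y)\, d\mu(y) \le 1$.

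Combining the three displays yields
\[
|\partial P_t f(x)| \le \frac{C \|\partial f\|_{L^\infty(\mathcal S,\nu)}}{t^{1/d_w}} \cdot C\, t^{1/d_w} = C\|\partial f\|_{L^\infty(\mathcal S,\nu)}
\]
for $\nu$-a.e.\ $x \in \mathcal S$, which is exactly the asserted $t$-independent $L^\infty$ contraction of the gradient under $P_t$. The main technical point of this approach is the parameter-independence of the exceptional set in Remark \ref{remark median}: it is essential that a single $\nu$-null set works uniformly in $L \in \mathbb R$, since otherwise substituting $L = f(x)$ pointwise would not be legitimate. This feature is however built into the statement of that remark and is verified by inspection of the proof of Corollary \ref{LipP_t} applied to $f - L$.
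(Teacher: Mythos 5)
Your proof is correct and follows essentially the same route as the paper: it invokes Remark \ref{remark median} with the pointwise choice $L = f(x)$ (legitimate precisely because the exceptional set there does not depend on $L$), applies the Lipschitz bound $|f(y)-f(x)| \le \|\partial f\|_{L^\infty(\mathcal S,\nu)}\, d(x,y)$, and closes with the first-moment estimate $\int_X p_{ct}(x,y)\, d(x,y)\, d\mu(y) \le C t^{1/d_w}$ from \eqref{eq:boundpolyptbypct}. No gaps.
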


\begin{proof}
Let  $f\in  W^{1,\infty}(X)$. From Remark \ref{remark median} we have for any $\nu$ a.e. $x\in \mathcal S$, 
 \[
 | \partial P_t f (x)| \le    \frac{C}{t^{1/d_w}}  P_{ct}(|f-f(x)|)(x).
 \]
 We now  have
 \[
  P_{ct}(|f-f(x)|)(x)=\int_X p_{ct} (x,y) |f(y)-f(x) | d\mu(y).
 \]
 However, since $f$ is Lipschitz  we have $|f(y)-f(x) | \le d(x,y)\| \partial  f \|_{L^\infty (\mathcal S,\nu)} $, therefore we obtain
 \[
 | \partial P_t f (x)| \le    \frac{C}{t^{1/d_w}}  \int_X p_{ct} (x,y) d(x,y) d\mu(y)  \, \| \partial  f \|_{L^\infty (\mathcal S,\nu)}.
 \]
From \eqref{eq:boundpolyptbypct} one has $\int_X p_{ct} (x,y) d(x,y) d\mu(y)  \le C t^{1/d_w}$ and the conclusion follows.  
\end{proof}

\begin{lemma}
    There exists a constant $C>0$ such that for every $t >0$ and $\nu$-a.e $x \in \mathcal{S}$
 \[
  \int_{\mathcal S} | \vec{p}_t (x,y)| d\nu(y) \le C.
 \]
\end{lemma}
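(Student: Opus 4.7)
The plan is to first show that the Hodge semigroup $\vec P_t$ extends to a bounded operator on $L^\infty(\mathcal S,\nu)$ with operator norm bounded uniformly in $t$, and then to translate that $L^\infty\to L^\infty$ bound into the desired pointwise $L^1$ estimate on the kernel by a duality argument.

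For the operator bound, I would combine the intertwining identity of Theorem \ref{intertwining1} with the $L^\infty$ gradient bound of Lemma \ref{lemma grad pt} to obtain
\[
\|\vec P_t(\partial f)\|_{L^\infty(\mathcal S,\nu)} = \|\partial P_t f\|_{L^\infty(\mathcal S,\nu)} \le C\|\partial f\|_{L^\infty(\mathcal S,\nu)}
\]
for every $f\in W^{1,\infty}(X)\cap W^{1,2}(X)$. Given an arbitrary $\eta\in L^1(\mathcal S,\nu)\cap L^\infty(\mathcal S,\nu)$, Lemma \ref{approxi L4} with $p=1$ produces $\phi_n \in W^{1,q}(X)$, $1\le q\le\infty$, with $\partial\phi_n\to\eta$ in $L^2(\mathcal S,\nu)$ and $\|\partial\phi_n\|_{L^\infty(\mathcal S,\nu)}\le 2\|\eta\|_{L^\infty(\mathcal S,\nu)}$. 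Because $\vec P_t$ is continuous on $L^2(\mathcal S,\nu)$, extracting a $\nu$-a.e.\ convergent subsequence from $\vec P_t\partial\phi_n \to \vec P_t \eta$ in $L^2(\mathcal S,\nu)$ yields $|\vec P_t\eta(x)|\le 2C\|\eta\|_{L^\infty(\mathcal S,\nu)}$ for $\nu$-a.e.\ $x\in\mathcal S$.

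To pass from this to the kernel bound I would exploit the $\sigma$-finiteness of $\nu$: choose an increasing exhaustion $K_m\uparrow\mathcal S$ with $\nu(K_m)<\infty$, and for each $m$ a countable subfamily $\mathcal F_m\subset\{\eta\in L^\infty(K_m,\nu): \|\eta\|_\infty\le 1\}$ that is $L^1(K_m,\nu)$-dense (for instance, simple functions with rational values supported on a countable generating algebra). For every $\eta$ in the countable collection $\bigcup_m\mathcal F_m$, both the previous pointwise bound and the kernel representation $\vec P_t\eta(x)=\int_\mathcal S\vec p_t(x,y)\eta(y)\,d\nu(y)$ hold off a $\nu$-null set depending on $\eta$; let $N$ denote the countable union of these null sets, augmented by the null set from the preceding lemma outside of which $\vec p_t(x,\cdot)\in L^\infty(\mathcal S,\nu)$. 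For $x\notin N$ and every $m$, the functional $\eta\mapsto\int_{K_m}\vec p_t(x,y)\eta(y)\,d\nu(y)$ is continuous on $L^1(K_m,\nu)$, so density of $\mathcal F_m$ extends the estimate $|\int_{K_m}\vec p_t(x,y)\eta(y)\,d\nu(y)|\le 2C$ to every $\eta\in L^\infty(K_m,\nu)$ with $\|\eta\|_\infty\le 1$. Substituting $\eta(y)=\mathrm{sgn}(\vec p_t(x,y))\,1_{K_m}(y)$ gives $\int_{K_m}|\vec p_t(x,y)|\,d\nu(y)\le 2C$, and monotone convergence as $m\to\infty$ finishes the argument.

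The principal subtlety lies in this last duality step: because the kernel identity for $\vec P_t\eta$ holds off a null set \emph{that depends on the test function $\eta$}, one cannot naively substitute $\eta(y)=\mathrm{sgn}(\vec p_t(x,y))$, which itself depends on $x$. Reducing to a countable dense family of test functions on each finite-$\nu$-measure piece and carefully tracking null sets is what makes the argument go through.
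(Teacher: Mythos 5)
Your proposal is correct and follows essentially the same route as the paper: the intertwining formula combined with the $L^\infty$ gradient bound of Lemma \ref{lemma grad pt} gives $\|\vec P_t\eta\|_{L^\infty}\le C\|\eta\|_{L^\infty}$ via the approximation sequence of Lemma \ref{approxi L4}, and one then tests against indicators (resp.\ sign functions) of finite-length sets to extract the kernel bound. Your careful handling of the $\eta$-dependent null sets through a countable dense family in the final duality step is a welcome elaboration of a point the paper's proof leaves implicit.
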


\begin{proof}
From Theorem \ref{intertwining1} and Lemma \ref{lemma grad pt} one has for every $f \in W^{1,2}(X) \cap W^{1,\infty}(X)  $
\[
\| \vec {P}_t \partial f \|_{L^\infty(\mathcal S,\nu)} \le C  \| \partial f \|_{L^\infty(\mathcal S,\nu)}.
\]
Let now $\eta \in L^2(\mathcal S,\nu) \cap L^\infty(\mathcal S,\nu)$ and consider the approximating sequence $\phi_n$ of Lemma \ref{approxi L4}. We have  $\vec {P}_t \partial \phi_n$ converges to $\vec {P}_t \eta$ in $L^\infty (\mathcal S,\nu)$ when $n \to +\infty$. This yields
 \[
 \| \vec P_t \eta \|_{L^\infty(\mathcal S,\nu)} \le C \|  \eta \|_{L^\infty(\mathcal S,\nu)}.
 \]
 Therefore for any set $A \subset \mathcal S$ of finite length one has for $\nu$ a.e. $x\in \mathcal S$
 \[
\left| \int_A \vec{p}_t (x,y)  d\nu(y) \right| \le C
 \]
and the result follows.
\end{proof}

\section{Heat kernel gradient  bounds in the  compact Vicsek set}

In this section we discuss gradient bounds for the heat kernel on the compact Vicsek set $K$. Most of the proofs are simpler and simple modifications of  arguments of the previous sections. We denote by $\mathcal S_K=K \cap \mathcal S$ the skeleton of $K$ and otherwise use the notations already introduced in Section \ref{notations Vicsek}. Let $1 \le p\le \infty$. For $f\in C(K)$, we say that $f\in W^{1,p}(K)$ if $f$ can be extended to a function $\tilde f \in W^{1,p}(X)$ on $X$. For $\nu$ a.e. $x \in \mathcal S_K$, we then set $\partial f (x) =\partial \tilde f (x)$. For $f,g \in W^{1,2}(K)$, we define
\[
\mathcal{E}_K(f,g)=\int_{\mathcal S_K} \partial f \,  \partial g \, d\nu.
\]
The following result follows from \cite[Theorem 7.14]{Barlow}.
\begin{theorem}
The quadratic form $\mathcal{E}_K$ with domain $W^{1,2}(K)$ is a strongly local and regular Dirichlet form in $L^2(K,\mu)$. A core for $\mathcal{E}_K$ is the set of piecewise affine functions defined on $K$. Moreover for every $f \in W^{1,2}(K)$
\[
\mathcal{E}_K(f,f)=\lim_{m \to +\infty} \frac{1}{2} 3^{m}  \sum_{x,y \in W_m, x \sim y} |f(x)-f(y)|^2.
\]
\end{theorem}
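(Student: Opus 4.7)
The plan is to follow the template of Theorem 2.6 in the unbounded case, using the fact that $W^{1,2}(K)$ is (by definition) the restriction of $W^{1,2}(X)$ to $K$, so most properties descend from the unbounded analysis. The statement has four ingredients: closedness, the Markovian/strong locality property, regularity with the piecewise affine core, and the discrete approximation formula. Only the last is substantive; the others are essentially soft consequences of the weak gradient formalism.

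First I would establish that $\partial: W^{1,2}(K) \to L^2(\mathcal S_K,\nu)$ is closed by a direct transcription of Proposition \ref{proposition closed}, using the Arzela-Ascoli argument on the compact set $K$. This makes $(\mathcal E_K, W^{1,2}(K))$ a closed, symmetric, densely defined form on $L^2(K,\mu)$. The Markov property is immediate from the chain rule of Proposition \ref{Chain rule}(i): for any normal contraction $\Phi$, $|\partial \Phi(f)| = |\Phi'(f)||\partial f| \le |\partial f|$ pointwise $\nu$-a.e., so $\mathcal E_K(\Phi(f),\Phi(f)) \le \mathcal E_K(f,f)$. Strong locality is equally direct: if $f$ is constant on an open neighborhood of $\mathrm{supp}(g)$, then $\partial f = 0$ $\nu$-a.e.\ on $\mathrm{supp}(g) \cap \mathcal S_K$, and $\int_{\mathcal S_K}\partial f \partial g\,d\nu = 0$.

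For regularity I would verify that the piecewise affine functions form a core. Any piecewise affine function on $K$ lies in $C(K) \cap W^{1,2}(K)$, so it suffices to show $L^2$-density in $C(K)$ (giving regularity) and $\mathcal E_K$-density in $W^{1,2}(K)$ (giving the core property). For the first, given $f \in C(K)$, its interpolant $f_m$ that matches $f$ on $W_m$ and is piecewise affine on $\bar W_m$ converges uniformly to $f$ since $\mathrm{diam}(\bar W_m\setminus W_m) \le C\,3^{-m}$ and $f$ is uniformly continuous. For the second, the same interpolants satisfy $\mathcal E_K(f_m,f_m) \to \mathcal E_K(f,f)$ and $f_m \to f$ in $L^2(K,\mu)$, which is a consequence of the discrete approximation formula below.

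The discrete approximation formula is the heart of the argument. For an $n$-piecewise affine function $\Phi$ and any $m \ge n$, $\Phi$ is affine on each edge $\mathbf{e}(x,y)$ of $\bar W_m$, so $\partial \Phi$ equals $3^m(\Phi(y)-\Phi(x))$ there and
\[
\mathcal E_K(\Phi,\Phi) = \sum_{\mathbf{e}(x,y)\in E_m}3^m|\Phi(y)-\Phi(x)|^2 \cdot 3^{-m}\cdot 3^m = \tfrac12 3^m \sum_{x,y\in W_m,\,x\sim y}|\Phi(x)-\Phi(y)|^2.
\]
For general $f \in W^{1,2}(K)$, denote the right-hand side by $\mathcal E_m(f)$. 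Jensen's inequality on each edge of $\bar W_m$ yields $\mathcal E_m(f) \le \int_{\bar W_m}|\partial f|^2\,d\nu \le \mathcal E_K(f,f)$. For monotonicity in $m$, observe that in the Vicsek construction each edge of $\bar W_m$ is subdivided by two new vertices of $W_{m+1}$ into three sub-edges of length $3^{-m-1}$; then the Cauchy-Schwarz inequality $|a+b+c|^2 \le 3(|a|^2+|b|^2+|c|^2)$ applied to the three increments gives $\mathcal E_m(f) \le \mathcal E_{m+1}(f)$, so $\lim_m \mathcal E_m(f)$ exists. To reach $\mathcal E_K(f,f)$ in the limit, fix $n$ and consider the restriction of $f$ to a single edge of $\bar W_n$: by the classical fact that the $W^{1,2}$-energy of the piecewise linear interpolant of a $W^{1,2}$ function on a segment converges to the true energy as the mesh refines, the contribution of $\bar W_n$ to $\mathcal E_m(f)$ tends to $\int_{\bar W_n}|\partial f|^2\,d\nu$ as $m\to \infty$. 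Letting $n\to\infty$ and using $\mathcal S_K = \bigcup_n \bar W_n$ together with monotone convergence gives $\lim_m \mathcal E_m(f) \ge \mathcal E_K(f,f)$, completing the identity.

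The main obstacle is the last step of the discrete approximation, namely the sharp reverse inequality $\lim_m \mathcal E_m(f) \ge \mathcal E_K(f,f)$. The edge-by-edge one-dimensional interpolation argument is conceptually standard but relies on the 1D Sobolev density of piecewise linear interpolants; alternatively one can invoke Kigami's self-similar construction of Dirichlet forms on p.c.f.\ fractals, which is the route taken via \cite[Theorem 7.14]{Barlow}.
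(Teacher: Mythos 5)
Your proposal is correct in outline, but it takes a genuinely different route from the paper: the paper proves this theorem by a one-line citation of the nested-fractal construction (\cite[Theorem 7.14]{Barlow}, i.e.\ Kigami's self-similar Dirichlet form, whose standard description of the domain and energy is then identified with $W^{1,2}(K)$ and $\mathcal{E}_K$ exactly as in the unbounded case treated in Section 2), whereas you build the form directly from the weak-gradient formalism and verify the Dirichlet-form axioms and the discrete approximation by hand. Your route is more self-contained and makes transparent \emph{why} the discrete energies converge (monotonicity from the three-fold edge subdivision plus the one-dimensional fact that piecewise-linear interpolation energies converge for $W^{1,2}$ functions on an interval); the paper's route gets existence, regularity, and the renormalized limit for free from the general theory but leaves the identification with the weak-gradient energy implicit. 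Two of your steps deserve a word of care, though neither is a fatal gap: (a) the Markov property cannot be read off directly from Proposition \ref{Chain rule}(i), since normal contractions are not $C^1$; you need to approximate the unit contraction by smooth $1$-Lipschitz maps and use closedness (the standard Fukushima--Oshima--Takeda criterion). (b) For the core property, convergence of energies $\mathcal{E}_K(f_m,f_m)\to\mathcal{E}_K(f,f)$ together with $f_m\to f$ in $L^2(K,\mu)$ gives convergence in form norm only after you also identify the weak $L^2(\nu)$-limit of $\partial f_m$ with $\partial f$ (boundedness plus closedness of $\partial$ does this); it is worth stating explicitly rather than asserting energy convergence alone suffices.
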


 Let $\Delta_K $ be the generator of $(\mathcal E_K,W^{1,2}(K))$  on $L^2(K, \mu)$. The associated heat semigroup $(P^K_t)_{t\ge 0}$ admits a heat kernel  that we denote by $p^K_t(x,y)$. 
A difference with respect to the unbounded case is that from spectral theory, the  heat kernel $p_t^K(x,y)$ admits a convergent spectral expansion:
\begin{equation}\label{eq:HKexpansion}
p_t^K(x,y)=1+\sum_{j=1}^{+\infty} e^{-\lambda_j t} \Phi_j(x) \Phi_j(y), \quad t>0, x,y \in K,
\end{equation}
where the $\lambda_j$'s are the eigenvalues of $-\Delta_K$ and the $\Phi_j$'s the corresponding eigenfunctions.

From  \cite[Theorem 8.18]{Barlow} the  heat kernel $p^K_t(x,y)$   satisfies sub-Gaussian estimates in small times.  More precisely, for every $(x,y)\in K\times K$ and $t\in (0,1)$
\begin{equation}\label{eq:sub-Gaussian12}
c_1t^{-\frac{d_{h}}{d_{w}}}\exp\biggl(-c_2\Bigl(\frac{d(x,y)^{d_{w}}}{t}\Bigr)^{\frac{1}{d_{w}-1}}\biggr) \le p^K_t(x,y) 
\le  c_3 t^{-\frac{d_{h}}{d_{w}}}\exp\biggl(-c_4\Bigl(\frac{d(x,y)^{d_{w}}}{t}\Bigr)^{\frac{1}{d_{w}-1}}\biggr).
\end{equation}

\begin{lemma}\label{sub_time2_K}
 There exist constants $c,C>0$ such that for every $t >0$,
\begin{equation}\label{eq:sub-Gaussian2_K}
|\partial_t p^K_t(x,y) |
\le    \frac{C e^{-\lambda_1  t}}{ (1 \wedge t)^{1+d_{h}/d_{w}}}\exp\biggl(-c\Bigl(\frac{d(x,y)^{d_{w}}}{t}\Bigr)^{\frac{1}{d_{w}-1}}\biggr).
\end{equation}
\end{lemma}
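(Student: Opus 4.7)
The plan is to prove the estimate by splitting into the small-time regime $t\in(0,1)$ and the large-time regime $t\ge 1$, and combining. The small-time piece will be a direct adaptation of Lemma \ref{sub_time}, while the large-time piece will exploit the spectral gap $\lambda_1>0$, which is the genuinely new feature of the compact setting.

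For $t\in(0,1)$, one has $1\wedge t=t$ and $e^{-\lambda_1 t}\in[e^{-\lambda_1},1]$, so the claim reduces (up to a multiplicative constant) to the sub-Gaussian time-derivative bound
\[
|\partial_t p^K_t(x,y)|\le C t^{-1-d_h/d_w}\exp\Bigl(-c\bigl(d(x,y)^{d_w}/t\bigr)^{1/(d_w-1)}\Bigr),
\]
which is the exact analog of Lemma \ref{sub_time} for $K$. This follows by applying the same Cauchy-integral/analyticity argument of \cite[Corollary 5]{MR1423289} to the compact sub-Gaussian upper bound \eqref{eq:sub-Gaussian12}; the proof uses only the form of the pointwise estimate on a small time interval and is insensitive to whether the underlying space is compact or unbounded.

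For $t\ge 1$, since $\diam K<+\infty$, the Gaussian factor $\exp\bigl(-c(d(x,y)^{d_w}/t)^{1/(d_w-1)}\bigr)$ is bounded below by a positive constant, so it suffices to prove $|\partial_t p^K_t(x,y)|\le C e^{-\lambda_1 t}$. Using the semigroup identity
\[
p^K_t(x,y)=\int_K p^K_{t-1/2}(x,z)\,p^K_{1/2}(z,y)\,d\mu(z)
\]
and differentiating in $t$ gives
\[
\partial_t p^K_t(x,y)=\int_K g_{x,t}(z)\,p^K_{1/2}(z,y)\,d\mu(z),\qquad g_{x,t}(z):=\partial_s p^K_s(x,z)\big|_{s=t-1/2}.
\]
The crucial point is that $g_{x,t}$ has zero $\mu$-mean, since $\int_K p^K_s(x,z)\,d\mu(z)=1$ for all $s>0$. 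From the spectral expansion \eqref{eq:HKexpansion} one reads off
\[
g_{x,t}(z)=-\sum_{j=1}^{\infty}\lambda_j e^{-\lambda_j(t-1/2)}\Phi_j(x)\Phi_j(z),
\]
and factoring $e^{-\lambda_j(t-1/2)}=e^{-\lambda_j/2}e^{-\lambda_j(t-1)}$ together with $\lambda_j\ge\lambda_1$ for $j\ge 1$ yields
\[
\|g_{x,t}\|_{L^2(K,\mu)}\le e^{-\lambda_1(t-1)}\|h_x\|_{L^2(K,\mu)},
\]
where $h_x(z):=\partial_s p^K_s(x,z)|_{s=1/2}$ is uniformly bounded in $x,z$ by the small-time case already handled. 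A Cauchy--Schwarz application in the integral representation then produces
\[
|\partial_t p^K_t(x,y)|\le \|g_{x,t}\|_{L^2}\,\|p^K_{1/2}(\cdot,y)\|_{L^2}\le C e^{-\lambda_1 t},
\]
using $\|p^K_{1/2}(\cdot,y)\|_{L^2}^2=p^K_1(y,y)\le C$ by symmetry, semigroup property and the heat kernel upper bound.

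The main obstacle I anticipate is the small-time estimate itself: extracting a pointwise time-derivative bound with the correct sub-Gaussian weight from a sub-Gaussian upper bound on $p^K_t$ requires careful use of analyticity of the semigroup in a complex neighborhood of $t$, as in \cite{MR1423289}. The large-time argument is cheap by comparison, relying only on the spectral gap applied to the mean-zero function $g_{x,t}$; this produces the exponential factor $e^{-\lambda_1 t}$, which has no counterpart in Lemma \ref{sub_time}.
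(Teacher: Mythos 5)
Your proposal is correct, and the small-time half ($t\in(0,1)$, where the claim reduces to the exact analogue of Lemma \ref{sub_time} via Davies' analyticity argument applied to \eqref{eq:sub-Gaussian12}) coincides with the paper's. For $t\ge 1$ you take a genuinely different route. The paper stays entirely inside the spectral expansion \eqref{eq:HKexpansion}: it first derives the eigenfunction sup-norm bound $\|\Phi_j\|_{L^\infty(K,\mu)}\le C\lambda_j^{d_h/2d_w}$ by writing $\Phi_j=e^{\lambda_j s}P^K_s\Phi_j$ and optimizing in $s$, and then sums $\sum_j \lambda_j^{1+d_h/d_w}e^{-\lambda_j t}\le Ce^{-\lambda_1 t}$ term by term (summability coming from the finiteness of the heat trace). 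You instead use the Chapman--Kolmogorov factorization $p^K_t=P^K_{1/2}\,p^K_{t-1/2}(x,\cdot)$, the $L^2$ spectral gap applied to the mean-zero function $\partial_s p^K_s(x,\cdot)$, and Cauchy--Schwarz against $\|p^K_{1/2}(\cdot,y)\|_{L^2}^2=p^K_1(y,y)$. Your version avoids pointwise eigenfunction bounds altogether and only uses ultracontractivity at a fixed time, which is arguably more robust; the paper's version produces the quantitative bound \eqref{boundEigenfunctio} on $\|\Phi_j\|_{L^\infty}$ as a by-product, which it then reuses in the proof of the gradient bound for $p^K_t$ (to control $|\Phi_j(x)-\Phi_j(y)|$), so the extra work is not wasted there. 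Two small points you should make explicit if you write this up: the interchange of $\partial_t$ with the integral in the factorization is most cleanly justified by noting that both sides have the same spectral expansion (using $\int_K\Phi_j(z)p^K_{1/2}(z,y)\,d\mu(z)=e^{-\lambda_j/2}\Phi_j(y)$), and the uniform bound on $\|h_x\|_{L^2(K,\mu)}$ uses $\mu(K)<\infty$ together with the already-proved small-time estimate at $s=1/2$.
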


\begin{proof}
For small time $t\in (0,1)$, the bound
\[
|\partial_t p^K_t(x,y) |
\le  \frac{C}{ t^{1+d_{h}/d_{w}}}\exp\biggl(-c\Bigl(\frac{d(x,y)^{d_{w}}}{t}\Bigr)^{\frac{1}{d_{w}-1}}\biggr)
\]
follows from the heat kernel estimates \eqref{eq:sub-Gaussian12}, while for $t \ge 1$ the estimate
\[
|\partial_t p^K_t(x,y) |\le Ce^{-\lambda_1 t}
\]
follows from the spectral expansion \eqref{eq:HKexpansion}. Indeed, one gets from \eqref{eq:HKexpansion}
\[
|\partial_t p^K_t(x,y) |\le \sum_{j=1}^{+\infty}  \lambda_j e^{-\lambda_j t} \| \Phi_j \|^2_{L^\infty (K,\mu)}.
\]
Now $\| \Phi_j \|_{L^\infty (K,\mu)}$ can be bounded by noticing that for any $0<s<1$ 
\[
|\Phi_j(x)|=e^{\lambda_j s} | P_s \Phi_j (x)|=e^{\lambda_j s} \left|\int_K p_s^K (x,y) \Phi_j (y)d\mu (y)\right| \le e^{\lambda_j s}  \sqrt{p_{2s}^K (x,x)}\le C\frac{e^{\lambda_j s}}{s^{d_h/2d_w}},
\]
where we used the sub-Gaussian upper bound \eqref{eq:sub-Gaussian12} in the last inequality.
Choosing $s=\frac{\lambda_1}{\lambda_j}$ yields
\begin{align}\label{boundEigenfunctio}
\| \Phi_j \|_{L^\infty (K,\mu)} \le C \lambda_j^{d_h/2d_w}
\end{align}
and therefore for $t \ge 1$
\[
|\partial_t p^K_t(x,y) |\le C  \sum_{j=1}^{+\infty}  \lambda_j^{1+d_h/d_w} e^{-\lambda_j t} .
\]
The conclusion follows then easily since
\[
\sum_{j=1}^{+\infty}  \lambda_j^{1+d_h/d_w} e^{-\lambda_j t} \le C e^{-\lambda_1 t}.
\]
\end{proof}

The following gradient bound can be proved as Proposition \ref{gb heat 1}.
 
\begin{theorem}
There exist $c,C>0$ such that for every $t >0$, $y \in K$, and $\nu$ a.e $x \in \mathcal S_K$
    \[
 | \partial_x p^K_t(x,y)| \le C \frac{ e^{-\lambda_1 t} }{ (1 \wedge t)^{1/d_w}}  p^K_{ct}(x,y).
 \]
\end{theorem}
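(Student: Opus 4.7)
The plan is to replicate the structure of Corollary \ref{gb heat 1}, but to separate the small-time and large-time regimes, which correspond respectively to the two factors $(1\wedge t)^{-1/d_w}$ and $e^{-\lambda_1 t}$ on the right-hand side of the claim.

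For $t\in(0,1]$, the heat kernel bounds \eqref{eq:sub-Gaussian12} and the time-derivative estimate of Lemma \ref{sub_time2_K} have the same form as their unbounded counterparts, so the resolvent-based argument of Theorem \ref{Lipschitz pt} transfers almost verbatim. The one genuinely new point is the asymptotic $g^K_\lambda(y,y)\simeq \lambda^{d_h/d_w-1}$ for $\lambda$ large, which follows by splitting
\[
g^K_\lambda(y,y)=\int_0^1 e^{-\lambda t}p^K_t(y,y)\,dt+\int_1^\infty e^{-\lambda t}p^K_t(y,y)\,dt;
\]
the first integral reproduces the unbounded asymptotics via \eqref{eq:sub-Gaussian12} and the second is $O(\lambda^{-1}e^{-\lambda})$, hence negligible. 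The resulting Lipschitz estimate
\[
|p^K_t(z,x)-p^K_t(z,y)|\le C\frac{d(x,y)}{t^{1/d_w}}\bigl(p^K_{ct}(z,x)+p^K_{ct}(z,y)\bigr),\quad t\in(0,1],
\]
then yields the gradient bound through the Lebesgue-differentiation argument used in Corollary \ref{gb heat 1}. Since $e^{-\lambda_1 t}$ is bounded above and below on $(0,1]$, this matches the claim in the small-time regime.

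For $t\ge 1$, we use the semigroup decomposition $p^K_t(\cdot,y)=P^K_{1/2}p^K_{t-1/2}(\cdot,y)$ and the facts $\partial 1=0$ and $P^K_{1/2}1=1$ to write
\[
\partial_x p^K_t(x,y)=\partial P^K_{1/2}\bigl(p^K_{t-1/2}(\cdot,y)-1\bigr)(x).
\]
The small-time Lipschitz estimate established above, combined with the argument of Corollary \ref{LipP_t}, provides a compact-case gradient bound $|\partial P^K_{1/2}g(x)|\le C\|g\|_{L^\infty(K,\mu)}$. On the other hand, the spectral expansion \eqref{eq:HKexpansion} together with the eigenfunction bound \eqref{boundEigenfunctio} give
\[
\|p^K_{t-1/2}(\cdot,y)-1\|_{L^\infty(K,\mu)}\le \sum_{j\ge 1}e^{-\lambda_j(t-1/2)}\|\Phi_j\|_{L^\infty(K,\mu)}^2\le C\sum_{j\ge 1}\lambda_j^{d_h/d_w}e^{-\lambda_j(t-1/2)}\le Ce^{-\lambda_1 t}
\]
for $t\ge 1$, where the final inequality follows by factoring out $e^{-\lambda_1(t-1/2)}$ and controlling the residual series via Weyl-type eigenvalue bounds. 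Hence $|\partial_x p^K_t(x,y)|\le Ce^{-\lambda_1 t}$. The claim then follows because $p^K_{ct}(x,y)$ is uniformly bounded below by a positive constant for $t\ge 1$, which in turn follows from the uniform convergence $p^K_s\to 1$ as $s\to\infty$ (again by the spectral expansion) combined with continuity and positivity of $p^K_s$ on compact time intervals.

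The principal obstacle is the small-time step: verifying in the compact setting the resolvent-kernel asymptotics for large $\lambda$ and the accompanying Lipschitz estimate for $g^K_\lambda$ from \cite{Barlow,BP}. Once these are in hand, both the small-time and the large-time parts reduce to essentially bookkeeping, the latter relying only on the eigenfunction bound \eqref{boundEigenfunctio} and the gradient bound on $P^K_{1/2}$.
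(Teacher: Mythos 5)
Your proposal is correct, and the small-time half coincides with the paper's (which likewise reduces $t\in(0,1]$ to a compact-case rerun of Theorem \ref{Lipschitz pt} and Corollary \ref{gb heat 1}, absorbing $e^{-\lambda_1 t}$ into the constant); but your large-time argument is genuinely different from the paper's. The paper also first reduces, via $p^K_{ct}(x,y)\ge c_1$ for $t\ge 1$, to showing $|\partial_x p^K_t(x,y)|\le Ce^{-\lambda_1 t}$, but it then works term by term in the spectral expansion \eqref{eq:HKexpansion}: it bounds $|p^K_t(x,z)-p^K_t(y,z)|\le \sum_j e^{-\lambda_j t}|\Phi_j(x)-\Phi_j(y)|\,|\Phi_j(z)|$, uses \eqref{boundEigenfunctio} for $\|\Phi_j\|_\infty$, and derives a Lipschitz bound $|\Phi_j(x)-\Phi_j(y)|\le C\lambda_j^{(d_h+2)/2d_w}d(x,y)$ for each eigenfunction by writing $\Phi_j=e^{\lambda_j s}P^K_s\Phi_j$ with $s=1/\lambda_j$ and invoking the small-time gradient estimate; summing the resulting series gives the $e^{-\lambda_1 t}$ decay. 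You instead exploit conservativeness and the semigroup splitting $\partial_x p^K_t(\cdot,y)=\partial P^K_{1/2}\bigl(p^K_{t-1/2}(\cdot,y)-1\bigr)$, combining the $L^\infty\to W^{1,\infty}$ bound on $P^K_{1/2}$ (the compact analogue of \eqref{GB-semigroup}, or equivalently Remark \ref{remark median} with $L=1$) with the sup-norm estimate $\|p^K_{t-1/2}(\cdot,y)-1\|_{L^\infty}\le Ce^{-\lambda_1 t}$ from \eqref{eq:HKexpansion} and \eqref{boundEigenfunctio}. Both routes rest on the same two ingredients (the small-time gradient bound and the eigenfunction sup bound), and the spectral sums you need, such as $\sum_j\lambda_j^{d_h/d_w}e^{-\lambda_j/2}<\infty$, follow from the finiteness of the heat trace exactly as the analogous sums the paper uses in Lemma \ref{sub_time2_K}; no genuine Weyl asymptotics are required. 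Your version is arguably a bit cleaner in that it avoids proving a separate Lipschitz estimate for each eigenfunction, at the modest cost of having to note $P^K_{1/2}1=1$ and $\partial 1=0$; the paper's version yields, as a by-product, explicit Lipschitz bounds on the eigenfunctions. Your honest flagging of the real remaining work --- transferring the resolvent asymptotics $g^K_\lambda(y,y)\simeq\lambda^{d_h/d_w-1}$ and the resolvent Lipschitz estimate to the compact setting for large $\lambda$ --- matches what the paper itself leaves to the reader.
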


\begin{proof}
The proof of the estimate
\begin{align}\label{startestimate}
| \partial_x p^K_t(x,y)| \le  \frac{C}{ t^{1/d_w}}  p^K_{ct}(x,y), \quad t >0,
\end{align}
is an appropriate modification of that of Proposition \ref{gb heat 1}, so we omit the details and focus on the exponential decay when $t \to +\infty$. Since for $t \ge 1$  we uniformly have from the spectral expansion \eqref{eq:HKexpansion} that 
 \[
 p^K_{ct}(x,y) \ge c_1
 \]
for some constant $c_1>0$, it is enough to prove that for $t\ge 1$
 \[
|\partial_x p^K_t(x,y) |\le Ce^{-\lambda_1 t}.
\]
To prove the latter we observe that for $x,y,z \in K$
\begin{align*}
 |p^K_t(x,z)-p^K_t(y,z) | & \le \sum_{j=1}^{+\infty} e^{-\lambda_j t} | \Phi_j(x)-\Phi_j(y)| |\Phi_j (z)| \\
 & \le \sum_{j=1}^{+\infty} e^{-\lambda_j t} \| \Phi_j \|_{L^\infty(K,\mu)} |\Phi_j(x)-\Phi_j(y)| \\
 & \le C\sum_{j=1}^{+\infty} \lambda_j^{d_h/2d_w} e^{-\lambda_j t}  |\Phi_j(x)-\Phi_j(y)|
\end{align*}
where we used \eqref{boundEigenfunctio} in the last inequality. On the other hand, we have for any $s>0$
\begin{align*}
|\Phi_j(x)-\Phi_j(y)|&\le e^{\lambda_j s}\int_K |p_s^K (z,x)-p_s^K (z,y)| | \Phi_j (z)| d\mu(z) \\
 &\le C \lambda_j^{d_h/2d_w} e^{\lambda_j s} \int_K |p_s^K (z,x)-p_s^K (z,y)|  d\mu(z) \\
 &\le C \lambda_j^{d_h/2d_w} e^{\lambda_j s} \frac{d(x,y)}{s^{1/d_w}},
\end{align*}
where we used \eqref{startestimate} in the last inequality. With $s=1/\lambda_j$ this gives
\[
|\Phi_j(x)-\Phi_j(y)|\le C \lambda_j^{(d_h+2)/2d_w} d(x,y)
\]
so that
\[
|p^K_t(x,z)-p^K_t(y,z) |\le Cd(x,y) \sum_{j=1}^{+\infty} \lambda_j^{(d_h+1)/d_w}e^{-\lambda_j t}
\]
and the proof is now complete.
\end{proof}

The following result is the analogue of Theorem \ref{eq:Lp-Wp} and can be proved in an identical way.

\begin{theorem}\label{Lp bound gt compact}
Let $1 \le q \le p  \le +\infty$. There exists a constant $C>0$ such that for every $f \in L^q(K,\mu)$ and $t >0$,

\[
\|\partial P^K_t f \|_{L^p(\mathcal S_K,\nu)} \le \frac{Ce^{-\lambda_1 t}}{(1 \wedge t)^{\left( 1-\frac{1}{p}-\frac{1}{q} \right) \frac{1}{d_w}+\frac{1}{q}}} \|f\|_{L^q(K,\mu)}.
\] 
\end{theorem}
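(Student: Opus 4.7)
The plan is to mirror the proof of Theorem \ref{Lp bound gt} step by step, tracking the additional factor $e^{-\lambda_1 t}$ that propagates from the compact heat kernel estimates (Lemma \ref{sub_time2_K} together with the pointwise gradient bound on $p^K_t$ obtained in the preceding theorem). The argument splits into three stages: an $L^1\to L^1$ endpoint bound, an $L^p\to L^\infty$ endpoint bound, and Riesz--Thorin interpolation between them.

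For the first stage, I would establish the integrated heat kernel estimate
\[
\int_{\mathcal S_K}|\partial_y p^K_t(x,y)|\,d\nu(y)\le\frac{Ce^{-\lambda_1 t}}{(1\wedge t)^{d_h/d_w}}
\]
following Lemma \ref{lemma int}. This rests on the compact analogue of the co-differential Poincar\'e inequality of Lemma \ref{poinc_div}, which actually \emph{simplifies} in the present setting: since $\mathrm{diam}(K)<\infty$, one may dispense with the cutoff sequence $\{h_n\}$ and test directly against $g(x)=\int_{\gamma(x_0,x)}\eta/(|\eta|+1/n)\,d\nu\in W^{1,\infty}(K)$, obtaining $\int_{\mathcal S_K}|\eta|\,d\nu\le\int_K d(x_0,x)|\partial^*\eta|(x)\,d\mu(x)$ for $\eta\in\mathrm{dom}(\partial^*)$. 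Applying this to $\eta(y)=\partial_y p^K_t(x,y)$ and invoking Lemma \ref{sub_time2_K}, the sub-Gaussian integration of Lemma \ref{lemma int} yields $Ct^{-d_h/d_w}$ for $t\le 1$, while for $t\ge 1$ the uniform spectral bound on $|\Delta_y p^K_t|$ combined with the compactness of $K$ produces $Ce^{-\lambda_1 t}$. Passing to the operator bound on $L^1(K,\mu)$ then follows exactly as in the derivation of \eqref{eq:L1-W1}.

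For the second stage, the pointwise gradient bound on $p^K_t$ proved just above, combined with the reasoning of Corollary \ref{LipP_t} and Remark \ref{remark median}, yields $|\partial P^K_t f(x)|\le\frac{Ce^{-\lambda_1 t}}{(1\wedge t)^{1/d_w}}P^K_{ct}|f|(x)$ for $\nu$ a.e.\ $x$. H\"older's inequality gives $P^K_{ct}|f|(x)\le\|p^K_{ct}(x,\cdot)\|_{L^{p'}(K,\mu)}\|f\|_{L^p(K,\mu)}$, and interpolation of the sub-Gaussian bounds \eqref{eq:sub-Gaussian12} between $L^1$ and $L^\infty$ gives $\|p^K_{ct}(x,\cdot)\|_{L^{p'}}\le C(1\wedge t)^{-d_h/(pd_w)}$, producing
\[
\|\partial P^K_t f\|_{L^\infty(\mathcal S_K,\nu)}\le\frac{Ce^{-\lambda_1 t}}{(1\wedge t)^{1-(1-1/p)d_h/d_w}}\|f\|_{L^p(K,\mu)}.
\]

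Finally, Riesz--Thorin interpolation between these two endpoint bounds preserves the common factor $e^{-\lambda_1 t}$ (as it appears with the same rate in both norms) while linearly interpolating the $(1\wedge t)^{-\cdot}$ exponents, yielding the stated bound after using the identity $d_w-d_h=1$ to simplify the resulting exponent into $(1-\tfrac1p-\tfrac1q)\tfrac1{d_w}+\tfrac1q$. The only genuine technical point is ensuring that the small-time sub-Gaussian regime and the large-time spectral-gap regime stitch together cleanly into a single expression of the form $e^{-\lambda_1 t}/(1\wedge t)^{\cdot}$ with $t$-independent constants; beyond this bookkeeping, every step is a transparent adaptation of the argument given in the unbounded case.
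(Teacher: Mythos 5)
Your proposal is correct and follows exactly the route the paper intends: the paper gives no separate proof of this theorem, stating only that it ``can be proved in an identical way'' to Theorem \ref{Lp bound gt}, and your three-stage argument ($L^1\to L^1$ endpoint via the compact co-differential Poincar\'e inequality and Lemma \ref{sub_time2_K}, $L^p\to L^\infty$ endpoint via the pointwise gradient bound, then Riesz--Thorin) is precisely that adaptation, with the factor $e^{-\lambda_1 t}$ and the $(1\wedge t)$ exponents tracked correctly through the interpolation.
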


As before, the co-differential is defined as the adjoint of the weak gradient $\partial$. More precisely, the operator $\partial^*$ is the densely defined operator from $L^2(\mathcal{S}_K,\nu) \to L^2(K,\mu)$ with domain
\[
\mathrm{dom} \, \partial^*:= \left\{\eta \in L^2(\mathcal{S}_K,\nu) :~\exists f\in L^2(K,\mu),~\text{with } \int_{\mathcal{S}_K}  \eta \partial \phi  d\nu = \int_K f \phi d\mu,  \forall~\phi\in W^{1,2}(K) \right\},
\]
and we have $\partial^* \eta = f$. Observe that we have for $u\in\dom\Delta_K$,  $\partial^*\partial u = -\Delta_K u$. We then define the Hodge Laplacian by $\vec\Delta_K =- \partial\partial^*$ with the obvious domain. In the compact case, the domain of $\partial^*$ and $\vec{\Delta}_K$ can be described using spectral theory as shown in the next lemma.

\begin{lemma}\label{spectre}
We have
\[
\mathrm{dom}\, \partial^* =\left\{ \eta \in L^2(\mathcal{S}_K,\nu): ~\sum_{j=1}^{+\infty} \left( \int_{\mathcal{S}_K} \eta \partial \Phi_j d\nu \right)^2 <+\infty \right\},
\]
and for $\eta \in \mathrm{dom}\, \partial^*$,
\[
\partial^* \eta=\sum_{j=1}^{+\infty}  \left(\int_{\mathcal{S}_K} \eta \partial \Phi_j d\nu\right)   \Phi_j.
\]
Furthermore
\[
\dom\vec\Delta_K =\left\{ \eta \in L^2(\mathcal{S}_K,\nu):~\sum_{j=1}^{+\infty} \lambda_j \left(\int_{\mathcal{S}_K} \eta \partial \Phi_j d\nu\right)^2 < +\infty \right\}
\]
and for every $\eta \in \dom\vec\Delta_K $,
\[
-\vec\Delta_K \eta=\sum_{j=1}^{+\infty}   \left(\int_{\mathcal{S}_K} \eta \partial \Phi_j d\nu\right)  \partial \Phi_j.
\]

\end{lemma}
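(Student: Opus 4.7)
The plan is to use the spectral expansion of $\Delta_K$ to diagonalize $\partial^*$, exploiting the fact that the eigenfunctions $\Phi_j$ (for $j\ge 1$) together with the constant $\Phi_0=1$ form an orthonormal basis of $L^2(K,\mu)$ (the Hausdorff measure satisfies $\mu(K)=1$), and that $\int_K \Phi_j\,d\mu=0$ for $j\ge 1$.

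First, I would record the basic orthogonality relations. The compact-case analogue of Lemma \ref{laplace codiff} gives $\Delta_K u=-\partial^*\partial u$ for $u\in\dom\Delta_K$, so each $\Phi_j$ satisfies $\partial\Phi_j\in\dom\partial^*$ and $\partial^*\partial\Phi_j=\lambda_j\Phi_j$. Consequently
\[
\int_{\mathcal S_K}\partial\Phi_i\,\partial\Phi_j\,d\nu=\mathcal E_K(\Phi_i,\Phi_j)=\lambda_j\delta_{ij},
\]
so $\{\lambda_j^{-1/2}\partial\Phi_j\}_{j\ge1}$ is orthonormal in $L^2(\mathcal S_K,\nu)$. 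Next I would note the standard spectral characterization: $\phi\in W^{1,2}(K)$ iff $\sum_{j\ge1}\lambda_j c_j^2<\infty$, where $c_j=\langle\phi,\Phi_j\rangle$, with $\mathcal E_K(\phi,\phi)=\sum_j\lambda_j c_j^2$; in particular the partial sums $\phi_N=c_0+\sum_{j=1}^N c_j\Phi_j$ converge to $\phi$ in $W^{1,2}(K)$, which yields $\partial\phi=\sum_{j\ge1}c_j\,\partial\Phi_j$ in $L^2(\mathcal S_K,\nu)$.

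For the first claim, let $\eta\in L^2(\mathcal S_K,\nu)$ and set $a_j=\int_{\mathcal S_K}\eta\,\partial\Phi_j\,d\nu$. If $\eta\in\dom\partial^*$, then testing the defining identity against $\phi=\Phi_j$ gives $a_j=\langle\partial^*\eta,\Phi_j\rangle$, while $\phi=1$ gives $\langle\partial^*\eta,1\rangle=0$. Parseval in $L^2(K,\mu)$ then yields both the convergence $\sum_j a_j^2=\|\partial^*\eta\|_{L^2}^2<\infty$ and the expansion $\partial^*\eta=\sum_j a_j\Phi_j$. Conversely, if $\sum_j a_j^2<\infty$, define $f=\sum_j a_j\Phi_j\in L^2(K,\mu)$; for $\phi\in W^{1,2}(K)$ with expansion $\phi=c_0+\sum_{j\ge1}c_j\Phi_j$, the $L^2(\mathcal S_K,\nu)$-convergence of $\partial\phi_N$ to $\partial\phi$ plus Cauchy–Schwarz allow interchange of limit and inner product, giving
\[
\int_{\mathcal S_K}\eta\,\partial\phi\,d\nu=\sum_{j\ge1}c_j a_j=\int_K f\phi\,d\mu,
\]
where the last equality uses $\int_K f\,d\mu=0$. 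Hence $\eta\in\dom\partial^*$ with $\partial^*\eta=f$.

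For the second claim I would simply combine the first part with the spectral description of $W^{1,2}(K)$: $\eta\in\dom\vec\Delta_K$ means $\eta\in\dom\partial^*$ with $\partial^*\eta\in W^{1,2}(K)$, which via the expansion $\partial^*\eta=\sum_j a_j\Phi_j$ is equivalent to $\sum_j\lambda_j a_j^2<\infty$. Applying $\partial$ termwise—justified by the orthogonality $\|\sum_{j=M}^N a_j\partial\Phi_j\|_{L^2(\mathcal S_K,\nu)}^2=\sum_{j=M}^N\lambda_j a_j^2$—then gives
\[
-\vec\Delta_K\eta=\partial\partial^*\eta=\sum_{j\ge1}a_j\,\partial\Phi_j
\]
in $L^2(\mathcal S_K,\nu)$, as claimed. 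The only mildly technical point is the careful handling of the zero eigenvalue: the orthogonality $\int_K\partial^*\eta\,d\mu=\int_{\mathcal S_K}\eta\,\partial 1\,d\nu=0$ ensures the constant component of $\partial^*\eta$ vanishes, which both characterizes the range of $\partial^*$ and makes the second-part convergence criterion independent of $a_0$. Otherwise the argument is a direct Fourier-type computation.
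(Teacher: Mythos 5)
Your proof is correct and follows essentially the same spectral route as the paper: diagonalize everything in the eigenbasis $\{\Phi_j\}$, use the orthogonality $\int_{\mathcal S_K}\partial\Phi_i\,\partial\Phi_j\,d\nu=\lambda_j\delta_{ij}$ together with the expansion $\partial\phi=\sum_j\langle\phi,\Phi_j\rangle\,\partial\Phi_j$, and read off $\dom\partial^*$ and $\dom\vec\Delta_K$ by duality. The only difference is that you supply the details (both inclusions for $\dom\partial^*$, Parseval, and the vanishing of the constant mode of $\partial^*\eta$) that the paper compresses into an ``as a consequence.''
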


\begin{proof}
We observe first that
\begin{align*}
\dom\eng_K &=\left\{ f \in L^2(K,\mu):~\lim_{t \to 0} \left\langle \frac{f -e^{t \Delta_K}f}{t} , f \right\rangle_{L^2(K,\mu)} \text{  exists} \right\} 
\\&=
\left\{ f \in L^2(K,\mu):~\lim_{t \to 0} \frac{1}{t} \sum_{j=1}^{+\infty} (1-e^{- \lambda_j t}) \langle f , \Phi_j \rangle_{L^2(K,\mu)}^2 \text{  exists} \right\} 
\\&=
\left\{ f \in L^2(K,\mu):~\sum_{j=1}^{+\infty} \lambda_j \langle f, \Phi_j \rangle_{L^2(K,\mu)}^2 <+\infty \right\}  ,
\end{align*}
and moreover that for $f \in \dom \partial=\dom\eng_K$, 
\begin{align}\label{bar}
\partial f = \sum_{j=1}^{+\infty}  \langle f ,  \Phi_j \rangle_{L^2(K,\mu)} \partial \Phi_j.
\end{align}
As a consequence,
\[
\dom \partial^* =\left\{ \eta \in L^2(\mathcal{S}_K,\nu):~\sum_{j=1}^{+\infty}  \langle \eta,\partial \Phi_j \rangle_{L^2(\mathcal{S}_K,\nu)}^2 <+\infty \right\},
\]
and for $\eta \in \dom \partial^*$,
\[
\partial^* \eta=\sum_{j=1}^{+\infty}  \langle \eta , \partial \Phi_j \rangle_{L^2(\mathcal{S}_K,\nu)}  \Phi_j.
\]
From the definition of $\vec\Delta_K$, this immediately yields
 \[
\dom\vec\Delta_K =\left\{ \eta \in L^2(\mathcal{S}_K,\nu):~\sum_{j=1}^{+\infty} \lambda_j \langle \eta , \partial \Phi_j \rangle_{L^2(\mathcal{S}_K,\nu)}^2 < +\infty \right\}
\]
and for every $\eta \in \dom\vec\Delta_K $ we have,
\[
-\vec\Delta_K \eta=\sum_{j=1}^{+\infty}  \langle \eta , \partial \Phi_j \rangle_{L^2(\mathcal{S}_K,\nu)} \partial \Phi_j.
\]
\end{proof}

We  denote the Hodge semigroup  $e^{t\vec \Delta_K}$ by $\vec{P}^K_t$. The following result easily follows from Lemma \ref{spectre}.

\begin{theorem}
For $\eta  \in L^2(\mathcal S_K,\nu)$,
\[
\vec{P}^K_t \eta (x) =\int_{\mathcal S_K} \vec{p}_t^{\,\, K} (x,y) \eta (y) d\nu(y),
\]
where
\[
\vec{p}_t^{\,\, K} (x,y) =-\sum_{j=1}^{+\infty} \frac{1}{\lambda_j}e^{-\lambda_j t} \partial \Phi_j(x) \partial \Phi_j(y)
\]
is the Hodge heat kernel.
\end{theorem}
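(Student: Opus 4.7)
The plan is to exploit the explicit spectral description of $\vec{\Delta}_K$ furnished by Lemma \ref{spectre} and reduce the claim to a Mercer-type expansion of the kernel.

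First, I would verify that $\{\partial \Phi_j\}_{j \ge 1}$ is an orthogonal family in $L^2(\mathcal{S}_K,\nu)$ with $\|\partial \Phi_j\|_{L^2(\mathcal{S}_K,\nu)}^2 = \lambda_j$. This is immediate from
\[
\int_{\mathcal{S}_K} \partial \Phi_i \, \partial \Phi_j \, d\nu = \mathcal{E}_K(\Phi_i, \Phi_j) = -\int_K \Phi_i \Delta_K \Phi_j \, d\mu = \lambda_j \delta_{ij}
\]
together with the $L^2(K,\mu)$-orthonormality of the $\Phi_j$'s. Applying $\partial$ to $\partial^* \partial \Phi_j = \lambda_j \Phi_j$ (Lemma \ref{laplace codiff}) then gives $\vec{\Delta}_K \partial \Phi_j = -\lambda_j \partial \Phi_j$, whence $\vec{P}^K_t \partial \Phi_j = e^{-\lambda_j t} \partial \Phi_j$.

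Next, I would show that $\{\partial \Phi_j/\sqrt{\lambda_j}\}_{j\ge 1}$ is a complete orthonormal basis of $L^2(\mathcal{S}_K,\nu)$. Completeness amounts to density of the range of $\partial \colon W^{1,2}(K) \to L^2(\mathcal{S}_K,\nu)$. Adapting the proof of Proposition \ref{dense range} to the compact setting, given $\omega \in L^\infty(\mathcal{S}_K,\nu) \cap L^2(\mathcal{S}_K,\nu)$, the primitive $f(x) = \int_{\gamma(0,x)} \omega\, d\nu$ is H\"older continuous on $K$ (no cutoff is needed thanks to compactness of $K$), belongs to $W^{1,2}(K)$, and satisfies $\partial f = \omega$. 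Density in $L^2(\mathcal{S}_K,\nu)$ then follows since $L^\infty \cap L^2$ is dense in $L^2$.

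Expanding a general $\eta \in L^2(\mathcal{S}_K,\nu)$ in this orthonormal basis and invoking the spectral theorem yields
\[
\vec{P}^K_t \eta = \sum_{j=1}^{\infty} \frac{e^{-\lambda_j t}}{\lambda_j} \langle \eta,\partial \Phi_j\rangle_{L^2(\mathcal{S}_K,\nu)}\, \partial \Phi_j.
\]
To recast this as an integral against a kernel, I would verify that $\sum_j \frac{e^{-\lambda_j t}}{\lambda_j} \partial \Phi_j(x) \partial \Phi_j(y)$ converges in $L^2(\mathcal{S}_K \times \mathcal{S}_K,\nu\otimes\nu)$; its $L^2$-norm squared reduces to $\sum_j e^{-2\lambda_j t}$, which is finite by Weyl-type asymptotics that follow from the on-diagonal bound in \eqref{eq:sub-Gaussian12}. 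Fubini then legitimises interchanging the summation and the integration, producing the stated formula for $\vec{p}^K_t(x,y)$ (the overall sign being dictated by the $-\partial\partial^*$ convention of $\vec\Delta_K$). The main technical obstacle is the density statement: although $K$ is compact, $\nu$ is infinite on $\mathcal{S}_K$, so standard finite-measure Hilbert space reasoning does not apply, and one must rely on the explicit primitive construction supplied by the H\"older bound \eqref{eq:Holder} and the boundedness of $K$.
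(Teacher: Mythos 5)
Your route is the paper's route: the authors prove nothing beyond asserting that the result ``easily follows from Lemma \ref{spectre}'', and what you write out (orthogonality $\langle\partial\Phi_i,\partial\Phi_j\rangle_{L^2(\mathcal S_K,\nu)}=\lambda_j\delta_{ij}$, the eigenrelation $\vec\Delta_K\partial\Phi_j=-\lambda_j\partial\Phi_j$, completeness of $\{\partial\Phi_j/\sqrt{\lambda_j}\}$ via density of the range of $\partial$, and Hilbert--Schmidt convergence of the kernel series using $\sum_j e^{-2\lambda_j t}\le\int_K p^K_{2t}(x,x)\,d\mu(x)<\infty$ from \eqref{eq:sub-Gaussian12}) is exactly the content needed to make that assertion rigorous; in fact you are more careful than the paper, since Lemma \ref{spectre} alone does not rule out a nontrivial $\ker\partial^*$, on which $\vec P^K_t$ acts as the identity and the kernel formula would fail.

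Two caveats. First, your closing parenthesis about the sign is not correct: with the convention $\vec\Delta_K=-\partial\partial^*$ you have $\vec\Delta_K\partial\Phi_j=-\lambda_j\partial\Phi_j$, hence $\vec P^K_t\partial\Phi_j=e^{-\lambda_j t}\partial\Phi_j$, and your own expansion $\vec P^K_t\eta=\sum_j\lambda_j^{-1}e^{-\lambda_j t}\langle\eta,\partial\Phi_j\rangle\,\partial\Phi_j$ produces the kernel $\sum_j\lambda_j^{-1}e^{-\lambda_j t}\,\partial\Phi_j(x)\partial\Phi_j(y)$ \emph{without} the minus sign. No appeal to the convention can reverse this (a minus-signed kernel would make the operator negative on $\overline{\mathrm{ran}\,\partial}$, contradicting positivity of $e^{t\vec\Delta_K}$); the minus sign in the statement is a sign slip, and you should say so rather than absorb it into the convention. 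Second, a minor point: since the paper defines $W^{1,2}(K)$ as restrictions of $W^{1,2}(X)$ functions, the primitive $f(x)=\int_{\gamma(0,x)}\omega\,d\nu$ does require an extension to $X$, so ``no cutoff is needed'' is not quite accurate; the fix is routine (extend $\omega$ by zero, take the primitive on $X$, and multiply by a cutoff such as $h_1$ from Lemma \ref{approxi L4}, which is identically $1$ on $K$), but it should be said.
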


Finally, the following result is  analogous to Theorem \ref{bound hodge2} and can be proved in a similar way.

\begin{theorem}
There exist $c,C>0$ such that for every $t >0$ and $\nu$ a.e. $x,y \in \mathcal S_K$
    \[
 |\vec{p}_t^{\,\, K} (x,y)| \le  \frac{C e^{-\lambda_1 t} }{(1 \wedge t)^{1/d_w}}  \exp\biggl(-c\Bigl(\frac{d(x,y)^{d_{w}}}{t}\Bigr)^{\frac{1}{d_{w}-1}}\biggr).
 \]
\end{theorem}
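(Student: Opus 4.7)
My plan follows that of Theorem \ref{bound hodge2}, split into a short-time and a long-time analysis.

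For $0 < t \le 1$, the argument is a direct transcription of the proof of Theorem \ref{bound hodge2}: the short-time heat kernel estimates \eqref{eq:sub-Gaussian12} match \eqref{eq:sub-Gaussian1}, Lemma \ref{sub_time2_K} matches Lemma \ref{sub_time}, and Theorem \ref{Lp bound gt compact} specializes to Theorem \ref{Lp bound gt} on this range. Consequently the compact-space analogues of Lemma \ref{bound funda fg}, Lemma \ref{cont r3}, and Lemma \ref{lemma grad pt} hold with identical proofs, and the same $L^1 \to L^\infty$ extraction combined with testing against indicator functions of finite-length subsets of $\mathcal{S}_K$ yields
\[
|\vec{p}_t^{\,\,K}(x,y)| \le \frac{C}{t^{1/d_w}} \exp\biggl(-c\Bigl(\frac{d(x,y)^{d_w}}{t}\Bigr)^{\frac{1}{d_w-1}}\biggr), \quad 0 < t \le 1.
\]
On this range $e^{-\lambda_1} \le e^{-\lambda_1 t} \le 1$ and $(1 \wedge t)^{1/d_w} = t^{1/d_w}$, so this coincides with the stated estimate up to absorbing $e^{\lambda_1}$ into the constant.

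For $t \ge 1$, I invoke the spectral expansion of the Hodge kernel from the preceding theorem,
\[
\vec{p}_t^{\,\,K}(x,y) = -\sum_{j=1}^{+\infty} \frac{e^{-\lambda_j t}}{\lambda_j}\, \partial \Phi_j(x)\, \partial \Phi_j(y),
\]
and reduce everything to a uniform $L^\infty$ gradient estimate on the eigenfunctions. Writing $\Phi_j = e\cdot P^K_{1/\lambda_j}\Phi_j$ (valid whenever $\lambda_j \ge 1$, the complementary finite set of indices contributing only a bounded term), applying Theorem \ref{Lp bound gt compact} with $p=q=\infty$ at time $t_j = 1/\lambda_j$, and combining with the eigenfunction supremum bound \eqref{boundEigenfunctio} yields
\[
\|\partial \Phi_j\|_{L^\infty(\mathcal{S}_K,\nu)} \le C\lambda_j^{1/d_w}\|\Phi_j\|_{L^\infty(K,\mu)} \le C\lambda_j^{\frac{d_h+2}{2d_w}}.
\]
Plugging this into the spectral series and simplifying via $d_w = d_h+1$ gives
\[
|\vec{p}_t^{\,\,K}(x,y)| \le C\sum_{j=1}^{+\infty} \lambda_j^{1/d_w} e^{-\lambda_j t}.
\]
For $t \ge 1$ one factors out $e^{-\lambda_1 t}$, using $e^{-\lambda_j t} \le e^{-\lambda_1 t}e^{-(\lambda_j-\lambda_1)}$, and invokes Weyl-type asymptotics for the Vicsek Laplacian to ensure $\sum_j \lambda_j^{1/d_w} e^{-(\lambda_j-\lambda_1)}<+\infty$, so that $|\vec{p}_t^{\,\,K}(x,y)| \le Ce^{-\lambda_1 t}$. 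Since $(1\wedge t)^{1/d_w} = 1$ and the Gaussian factor in the claimed bound is bounded below by a positive constant depending only on $\diam K$ on this range, the stated inequality follows.

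The main obstacle is the eigenfunction gradient estimate with the correct exponent $\frac{d_h+2}{2d_w}$: without this precise power the spectral sum would not collapse to a pure $e^{-\lambda_1 t}$ decay. The optimization $t_j = 1/\lambda_j$ in Theorem \ref{Lp bound gt compact} together with the algebraic identity $d_w = d_h+1$ is what makes the exponents balance out to $\lambda_j^{1/d_w}$ inside the sum. All other ingredients (the short-time pseudo-Poincar\'e machinery, the $L^1\to L^\infty$ extraction of a kernel, and the summability of $\sum_j \lambda_j^\alpha e^{-\lambda_j}$) are either literal replays of the corresponding steps in Theorem \ref{bound hodge2} or standard spectral-asymptotic facts.
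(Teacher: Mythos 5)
Your proposal is correct and takes essentially the approach the paper intends: the paper offers no details here beyond ``analogous to Theorem \ref{bound hodge2}'', your small-time part is exactly that replay (the $e^{-\lambda_1 t}$ factor being free for $t\le 1$), and your large-time spectral argument mirrors the paper's own treatment of large times throughout Section 5 (Lemma \ref{sub_time2_K} and the compact pointwise gradient bound). In particular, your key estimate $\|\partial\Phi_j\|_{L^\infty(\mathcal S_K,\nu)}\le C\lambda_j^{(d_h+2)/2d_w}$, obtained by writing $\Phi_j=e^{\lambda_j s}P^K_s\Phi_j$ with $s=1/\lambda_j$, coincides with the Lipschitz bound $|\Phi_j(x)-\Phi_j(y)|\le C\lambda_j^{(d_h+2)/2d_w}d(x,y)$ that the paper itself derives in proving the compact gradient estimate, and the algebra $\lambda_j^{-1}\lambda_j^{(d_h+2)/d_w}=\lambda_j^{1/d_w}$ via $d_w=d_h+1$ is right, so the exponents check out.
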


\bibliographystyle{abbrv}

\noindent
Fabrice Baudoin: \url{fbaudoin@math.au.dk}\\
Department of Mathematics,
Aarhus University

\

\noindent Li Chen: \url{lchen@math.au.dk}\\
Department of Mathematics,
Aarhus University \& Department of Mathematics, Louisiana State University, Baton Rouge, LA 70803
\end{document}